\numberwithin{equation}{section}
\theoremstyle{plain}
\newtheorem{Proposition}[equation]{Proposition}
\newtheorem{Corollary}[equation]{Corollary}
\newtheorem*{Corollary*}{Corollary}
\newtheorem{Theorem}[equation]{Theorem}
\newtheorem*{Theorem*}{Theorem}
\theoremstyle{definition}
\newtheorem{Definition}[equation]{Definition}
\def\HH{\mathscr{H}}
\def\MM{\mathscr{M}}
\def\FF{\mathscr{F}}
\def\DD{\mathscr{D}}
\def\C{\mathbb{C}}
\def\R{\mathbb{R}}
\def\D{\mathbb{D}}
\def\T{\mathbb{T}}
\def\N{\mathbb{N}}
\def\Z{\mathbb{Z}}
\def\K{\mathcal{K}}
\def\phi{\varphi}
\newcommand{\vp}{\varphi}
\newcommand{\clos}{\operatorname{clos}}
\newcommand{\dist}{\operatorname{dist}}
\renewcommand{\ker}{\operatorname{Ker}}
\renewcommand{\Re}{\operatorname{Re}}
\renewcommand{\Im}{\operatorname{Im}}
\newcommand{\sinc}{\operatorname{sinc}}
\newcommand{\beqa}{\begin{eqnarray*}}
\newcommand{\eeqa}{\end{eqnarray*}}
\newcommand{\lra}{\longrightarrow}
\title{A survey on reverse Carleson measures}
\author[Fricain]{Emmanuel Fricain}
 \address{Laboratoire Paul Painlev\'e, Universit\'e Lille 1, 59 655 Villeneuve d'Ascq C\'edex }
 \email{emmanuel.fricain@math.univ-lille1.fr}
\author[Hartmann]{Andreas Hartmann}
\address{Institut de Math\'ematiques de Bordeaux, Universit\'e Bordeaux 1, 351 cours de la Lib\'eration 33405 Talence C\'edex, France}
\email{Andreas.Hartmann@math.u-bordeaux1.fr}
\author[Ross]{William T. Ross}
	\address{Department of Mathematics and Computer Science, University of Richmond, Richmond, VA 23173, USA}
	\email{wross@richmond.edu}
\keywords{Hardy spaces, model spaces, Carleson measures, de Branges-Rovnyak spaces}
\subjclass[2010]{30J05, 30H10, 46E22}
\begin{document}

\begin{abstract}
This is a survey on reverse Carleson measures for various Hilbert spaces of analytic functions. These spaces include the Hardy, Bergman, certain harmonically weighted Dirichlet, Paley-Wiener, Fock, model (backward shift invariant), and de Branges-Rovnyak spaces. The reverse Carleson measure for backward shift invariant subspaces in the non-Hilbert situation is new.

\end{abstract}

\maketitle

\section{Introduction}
Suppose that $\mathscr{H}$ is a Hilbert space of analytic functions on the open unit disk $\D = \{z \in \C: |z| < 1\}$ endowed with a norm $\|\cdot\|_{\mathscr{H}}$. 
If $\mu \in M_{+}(\D^{-})$, the positive finite Borel measures on the closed unit disk $\D^{-} = \{z \in \C: |z| \leqslant 1\}$, we say that $\mu$ is a {\em Carleson measure} for $\mathscr{H}$ when
\begin{equation}\label{carldirect}
\|f\|_{\mu} \lesssim \|f\|_{\mathscr{H}} \quad \forall f \in \mathscr{H},
\end{equation} 
and a {\em reverse Carleson measure} for $\mathscr{H}$ when 
\begin{equation}\label{carlreverse}
 \|f\|_{\mathscr{H}} \lesssim \|f\|_{\mu} \quad \forall f\in \mathscr{H}.
\end{equation}
Here we use the notation
$$\|f\|_{\mu} := \left(\int_{\D^{-}} |f|^2 d\mu\right)^{\tfrac{1}{2}}$$ for the $L^2(\mu)$ norm of $f$ and the notation $\|f\|_{\mu} \lesssim \|f\|_{\mathscr{H}}$ to mean there is a constant $c_{\mu} > 0$ such that $\|f\|_{\mu} \leqslant c_{\mu} \|f\|_{\mathscr{H}}$ for every $f \in \mathscr{H}$ (similarly for the inequality  $\|f\|_{\mathscr{H}} \lesssim \|f\|_{\mu}$). We will use the notation 
$\|f\|_{\mu} \asymp \|f\|_{\mathscr{H}}$ when $\mu$ is both a Carleson and a reverse Carleson measure. There is of course the issue of how we define $f$ $\mu$-a.e. on $\T=\partial\D$ so that $\|f\|_\mu$ makes sense; but this will be discussed later. 

Carleson measures for many Hilbert (and Banach) spaces of analytic functions have been well studied for many years now. Due to the large literature on this subject,  it is probably impossible to give a complete account of these results. Carleson measures make, and continue to make, important connections to many areas of analysis such as operator theory, 
interpolation, boundary behavior problems, and Bernstein inequalities and they have certainly proved their worth. We will mention a few of these results as they relate to the lesser known topic, and the focus of this survey, of reverse Carleson measures.

Generally speaking, Carleson measures $\mu$ are often characterized by the amount of mass that $\mu$ places
on a {\em Carleson window} 
$$S_I :=\big\{z \in \D^{-}:1-|I|\leqslant |z|\leqslant 1, \frac{z}{|z|} \in I\big\}$$ 
relative to the length $|I|$ of the side $I$ of that window,
i.e., whether or not there exists positive constants $C$ and $\alpha$ such that 
\begin{eqnarray}\label{carlcond}
 \mu(S_I)\leqslant C |I|^{\alpha}.
\end{eqnarray}
for all
arcs $I \subset \T = \partial \D$. We will write this as $\mu(S_I) \lesssim |I|^{\alpha}$.

When $\mathscr{H}$ is a reproducing kernel Hilbert space, it is often the case that the Carleson condition in \eqref{carldirect} can be equivalently rephrased in terms of the, seemingly weaker, testing condition 
\begin{equation}\label{RKThesis-Intro}
\|k_{\lambda}^{\mathscr{H}}\|_{\mu} \lesssim \|k_{\lambda}^{\mathscr{H}}\|_{\mathscr{H}} \quad \forall \lambda \in \D,
\end{equation}
where $k_{\lambda}^{\mathscr{H}}$ is the reproducing kernel function for $\mathscr{H}$. This testing condition (where \eqref{RKThesis-Intro} implies \eqref{carldirect})  is often called the {\em reproducing kernel thesis} (RKT). 

It is natural to ask as to whether or not reverse Carleson measures on $\mathscr{H}$ can be characterized by replacing the conditions in \eqref{carlcond} and \eqref{RKThesis-Intro} with the analogous ``reverse'' conditions 
$$ \mu(S_I)\gtrsim |I|^{\alpha} \quad \mbox{or} \quad \|k_{\lambda}^{\mathscr{H}}\|_{\mu} \gtrsim \|k_{\lambda}^{\mathscr{H}}\|_{\mathscr{H}}.$$ We will explore when this happens. 

Reverse Carleson measures probably first appeared under the broad heading of  ``sampling measures'' for $\mathscr{H}$, in other words, measures $\mu$ for which 
$$\|f\|_{\mathscr{H}} \asymp \|f\|_{\mu} \quad \forall f \in \mathscr{H},$$
i.e., $\mu$ is both a Carleson {\em and} a reverse Carleson measure for $\mathscr{H}$. When $\mu$ is a discrete measure associated to a sequence of atoms in $\D$,  this sequence is often called a ``sampling sequence'' for $\mathscr{H}$ and there is a large literature on this subject \cite{Seip04}. Equivalent measures have also appeared in the context of ``dominating sets''. For example, it is often the case that $\mathscr{H}$ is naturally normed by an $L^2(\mu)$ norm, i.e., 
$$\|f\|_{\mathscr{H}} = \|f\|_{\mu} \quad \forall f \in \mathscr{H},$$ as is the case with the Hardy, Bergman, Paley-Wiener, Fock, and model spaces. For a Borel set $E$ contained in the support of $\mu$, one can ask whether or not the measure $\mu_{E} = \mu|_{E}$ satisfies 
\begin{equation}\label{784504rdsf}
\|f\|_{\mathscr{H}} \asymp \|f\|_{\mu_E} \quad \forall f \in \mathscr{H}.
\end{equation}
Such sets $E$ are called ``dominating sets" for $\mathscr{H}$. Historically, for the Bergman, Fock, and Paley-Wiener spaces, the first examples of reverse Carleson measures were obtained via dominating sets which, in these spaces, are naturally related with relative density, meaning that $E$ is never too far from the set on which the norm of the space is evaluated.

Though we will give a survey of reverse Carleson measures considered on a variety of Hilbert spaces, our main effort, and efforts of much recent work, will be on the sub-Hardy Hilbert spaces such as the model spaces and their de Branges-Rovnyak space generalizations. We will also comment on certain Banach space generalizations when appropriate, and in particular in connection with backward shift invariant subspaces. As it turns out the corresponding result from \cite{BFGHR} generalizes to $1<p<+\infty$. Indeed, this novel result follows from Baranov's proof as presented in \cite{BFGHR} and which we will reproduce in a separate appendix with the necessary modifications.

\section{The Hardy space} We assume the reader is familiar with the classical {\em Hardy space} $H^2$. For those needing a review, three excellent and well-known sources are \cite{Duren, Garnett, Koosis}. Functions in $H^2$ have radial boundary values almost everywhere on $\T$ and $H^2$ can be regarded as a closed subspace of $L^2$ via the ``vanishing negative Fourier coefficients" criterion. If $m$ is standard Lebesgue measure on $\T$, normalized so that $m(\T) = 1$, then $H^2$ is normed by the $L^2(m)$ norm $\|\cdot\|_{m}$. As expected, the subject of Carleson measures begins with this well-known theorem of Carleson \cite[Chap. I, Thm. 5.6]{Garnett}. 

\begin{Theorem}[Carleson]
For $\mu \in M_{+}(\D)$ the following are equivalent: 
\begin{enumerate}
\item[(i)] $\|f\|_{\mu} \lesssim \|f\|_{m}$ for all $f \in H^2$; 
\item[(ii)] $\|k_{\lambda}\|_{\mu} \lesssim \|k_{\lambda}\|_{m}$ for all $\lambda \in \D$, where $k_{\lambda}(z) = (1 - \overline{\lambda} z)^{-1}$ is the reproducing kernel for $H^2$; 
\item[(iii)] $\mu(S_{I}) \lesssim |I|$ for all arcs $I \subset \T$. 
\end{enumerate}
\end{Theorem}

This theorem can be generalized in a number of ways. First, the theorem works for the $H^p$ classes for $p \in (0, \infty)$ (with nearly the same proof). In particular, the set of Carleson measures for $H^p$ does not depend on $p$. Furthermore, notice that the original hypothesis of the theorem says that $\mu \in M_{+}(\D)$ and thus places no mass on $\T$. Since $H^2 \cap C(\D^{-})$ is dense in $H^2$ (finite linear combinations of reproducing kernels belong to this set), one can replace the condition $\|f\|_{\mu} \lesssim \|f\|_{m}$ for all $f \in H^2$ with the same inequality but with $H^2$ replaced with $H^2 \cap C(\D^{-})$. This enables an extension of Carleson's theorem to measures $\mu$ which could possibly place mass on $\T$ where the functions in $H^2$ are not initially defined. In the end however, this all sorts itself out since the Carleson window condition $\mu(S_{I}) \lesssim |I|$ implies that $\mu|_{\T} \ll m$ and so the integral in $\|f\|_{\mu}$ makes sense when one defines $H^2$ functions on $\T$ by their $m$-almost everywhere defined radial limits. Stating this all precisely, we obtain a revised Carleson theorem. 

\begin{Theorem}\label{Carleson-revised}
Suppose $\mu \in M_{+}(\D^{-})$. Then the following are equivalent: 
\begin{enumerate}
\item[(i)] $\|f\|_{\mu} \lesssim \|f\|_{m}$ for all $f \in H^2 \cap C(\D^{-})$; 
\item[(ii)] $\|k_{\lambda}\|_{\mu} \lesssim \|k_{\lambda}\|_{m}$ for all $\lambda \in \D$; 
\item[(iii)] $\mu(S_{I}) \lesssim |I|$ for all arcs $I \subset \T$. 
\end{enumerate}
Furthermore, when any of the above equivalent conditions hold, then $\mu|_{\T} \ll m$; the Radon-Nikodym derivative  $d \mu|_{\T}/dm$ is bounded; and $\|f\|_{\mu} \lesssim \|f\|_{m}$ for all $f \in H^2$. 
\end{Theorem}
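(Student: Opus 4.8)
The plan is to reduce everything to the classical Carleson theorem stated above by splitting $\mu$ into its restriction $\mu|_{\D}$ to the open disk and its restriction $\mu|_{\T}$ to the boundary, and to prove the three conditions equivalent as a cycle $(i) \Rightarrow (ii) \Rightarrow (iii) \Rightarrow (i)$. The implication $(i) \Rightarrow (ii)$ is immediate: for $\lambda \in \D$ the kernel $k_{\lambda}(z) = (1-\overline{\lambda}z)^{-1}$ belongs to $H^2 \cap C(\D^{-})$, so $(i)$ applied to $f = k_{\lambda}$ yields $(ii)$.

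For $(ii) \Rightarrow (iii)$ I would run the usual reproducing kernel computation. Given an arc $I \subset \T$ with midpoint $\zeta$, set $\lambda = (1-|I|)\zeta$, so that $1 - |\lambda| = |I|$. One checks the standard estimate $|k_{\lambda}(z)| \asymp |I|^{-1}$ uniformly for $z \in S_I$, whence
\[
|I|^{-2}\,\mu(S_I) \lesssim \int_{S_I} |k_{\lambda}|^2\, d\mu \leqslant \|k_{\lambda}\|_{\mu}^2 \lesssim \|k_{\lambda}\|_{m}^2 = (1-|\lambda|^2)^{-1} \asymp |I|^{-1},
\]
and rearranging gives $\mu(S_I) \lesssim |I|$.

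The substantive step is $(iii) \Rightarrow (i)$, and it is here that the ``furthermore'' assertions also fall out. First observe that since the window $S_I$ contains the boundary arc $I$ itself, condition $(iii)$ forces $\mu|_{\T}(I) \leqslant \mu(S_I) \lesssim |I|$ for every arc. A standard Lebesgue differentiation (or Hardy--Littlewood maximal function) argument then shows the symmetric derivative of $\mu|_{\T}$ is bounded, so that $\mu|_{\T} \ll m$ with Radon--Nikodym derivative $h := d\mu|_{\T}/dm \in L^{\infty}(\T)$. In particular the $m$-a.e.\ defined radial limits of an $H^2$ function are also defined $\mu|_{\T}$-a.e., which settles the measurability issue flagged in the introduction. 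Now split $\mu = \mu|_{\D} + \mu|_{\T}$. The restriction $\mu|_{\D} \in M_{+}(\D)$ inherits $\mu|_{\D}(S_I) \lesssim |I|$, so the classical Carleson theorem gives $\|f\|_{\mu|_{\D}} \lesssim \|f\|_{m}$ for \emph{all} $f \in H^2$; for the boundary part, $\|f\|_{\mu|_{\T}}^2 = \int_{\T} |f|^2 h\, dm \leqslant \|h\|_{\infty}\|f\|_{m}^2$. Adding the two estimates yields $\|f\|_{\mu} \lesssim \|f\|_{m}$ for every $f \in H^2$, which is stronger than $(i)$ and simultaneously delivers the remaining conclusions, since whenever any of $(i)$--$(iii)$ holds so does $(iii)$.

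I expect the only genuine obstacle to be the measure-theoretic passage from the arc bound $\mu|_{\T}(I) \lesssim |I|$ to absolute continuity with bounded density; every other ingredient is either trivial, a routine reproducing-kernel estimate, or a direct appeal to the classical theorem. This passage is precisely what makes $\|f\|_{\mu}$ meaningful for boundary-touching $\mu$ and what lets us upgrade $(i)$ from $H^2 \cap C(\D^{-})$ to all of $H^2$.
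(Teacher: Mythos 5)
Your proposal is correct and follows essentially the same route the paper sketches (the paper states this theorem without a detailed proof, only indicating that one reduces to the classical Carleson theorem after observing that $\mu(S_I)\lesssim|I|$ forces $\mu|_{\T}\ll m$ with bounded density). Your write-up simply fills in the standard details of that outline: the kernel test for (ii)$\Rightarrow$(iii), the differentiation argument for the boundary part, and the splitting $\mu=\mu|_{\D}+\mu|_{\T}$.
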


We took some time to chase down this technical detail since, for other Hilbert spaces, we need to include the possibility that $\mu$ might place mass on the unit circle $\T$ and perhaps even have a non-trivial singular component (with respect to $m$). In fact, as we will see below when one discusses the works of Aleksandrov and Clark, there are Carleson measures, in fact isometric measures, for model spaces which are singular with respect to $m$. 

The reverse Carleson measure theorem for $H^2$ is the following \cite{HMNOC}. We include the proof since some of the ideas can be used to obtain a reverse Carleson measure for other sub-Hardy Hilbert spaces such as the model or de Branges-Rovnyak spaces (see Section \ref{Sec:Hbspaces}).

\begin{Theorem}\label{thmHMNO}
Let $\mu \in M_{+}(\D^{-})$.
Then the following assertions are equivalent:
\begin{enumerate}
\item[(i)] $\|f\|_{\mu} \gtrsim \|f\|_{m}$ for all $f \in H^2 \cap C(\D^{-})$; 
\item[(ii)] $\|k_{\lambda}\|_{\mu} \gtrsim \|k_{\lambda}\|_{m}$ for all $\lambda \in \D$; 
\item[(iii)] $\mu(S_{I}) \gtrsim |I|$ for every arc $I \subset \T$;
\item[(iv)] $\mbox{ess-inf  } d \mu|_{\T}/d m > 0$. 
\end{enumerate}
\end{Theorem}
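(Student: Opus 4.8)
The plan is to establish the cyclic chain of implications $(iv)\Rightarrow(iii)\Rightarrow(ii)\Rightarrow(i)\Rightarrow(iv)$, since several of these steps mirror the structure already used in the direct Carleson theorem. The conceptually cleanest implication is $(iv)\Rightarrow(i)$: if $\operatorname{ess-inf} d\mu|_{\T}/dm =: \delta > 0$, then $\mu|_{\T} \geqslant \delta\, m$ as measures on $\T$, and hence for every $f \in H^2\cap C(\D^{-})$ one has
\[
\|f\|_{\mu}^2 = \int_{\D^{-}} |f|^2\, d\mu \geqslant \int_{\T} |f|^2\, d\mu|_{\T} \geqslant \delta \int_{\T} |f|^2\, dm = \delta\,\|f\|_m^2,
\]
where the boundary values are the $m$-a.e.\ radial limits (these exist for $f\in H^2$, and the continuity on $\D^{-}$ guarantees they agree with the genuine boundary values). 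This gives $(i)$ with constant $\sqrt{\delta}$. The implication $(i)\Rightarrow(ii)$ is trivial since the normalized reproducing kernels $k_\lambda$ belong to $H^2\cap C(\D^{-})$.

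Next I would handle $(ii)\Rightarrow(iii)$. Using $k_\lambda(z)=(1-\overline{\lambda}z)^{-1}$ and $\|k_\lambda\|_m^2 = (1-|\lambda|^2)^{-1}$, one computes that for $\lambda = (1-|I|)\zeta$ with $\zeta$ the center of the arc $I$, the normalized kernel $k_\lambda/\|k_\lambda\|_m$ has mass concentrated on the Carleson window $S_I$: off the window $|k_\lambda(z)|$ is comparably small. Testing the hypothesis $(ii)$ against this $\lambda$ and estimating $|k_\lambda(z)|^2 \gtrsim (1-|\lambda|^2)^{-1}\asymp |I|^{-1}$ for $z\in S_I$ from below, while controlling the integral over the complement of $S_I$ via the standard decomposition of $\D^{-}$ into dyadic sub-windows, should yield $\mu(S_I)\gtrsim |I|$. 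The only delicate point is absorbing the tail contribution, but since $(iii)$ is the conclusion we are free to use the direct Carleson bound on $\mu$ if needed, or simply the pointwise decay of $|k_\lambda|$ away from $\zeta$.

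The implication I expect to be the genuine obstacle is $(iii)\Rightarrow(iv)$, which must extract a \emph{pointwise} lower bound on the Radon--Nikodym derivative from the \emph{averaged} window condition $\mu(S_I)\gtrsim |I|$. The natural route is a differentiation argument: first one shows that $(iii)$ together with the accompanying information forces $\mu|_{\T}\ll m$ with a derivative bounded below. The subtlety is that $\mu(S_I)$ sees not only $\mu|_{\T}$ but also the interior mass of $\mu$ inside the window; one must argue that the interior contribution cannot be responsible for the lower bound, so that the boundary part alone satisfies $\mu|_{\T}(I)\gtrsim |I|$ for all arcs, and then invoke the Lebesgue differentiation theorem to conclude $d\mu|_{\T}/dm \geqslant c > 0$ almost everywhere. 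I would prove this by shrinking the window: as $|I|\to 0$ with a fixed point $\zeta\in\T$ a Lebesgue point, the relative interior mass $\mu(S_I\cap\D)/|I|$ can be shown to be controlled, isolating the boundary average $\mu|_{\T}(I)/|I|$ and forcing it to stay bounded below. Establishing rigorously that the interior mass does not inflate the count is the heart of the matter, and is where I would spend most of the effort.
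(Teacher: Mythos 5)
Your cycle of implications matches the paper's, and $(iv)\Rightarrow(i)$ and $(i)\Rightarrow(ii)$ are fine. But both substantive steps have genuine gaps. For $(ii)\Rightarrow(iii)$, your plan to test against the single point $\lambda=(1-|I|)\zeta$ and then ``absorb the tail'' breaks down precisely where you wave at it: to deduce $\int_{S_I}|k_\lambda|^2\,d\mu\gtrsim |I|^{-1}$ from $\int_{\D^-}|k_\lambda|^2\,d\mu\gtrsim|I|^{-1}$ you must bound $\int_{\D^-\setminus S_I}|k_\lambda|^2\,d\mu$, and the pointwise decay of $|k_\lambda|$ alone cannot do this because nothing controls how much mass $\mu$ puts on the dyadic annuli $S_{2^{k+1}I}\setminus S_{2^kI}$. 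You are \emph{not} free to invoke the direct Carleson bound: the theorem assumes only $\mu\in M_+(\D^-)$, and the paper stresses that the whole improvement over the earlier version in \cite{Queffelec} is exactly that no a priori Carleson hypothesis is imposed. The paper's mechanism is different in kind: it integrates the hypothesis $\|K_\lambda\|_\mu^2\geqslant B$ over $\lambda$ in a thin slab $S_{I,h}$ against area measure, and the resulting function $\varphi_h(z)=h^{-1}\int_{S_{I,h}}\frac{1-|\lambda|^2}{|1-\overline{\lambda}z|^2}\,dA(\lambda)$ is uniformly bounded in $z$ and $h$ (because Poisson kernels have unit mass on $\T$) and converges pointwise to essentially $\chi_I$; dominated convergence then gives $\mu(I^-)\gtrsim|I|$ with no tail estimate on $\mu$ at all. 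Averaging over many $\lambda$, rather than testing one, is the idea you are missing.

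For $(iii)\Rightarrow(iv)$ you correctly isolate the difficulty (the window condition sees interior mass as well as boundary mass) but your proposed resolution --- fix a Lebesgue point, shrink $|I|$, and argue that $\mu(S_I\cap\D)/|I|$ is ``controlled'' --- is not carried out and is not obviously true pointwise: a measure like $\sum_n 2^{-n}\delta_{(1-2^{-n})\zeta}$ has $\mu(S_I\cap\D)/|I|\asymp 1$ along a whole sequence of shrinking arcs at $\zeta$, so the interior contribution does not simply vanish in the limit. The key idea in the paper is a subdivision trick that keeps the \emph{arc fixed} and shrinks the \emph{height}: for $h=|I|/N$ the thin slab $S_{I,h}$ is the disjoint union of the $N$ genuine Carleson windows $S_{I_k}$ over the subarcs $I_k$ of length $h$, so hypothesis $(iii)$ gives $\mu(S_{I,h})\geqslant C\sum_k|I_k|=C|I|$ uniformly in $h$; since every relatively open $O\supset I$ contains some $S_{I,h}$, outer regularity yields $\mu(I^-)\geqslant C|I|$, hence $\mu|_{\T}\geqslant Cm$ and the essential infimum bound follows. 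Without this step your argument does not rule out that the lower bound in $(iii)$ is carried entirely by mass inside $\D$.
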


\begin{proof}
(i) $\Rightarrow$ (ii) is clear.

(iii) $\Rightarrow$ (iv): Define
$$C = \inf_{I} \frac{\mu(S_{I})}{|I|}.$$ 
Let $I$ be an arc on $\T$ and take any (relatively) open set $O$ in $\D^{-}$  for which $I\subset O$. Then there exists an integer $N$ such that $h=|I|/N$ satisfies $S_{I,h}\subset O$ where $S_{I,h}$ is the modified Carleson window defined by 
\[
 S_{I,h}=\big\{z\in \D^{-} : 1-h\leqslant |z|\leqslant 1, \frac{z}{|z|} \in I\big\}\ .
\]
Divide $I$ into $N$ sub-arcs $I_k$ (suitable half-open except for the last one) such that $|I_k|=h$ (and hence
$S_{I_k,h}=S_{I_k}$). Then
\[
 \mu(S_{I,h})=\mu(\bigcup_{k=1}^N S_{I_k,h})=
\sum_{k=1}^N \mu(S_{I_k,h})\geqslant C \sum_{k=1}^N |I_k| = C |I|.
\]
For every (relatively) open set $O$
in $\D^{-}$  for which $I\subset O$ there exists $h>0$ such that
$S_{I,h}\subset O$. Since $\mu\in M_+(\D^-)$ is outer regular (see
\cite[Theorem 2.18]{Ru}) we have
\[
 \mu(I)=\inf\{\mu(O): I\subset O \text{ open in  } \D^{-}\} \geqslant \inf_{h>0}\mu(S_{I,h})
 \geqslant C |I|.
\]
We deduce that $m$ is absolutely
continuous with respect to $\mu|_{\T}$ and the
corresponding Radon-Nikodym derivative of $\mu$ is (essentially) bounded below by
$C$.

(iv) $\Rightarrow$ (i): Let 
$$A = \mbox{ess-inf  } d \mu|_{\T}/d m.$$ For all $f\in H^2\cap C(\D^{-})$,
\[
 \int_{\D^{-}}|f|^2 d\mu\geqslant \int_{\T}|f|^2 d\mu \geqslant A \int_{\T}|f|^2 dm.
\]

(ii) $\Rightarrow$ (iii): Let 
\begin{equation}\label{weiuhrwer}
K_{\lambda}(z) = \frac{k_{\lambda}(z)}{\|k_{\lambda}\|_{m}}
\end{equation} be the normalized reproducing kernel for $H^2$ and observe that since
$$\|k_{\lambda}\|_{m} = \frac{1}{\sqrt{1 - |\lambda|^2}},$$
the quantity 
$$|K_{\lambda}(z)|^2 = \frac{1 - |\lambda|^2}{|1- \overline{\lambda}z|^2}$$ is the Poisson kernel for the disk. Let 
$$B = \inf_{\lambda \in \D} \|K_{\lambda}\|_{\mu}^{2}$$ and note that $B > 0$ by hypothesis. 

Integrating over $S_{I,h}$ with respect to  area measure $dA$ on $\D$ we get
\begin{equation}\label{ineq}
 B  |I|\times h \leqslant \int_{S_{I,h}}\int_{\D^{-}} |K_{\lambda}|^2d\mu\; dA(\lambda)
  = \int_{\D^{-}} \int_{S_{I,h}}  \frac{1-|\lambda|^2}
{|1-\overline{\lambda}z|^2} dA(\lambda) d\mu(z).
\end{equation}

Set
\[
 \varphi_h(z)=\frac{1}{h}\int_{S_{I,h}}
 \frac{1-|\lambda|^2}{|1-\overline{\lambda}z|^2}dA(\lambda).
\]

We claim that
\[
 \lim_{h\to 0}\varphi_h(z)
 =
 \begin{cases} 
 1 &\mbox{if } z \in I^{\circ} \\ 
\tfrac{1}{2} & \mbox{if } z \in \partial I\\
0 & \text{if } z\in\D^{-}\setminus I^{-},
  \end{cases} 
\]
where $I^{-}$ denotes the closure, $I^{\circ}$ the interior, and $\partial I$ the boundary of the arc $I$.  Indeed, when $z\notin I^{-}$,  there are constants $\delta,h_0 >0$
such that for every $h \in (0, h_0)$ and for every $\lambda\in S_{I,h}$, we
have $|1-\overline{\lambda}z|\geqslant\delta>0$. The result now follows
from the estimate
\[
 0\leqslant \vp_h(z)=\frac{1}{h}\int_{S_{I,h}}
 \frac{1-|\lambda|^2}{|1-\overline{\lambda}z|^2}dA(\lambda)
 \leqslant \frac{1}{\delta^2}\frac{|I|\times h}{h}\times (2h)\lesssim h.
\]
When $z=e^{i\theta_0}\in I^{\circ}$,
then setting $\lambda=r e^{i\theta}$ for $\lambda\in S_{I,h}$
we have
\beqa
 \varphi_h(z)=\frac{1}{h}\int_{S_{I,h}}
 \frac{1-|\lambda|^2}{|1-\overline{\lambda}z|}dA(\lambda)
=\frac{1}{h}\int_{1-h}^1\int_I\frac{1-r^2}{|1-re^{-i\theta}z|^2}d\theta rdr.
\eeqa
Since $\mbox{dist}(z, \T \setminus I^{\circ}) > 0$ we see that when $r\to 1$ we have, via Poisson integrals, 
\[
\int_I\frac{1-r^2}{|1-re^{-i\theta}z|^2}d\theta
 =1-\int_{\T\setminus I}\frac{1-r^2}{|1-re^{-i\theta}z|^2}d\theta\to 1.
\]
Similarly, if can be shown that at the endpoints of $I$, $\varphi_{h}$ converges to $\tfrac{1}{2}$.
Hence $\varphi_h$ converges pointwise to a function comparable
to $\chi_{{I}}$, and $\varphi_h$ is uniformly bounded in
$h$. From \eqref{ineq} and the dominated convergence theorem we finally
deduce that
\[
 \mu({I})=\int_{\D^-} \chi_{{I}} d\mu \simeq \int_{\D^{-}}
 \lim_{h\to 0}\varphi_h(z)d\mu(z)=
 \lim_{h\to 0}\int_{\D^{-}}\varphi_h(z)d\mu(z)
 \gtrsim |I|\ . \qedhere
\]
\end{proof}

This theorem was proved in \cite{HMNOC} and extends to $1 < p < \infty$ with the same proof. There is a somewhat weaker version of this result in \cite{Queffelec}, appearing in the context of composition operators on $H^2$ with closed range, where the authors needed to assume from the onset that $\mu$ was a Carleson measure for $H^2$. Observe that in this theorem we
do not require absolute continuity
of the restriction $\mu|_{\T}$.
However, if we want to extend $\|f\|_{\mu} \gtrsim \|f\|_{m}$, originally assumed for $f \in H^2 \cap C(\D^{-})$, to all of $H^2$, then, in order for the integral in $\|f\|_{\mu}$ to make sense for every function in
$H^2$ (via radial boundary values), we need to impose the condition $\mu|_{\T} \ll m$.
Note that we are allowing the possibility that the integral $\|f\|_{\mu}$ be infinite for certain $f\in H^2$ when the Radon-Nikodym derivative
of $\mu|_{\T}$ is unbounded.

When $\mu \in M_{+}(\D^{-})$ one can combine Theorem \ref{Carleson-revised} and Theorem \ref{thmHMNO} to see that 
$$\|f\|_{\mu} \asymp \|f\|_{m} \; \; \forall f \in H^2 \iff \|k_{\lambda}\|_{\mu} \asymp \|k_{\lambda}\|_{m} \; \; \forall \lambda \in \D \iff \mu(S_{I}) \asymp |I| \; \; \forall I \subset \T.$$ One might ask what are the ``isometric measures'' for $H^2$, i.e., 
$\|f\|_{\mu} = \|f\|_{m}$ for all $f \in H^2$. Notice how this is a significantly stronger condition than $\|f\|_{m} \asymp \|f\|_{\mu}$. As it turns out, there is only one such isometric measure. 

\begin{Proposition}
Suppose $\mu \in M_{+}(\D^{-})$ and $\|f\|_{\mu} = \|f\|_{m}$ for all $f \in H^2 \cap C(\D^{-})$. Then $\mu = m$. 
\end{Proposition}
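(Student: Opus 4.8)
The plan is to exploit the isometry $\|f\|_\mu = \|f\|_m$ by testing it against the rich supply of polynomials, all of which lie in $H^2 \cap C(\D^{-})$. First I would note that an isometric measure is in particular a Carleson measure for $H^2$, since $\|f\|_\mu \lesssim \|f\|_m$ holds (with constant $1$). Hence Theorem \ref{Carleson-revised} applies and gives $\mu|_{\T} \ll m$. Writing $\mu = \mu|_{\D} + \mu|_{\T}$ with $\mu|_{\T} = w\,dm$ for some $w \geqslant 0$ in $L^1(m)$, the total mass of $\mu$ is split between an open-disk part and an absolutely continuous boundary part.

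Next I would locate all the mass on $\T$. Testing the identity against the monomials $f = z^n$, $n \geqslant 0$ (each a polynomial, hence in $H^2 \cap C(\D^{-})$, with $|z^n|^2 = |z|^{2n}$ and $|z^n| = 1$ on $\T$), yields
\[
 \int_{\D} |z|^{2n}\,d\mu|_{\D} + \mu|_{\T}(\T) = \int_{\T} 1\,dm = 1, \qquad n \geqslant 0.
\]
Letting $n \to \infty$ and invoking dominated convergence (the integrand is bounded by $1$ and tends to $0$ pointwise on $\D$, while $\mu$ is finite) forces $\int_{\D} |z|^{2n}\,d\mu|_{\D} \to 0$, so $\mu|_{\T}(\T) = 1$. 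Taking $n = 0$ then gives $\mu|_{\D}(\D) = 0$, i.e.\ $\mu = w\,dm$ is carried entirely by $\T$.

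Finally I would pin down the density. With $\mu = w\,dm$ the isometry becomes $\int_{\T} |f|^2 (w-1)\,dm = 0$ for every polynomial $f$. Writing $g = w - 1 \in L^1(m)$, a real-valued function, I would first take $f \equiv 1$ to get $\hat{g}(0) = 0$, and then test against $f = 1 + \lambda z^n$ for $\lambda \in \{1, i\}$ and $n \geqslant 1$; expanding $|1 + \lambda z^n|^2 = 1 + |\lambda|^2 + \lambda e^{in\theta} + \overline{\lambda}\, e^{-in\theta}$ on $\T$ and using $\hat{g}(0)=0$ yields $\hat{g}(n) = \hat{g}(-n) = 0$ for all $n$. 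Equivalently, the polarizations of $|f|^2$ span all trigonometric polynomials, so the functional $h \mapsto \int_{\T} h\, g\,dm$ annihilates a dense subspace of $C(\T)$; hence $g = 0$, that is, $w \equiv 1$ and $\mu = m$.

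I expect the only genuinely delicate point to be the reduction to $\mu|_{\T} \ll m$, which is exactly where the machinery of Theorem \ref{Carleson-revised} is needed so that the boundary part of $\mu$ can be written as $w\,dm$; once absolute continuity is secured, the monomial mass computation and the Fourier-coefficient argument are both routine.
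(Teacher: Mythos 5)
Your proof is correct and follows essentially the same route as the paper's: testing against the monomials $z^n$ forces all of the mass onto $\T$, Carleson's theorem gives absolute continuity of $\mu|_{\T}$, and a Fourier-coefficient argument pins the density down to $1$. The only difference is cosmetic: the paper extracts the Fourier coefficients by testing the isometry against the normalized reproducing kernels $K_\lambda$ (i.e., Poisson kernels) and expanding in $\lambda$, whereas you polarize with $1+\lambda z^n$; the two are routine variants of the same final step.
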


\begin{proof}
Indeed for each $n \in \N \cup \{0\}$ we have
$$1 = \|z^{n}\|^2_{m} = \int_{\D} |z|^{2 n} d \mu + \mu(\T).$$ 
Clearly, letting $n \to \infty$, we get $\mu(\T) = 1$. 
When $n = 0$ 
this yields $$\mu(\D) = 0 \quad \mbox{and} \quad \mu = \mu|\T.$$ 
By Carleson's criterion we see that $\mu \ll m$ and so $d \mu = h dm$, for some $h\in L^1(m)$. 
To conclude that $h$ is equal to one almost everywhere, apply the fact that $\mu$ is an isometric measure to the normalized reproducing kernels $K_{\lambda}$ (see \eqref{weiuhrwer})
 to see that 
$$1 = \int_{\T} \frac{1 - |\lambda|^2}{|1 - \overline{\zeta} \lambda|^2} h(\zeta) dm(\zeta) \quad \forall \lambda \in \D.$$ If we express the above as a Fourier series, we get
$$
1=\widehat h(0)+\sum_{n=1}^{\infty}\widehat h(-n)\overline{\lambda}^n+\sum_{n=1}^{\infty}\widehat h(n)\lambda^n,\qquad \lambda\in\D,
$$
and it follows that $h = 1$ $m$-a.e.~on $\T$. Thus $\mu = m$.
\end{proof}

\section{Bergman spaces}

The {\em Bergman space} $A^2$ is the space of analytic functions $f$ on $\D$ with finite norm
 \[
 \|f\|_{A^2}:= \left(\int_{\D}|f|^2 dA\right)^{\tfrac{1}{2}},
\]
where $dA=dxdy/\pi$ is normalized area Lebesgue measure on $\D$ \cite{MR2033762, HKZ}.
As with the Hardy space, we begin our discussion with the Carleson
measures for $A^2$. This was done by Hastings \cite{hastings}:

\begin{Theorem}
For $\mu \in M_{+}(\D)$ the following are equivalent:
\begin{enumerate}
\item[(i)] $\mu(S_{I}) \lesssim |I|^2$ for every arc $I\in\T$;
\item[(ii)] $\|f\|_{\mu} \lesssim \|f\|_{A^2}$ for every $f \in A^2$.
\end{enumerate}
\end{Theorem}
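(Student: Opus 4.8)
\section*{Proof proposal}

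The plan is to prove the two implications separately: the direction (ii) $\Rightarrow$ (i) by testing the embedding on the (normalized) reproducing kernels of $A^2$, and the substantive direction (i) $\Rightarrow$ (ii) by a Whitney decomposition of $\D$ combined with the sub-mean-value property of $|f|^2$.

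For (ii) $\Rightarrow$ (i), I would first recall that the reproducing kernel for $A^2$ is $k_\lambda(z) = (1-\overline{\lambda}z)^{-2}$, so that $\|k_\lambda\|_{A^2} = (1-|\lambda|^2)^{-1}$ and the normalized kernel is $K_\lambda(z) = (1-|\lambda|^2)(1-\overline{\lambda}z)^{-2}$, with $\|K_\lambda\|_{A^2} = 1$. Given an arc $I$ with $|I| = h$ centered at $e^{i\theta_0}$, set $\lambda = (1-h)e^{i\theta_0}$. For $z \in S_I$ one has $1 - |\lambda|^2 \asymp h$ and $|1 - \overline{\lambda}z| \lesssim h$, hence $|K_\lambda(z)|^2 \gtrsim h^{-2}$ throughout $S_I$. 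Applying (ii) to $f = K_\lambda$ then gives
\[
h^{-2}\,\mu(S_I) \lesssim \int_{S_I}|K_\lambda|^2\,d\mu \leqslant \|K_\lambda\|_{\mu}^2 \lesssim \|K_\lambda\|_{A^2}^2 = 1,
\]
which is exactly the window condition $\mu(S_I) \lesssim |I|^2$.

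For (i) $\Rightarrow$ (ii), the plan is to decompose $\D$ into a Whitney family $\{Q_{n,k}\}$: for $n \geqslant 0$ and $0 \leqslant k < 2^n$, let $I_{n,k}$ be the dyadic arc of length $2^{-n}$ and let $Q_{n,k}$ be the part of $S_{I_{n,k}}$ with $1 - 2^{-n} \leqslant |z| \leqslant 1 - 2^{-n-1}$. These regions tile $\D$ up to a null set, satisfy $A(Q_{n,k}) \asymp |I_{n,k}|^2$, and each has bounded hyperbolic diameter. The key analytic input is the sub-mean-value property of the subharmonic function $|f|^2$: for a fixed dilate $\widehat{Q}_{n,k} \subset \D$ of $Q_{n,k}$ (again of bounded hyperbolic size), one has $\sup_{Q_{n,k}}|f|^2 \lesssim A(\widehat{Q}_{n,k})^{-1}\int_{\widehat{Q}_{n,k}}|f|^2\,dA$. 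Combining this with the hypothesis $\mu(Q_{n,k}) \leqslant \mu(S_{I_{n,k}}) \lesssim |I_{n,k}|^2 \asymp A(Q_{n,k})$, I would estimate
\[
\int_{\D} |f|^2\,d\mu = \sum_{n,k}\int_{Q_{n,k}}|f|^2\,d\mu \lesssim \sum_{n,k} A(Q_{n,k})\,\frac{1}{A(\widehat{Q}_{n,k})}\int_{\widehat{Q}_{n,k}}|f|^2\,dA \lesssim \sum_{n,k}\int_{\widehat{Q}_{n,k}}|f|^2\,dA.
\]
Since the dilates $\widehat{Q}_{n,k}$ have bounded overlap, the last sum is $\lesssim \|f\|_{A^2}^2$, which completes the argument.

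The main obstacle is the geometric bookkeeping in (i) $\Rightarrow$ (ii): one must choose the Whitney regions and their dilates so that (a) each dilate stays inside $\D$, (b) each $\widehat{Q}_{n,k}$ is covered by boundedly many Carleson windows comparable in size to $I_{n,k}$, so that the hypothesis $\mu(S_I) \lesssim |I|^2$ genuinely controls $\mu(Q_{n,k})$, and (c) the overlap constant of $\{\widehat{Q}_{n,k}\}$ is uniform. Once the decomposition is fixed, the sub-mean-value inequality (valid precisely because each $\widehat{Q}_{n,k}$ has uniformly bounded hyperbolic size) and the comparison $A(Q_{n,k}) \asymp |I_{n,k}|^2$ make the remaining estimates routine.
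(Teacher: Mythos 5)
The paper states this result as Hastings' theorem and gives only a citation, so there is no in-text proof to compare against; your argument must stand on its own, and it does. Both halves are correct and constitute the standard proof. In (ii) $\Rightarrow$ (i) the kernel computation is right: with $\lambda=(1-h)e^{i\theta_0}$ one has $1-|\lambda|^2\asymp h$ and $|1-\overline{\lambda}z|\lesssim h$ on $S_I$, giving $|K_\lambda|^2\gtrsim h^{-2}$ there (for large arcs, say $|I|\geqslant \tfrac12$, the window condition is trivial since $\mu$ is finite, so restricting to small $h$ costs nothing). In (i) $\Rightarrow$ (ii) the Whitney pieces $Q_{n,k}$ are comparable to pseudo-hyperbolic disks $D(a,r)$ of fixed radius, so your decomposition is essentially the reformulation the paper quotes from \cite[Theorem 2.15]{HKZ}: the hypothesis $\mu(S_I)\lesssim|I|^2$ passes to $\mu(Q_{n,k})\lesssim A(Q_{n,k})\asymp(1-|a|^2)^2$, the sub-mean-value inequality for the subharmonic function $|f|^2$ over a fixed dilate of bounded hyperbolic size is exactly the right local tool, and the bounded-overlap summation closes the argument. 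The only bookkeeping worth spelling out in a full write-up is the one you already flag: choosing the dilate $\widehat{Q}_{n,k}$ with Euclidean radius a fixed fraction of $2^{-n}$ so that it stays in $\D$ and meets only $O(1)$ of the other dilates. As the paper notes, the same scheme runs verbatim for $A^p$, $0<p<\infty$, since $|f|^p$ is still subharmonic.
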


We also refer to \cite{HKZ} for further information about Carleson measures in Bergman spaces, including an equivalent restatement of this theorem involving pseudo-hyperbolic disks. In particular (see \cite[Theorem 2.15]{HKZ}) condition (i) is replaced by the condition: there exists an $r\in (0,1)$ such that 
\[
 \mu(D(a,r))\lesssim A(D(a,r)), \quad a \in \D, 
\] 
where
$$
D(a, r) = \left\{z \in \C: \left|\frac{z - a}{1 - \overline{z} a}\right| < r\right\}
$$
denotes a pseudo-hyperbolic disk of radius $r$ centered at $a$.
Observe that since $r$ is fixed,
we have $A(D(z,r))\asymp (1-|z|^2)^2$. Again, the geometric condition measures the 
amount of mass that $\mu$ places on a pseudohyperbolic
disk with respect to an intrinsic area measure of that disk. Hastings result was generalized by Oleinik and Pavlov, and Stegenga (see 
\cite{Luecking85} for the references).

Reverse Carleson embeddings for the Bergman spaces, and other closely related spaces, were discussed by Luecking \cite{Luecking81, Luecking85, Luecking88}. One of his first results in this direction concerns dominating sets, i.e., measures of the type $\chi_{G} dA$ (see \eqref{784504rdsf}). Here we have the following ``reverse'' of the inequality in Hasting's result (see \cite{Luecking81}). 

\begin{Theorem}\label{L-early}
Suppose $G$ is a (Lebesgue) measurable subset of $\D$. Then $\mu=\chi_{G} dA$ is a reverse Carleson measure for $A^{2}$ if and only if
$\mu(S_I) \gtrsim |I|^2$ for all arcs $I \subset \T$. 
\end{Theorem}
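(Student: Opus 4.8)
The plan is to prove the equivalence in both directions, where the easy direction is the necessity of the window condition and the substantive direction is the sufficiency.

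\smallskip

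First I would observe that the necessity of $\mu(S_I) \gtrsim |I|^2$ is essentially automatic. If $\mu = \chi_G\, dA$ is a reverse Carleson measure for $A^2$, then $\|f\|_{A^2} \lesssim \|f\|_\mu$ for all $f \in A^2$. The natural test functions are the normalized reproducing kernels for $A^2$, namely (up to constants) $k_a^{A^2}(z) = (1-\overline a z)^{-2}$, whose $A^2$-norm is comparable to $(1-|a|^2)^{-1}$. Testing the reverse inequality on these kernels (or on suitable powers of $(1-\overline a z)^{-1}$ concentrated near the boundary point $a/|a|$) forces $\mu$ to place a definite amount of mass near each boundary arc, and unwinding the estimate for a point $a$ near the center of a Carleson window $S_I$ yields $\mu(S_I) \gtrsim |I|^2$, using $A(S_I) \asymp |I|^2$. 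This mirrors the implication (ii) $\Rightarrow$ (iii) in Theorem \ref{thmHMNO}, and in the Bergman setting the comparison to the intrinsic area $A(D(a,r)) \asymp (1-|a|^2)^2$ is the cleanest way to book-keep it.

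\smallskip

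The harder direction is sufficiency: assuming $\mu(S_I) = A(G \cap S_I) \gtrsim |I|^2$ for all arcs $I$, I want to conclude $\|f\|_{A^2}^2 \lesssim \int_G |f|^2\, dA$ for all $f \in A^2$. The key reduction is that the geometric condition on Carleson windows should be converted into an equivalent \emph{local} density condition on pseudo-hyperbolic disks: the hypothesis $A(G \cap S_I) \gtrsim |I|^2$ is equivalent to the statement that there is $r \in (0,1)$ and $\gamma > 0$ with
\[
A\big(G \cap D(a,r)\big) \geqslant \gamma\, A\big(D(a,r)\big) \qquad \text{for all } a \in \D,
\]
i.e. $G$ has a uniform relative lower density in every pseudo-hyperbolic disk. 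This passage between windows and pseudo-hyperbolic disks is exactly the Bergman-space dictionary invoked after the Hastings theorem in the excerpt, and I expect it to be the main technical obstacle, since one has to handle disks $D(a,r)$ both deep in $\D$ and near $\T$ uniformly, controlling overlaps by the bounded-covering property of pseudo-hyperbolic disks.

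\smallskip

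Granting the uniform local density of $G$, the proof is completed by a Bergman mean-value / subharmonicity argument. For a fixed $r$, the value $|f(a)|^2$ is controlled by the average of $|f|^2$ over $D(a,r)$ (up to the comparable area $(1-|a|^2)^2$), and the relative-density hypothesis lets one replace integration over the full disk by integration over $G \cap D(a,r)$ after a standard application of subharmonicity together with the reverse Hölder / Luecking-type covering lemma that turns the pointwise bound into the norm bound $\int_\D |f|^2\,dA \lesssim \int_G |f|^2\,dA$. Concretely, one covers $\D$ by a lattice of pseudo-hyperbolic disks of bounded multiplicity, applies the local estimate on each disk, and sums; the uniform density constant $\gamma$ becomes the reverse Carleson constant. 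The delicate point throughout is that the constants produced must be independent of $a$ and of the disk in the cover, which is precisely what the uniformity in the window condition guarantees.
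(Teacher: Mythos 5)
The survey does not actually prove Theorem~\ref{L-early}; it is quoted from Luecking \cite{Luecking81}, so there is no in-paper argument to compare against. Judged on its own terms, your necessity direction is fine: testing on the normalized Bergman kernels $K_a(z)=(1-|a|^2)(1-\overline a z)^{-2}$, using that $\int_{\D\setminus D(a,r)}|K_a|^2\,dA=1-r^2$ (by M\"obius invariance) and that $|K_a|^2\asymp A(D(a,r))^{-1}$ on $D(a,r)$, gives uniform relative density of $G$ in pseudo-hyperbolic disks and hence in Carleson windows. The passage between the window condition and the disk-density condition, which you flag as the main obstacle, is also true and is in fact the routine part.

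The genuine gap is in the sufficiency direction, at the step ``the relative-density hypothesis lets one replace integration over the full disk by integration over $G\cap D(a,r)$ after a standard application of subharmonicity together with a covering lemma.'' No such local estimate exists: the inequality $\int_{D}|f|^2\,dA\leqslant C(\gamma)\int_{E}|f|^2\,dA$, for $E\subset D$ of relative area $\gamma$ and $f$ analytic, is \emph{false} with a constant depending only on $\gamma$. For instance, with $D=D(0,1/2)$, $f(z)=z^n$ and $E=\{|z|<\tfrac12\sqrt{\gamma}\}$ the ratio of the two sides is $\gamma^{-(n+1)}\to\infty$; similarly $f(z)=(1-4z)^n$ with $E$ a small disk about $1/4$ defeats any version bounding $|f(0)|$ or $\sup_{\frac12 D}|f|$ by the integral over $E$. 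Subharmonicity of $|f|^2$ only bounds the value at the center by the average over the disk; it gives no lower bound for the integral over a positive-density subset, since $|f|^2$ may concentrate exactly on $D(a,r)\setminus G$. This is precisely what makes Luecking's theorem nontrivial. His actual mechanism has two ingredients you are missing: (A) a Fubini argument showing that, for $\varepsilon$ small, at least half the mass of $\|f\|_{A^2}^2$ lives on the set $\mathcal E(f)=\{z:|f(z)|^2>\varepsilon\,A(D(z,r))^{-1}\int_{D(z,r)}|f|^2dA\}$ where $f$ is comparable to its local average; and (B) for $z\in\mathcal E(f)$ only, a quantitative lower bound on $|f|$ over most of $D(z,r)$ obtained from the subharmonicity of $\log|f|$ (an estimate on the area of the sublevel set $\{\log|f|<-t\}$), which is what lets the density of $G$ produce $\int_{G\cap D(z,r)}|f|^2dA\gtrsim\int_{D(z,r)}|f|^2dA$ at those points; one then integrates over $\mathcal E(f)$ and applies Fubini again. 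Without restricting to $\mathcal E(f)$ the local reverse estimate is simply unavailable, so the covering-and-summing scheme as you describe it cannot be carried out.
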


A similar result holds for the harmonic Bergman space \cite{LueHarm}. 
 We will discuss dominating sets again later when we cover model spaces (see Definition \ref{74yegsv0oijnc}). 

As it turns out, the general reverse Carleson measure result for Bergman spaces is more delicate \cite[Thm. 4.2]{Luecking85}. 
\begin{Theorem}\label{ThmLuecking85}
Let $\delta, \varepsilon>0$. 
Then there exists a $\beta>0$ with the
following property: Whenever $\mu \in M_{+}(\D)$ for which 
\begin{equation}\label{carlcondAp}
c = \sup_{a \in \D} \frac{\mu(D(a,1/2))}{A(D(a,1/2))} < \infty,
\end{equation}
and for which the set 
\begin{equation}\label{carlinvAp}
 G = \{z : \mu(D(z,\beta)) >  \varepsilon c A(D(z,\beta))\}
\end{equation}
satisfies 
\begin{equation}\label{reldens}
 m(G\cap S_I)\geqslant \delta |I|^2,
\end{equation} 
then $\|f\|_{A^2} \lesssim \|f\|_{\mu}$ for all $f \in A^2$.
\end{Theorem}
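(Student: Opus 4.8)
The plan is to reduce the statement to the dominating-set result of Theorem~\ref{L-early} and then to compare the area integral over $G$ with the integral against $\mu$. By hypothesis \eqref{reldens} the set $G$ satisfies $m(G\cap S_I)\gtrsim|I|^2$ for every arc $I\subset\T$, so Theorem~\ref{L-early} applies to the measure $\chi_G\,dA$ and gives $\|f\|_{A^2}^2\lesssim\int_G|f|^2\,dA$ for all $f\in A^2$, with a constant depending only on $\delta$. Everything therefore reduces to the single inequality
$$\int_G|f|^2\,dA\lesssim\int_{\D}|f|^2\,d\mu,\qquad f\in A^2,$$
which is the only place where $\mu$ itself (rather than the set $G$) enters.

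For this core inequality I would localize at the points $z\in G$ and aim at the pointwise reverse estimate $|f(z)|^2\lesssim\mu(D(z,\beta))^{-1}\int_{D(z,\beta)}|f|^2\,d\mu$, with constant depending on $\varepsilon$ and $\beta$. The decisive point is the interplay of the two hypotheses on $\mu$. The thickness \eqref{carlinvAp} gives $\mu(D(z,\beta))>\varepsilon c\,A(D(z,\beta))$ for $z\in G$, while the Carleson condition \eqref{carlcondAp}, which holds at every scale, forbids $\mu$ from concentrating: a covering argument shows that the mass $\mu$ places on $D(z,\beta)$ cannot sit on a sub-disk of area much smaller than $\varepsilon A(D(z,\beta))$, so that the effective support of $\mu|_{D(z,\beta)}$ occupies a portion of definite area. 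After the Möbius normalization $F=f\circ\phi_z$, where $\phi_z$ is the automorphism interchanging $0$ and $z$ (so that $F(0)=f(z)$), I would then invoke the fact that an analytic function cannot be large at the centre while remaining small on a set of definite area --- a two-constants/harmonic-measure (Remez-type) estimate --- to bound $|F(0)|^2$ by the average of $|F|^2$ against the normalized pull-back of $\mu$, which is exactly the asserted estimate.

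Granting this local inequality, the globalization is routine. Using the thickness $\mu(D(z,\beta))>\varepsilon c\,A(D(z,\beta))$ on the right and integrating $dA(z)$ over $G$, an application of Fubini's theorem (writing $w\in D(z,\beta)\iff z\in D(w,\beta)$, using $A(D(z,\beta))\asymp A(D(w,\beta))$ for $w\in D(z,\beta)$, and the bounded overlap of the family $\{D(z,\beta)\}$) collapses the resulting double integral to $\tfrac1{\varepsilon c}\int_{\D}|f|^2\,d\mu$. This yields $\int_G|f|^2\,dA\lesssim\tfrac1{\varepsilon c}\int_{\D}|f|^2\,d\mu$, and combined with the reduction of the first paragraph gives $\|f\|_{A^2}^2\lesssim\int_{\D}|f|^2\,d\mu$, as required.

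The main obstacle is the local centre-value estimate. No purely metric or oscillation bound will do: the reverse inequality is genuinely false without the Carleson constraint --- pulling back $\xi\mapsto\xi^n$ to $D(z,\beta)$ and placing all of $\mu$ near the centre shows that area averages over $D(z,\beta)$ are hopeless, so one is forced to work with the boundary values $f(z)$ and to use the analyticity of $f$ together with the non-concentration forced by \eqref{carlcondAp} in an essential, quantitative way. Within this, the genuinely delicate point is upgrading the pointwise (or supremum) lower bound for $|F|$ on the support of the pull-back of $\mu$, coming from the harmonic-measure estimate, into a lower bound for the $L^2$ integral $\int_{D(z,\beta)}|f|^2\,d\mu$; it is precisely in controlling these constants that $\beta$ must be fixed small enough in terms of $\varepsilon$. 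If one can only establish the local estimate up to an error term $\theta(\beta)\,\|f\|_{A^2}^2$ with $\theta(\beta)\to0$ as $\beta\to0$, then $\beta$ must in addition be chosen small in terms of $\delta$, so that this error can be absorbed using the dominating-set bound of Theorem~\ref{L-early}; either way, the admissible $\beta$ depends only on $\delta$ and $\varepsilon$, as in the statement.
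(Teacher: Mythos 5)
Your opening reduction is the right one (and is how Luecking's own argument is organized, which the survey does not reproduce): hypothesis \eqref{reldens} plus Theorem~\ref{L-early} give $\|f\|_{A^2}^2\lesssim\int_G|f|^2\,dA$ with a constant depending only on $\delta$, and everything hinges on $\int_G|f|^2\,dA\lesssim\|f\|_\mu^2$. The Fubini globalization you describe is also fine. The genuine gap is the local centre-value estimate $|f(z)|^2\lesssim\mu(D(z,\beta))^{-1}\int_{D(z,\beta)}|f|^2\,d\mu$ for $z\in G$: it is false, and the non-concentration mechanism you invoke for it does not exist. Condition \eqref{carlcondAp} is a Carleson bound at the fixed scale $1/2$ only; for a sub-disk $D(w,\rho)\subset D(z,\beta)$ it yields merely $\mu(D(w,\rho))\leqslant \mu(D(w,1/2))\leqslant cA(D(w,1/2))\asymp c(1-|z|^2)^2$, which exceeds the membership threshold $\varepsilon c\,A(D(z,\beta))\asymp\varepsilon c\beta^2(1-|z|^2)^2$ by a factor $\asymp(\varepsilon\beta^2)^{-1}$. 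So the entire mass of $\mu$ on $D(z,\beta)$ may be a single atom (the normalized Carleson ratio degrades like $\rho^{-2}$ as the scale shrinks, so "Carleson at every scale" in the sense you need is simply not available). Placing that atom at a zero of $f$ makes $\int_{D(z,\beta)}|f|^2\,d\mu=0$ while $f(z)\neq0$, so no two-constants or Remez-type argument can rescue the pointwise inequality: the effective support of $\mu|_{D(z,\beta)}$ can have zero capacity.

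The fallback you mention in your last paragraph is in fact the only viable route, and it is essentially Luecking's: one must work with the averaged quantity rather than the point value. Concretely, compare $\frac{1}{A(D(w,\beta))}\int_{D(w,\beta)}|f(z)|^2\,dA(z)$ with $|f(w)|^2$; the oscillation satisfies $\bigl||f(z)|^2-|f(w)|^2\bigr|\lesssim\beta\,M_w^2$ for $z\in D(w,\beta)$, where $M_w^2$ is the mean of $|f|^2$ over $D(w,1/2)$ (Cauchy estimate for $f'$ plus the sub-mean-value property). Integrating $d\mu(w)$ and applying Fubini together with \eqref{carlcondAp} bounds the total error by $C\beta c\,\|f\|_{A^2}^2$ --- this, not local non-concentration, is where the Carleson hypothesis is actually used --- while the main term is $\gtrsim\varepsilon c\int_G|f|^2\,dA\gtrsim\varepsilon c\,\kappa(\delta)\|f\|_{A^2}^2$ by Theorem~\ref{L-early}. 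Choosing $\beta$ so small that $C\beta\leqslant\tfrac12\varepsilon\kappa(\delta)$ absorbs the error and finishes the proof; note this is exactly why $\beta$ depends on both $\varepsilon$ and $\delta$. So your architecture survives, but the step you flag as "the main obstacle" is not merely delicate --- as formulated it is false, and you have not supplied the oscillation estimate that replaces it.
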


Notice how this theorem requires {\it a priori} that $\mu$ is
a Carleson measure for $A^2$ (via \eqref{carlcondAp}). The next two conditions tell us that the reverse Carleson condition \eqref{carlinvAp} must be
satisfied on a set which is, in a sense, relatively dense.
Moreover, the relative density condition in \eqref{reldens} should hold close
to the unit circle.


For simplicity we stated the results for the $A^2$ Bergman space. Analogous theorems (with the same proofs) are true for the $A^p$ Bergman spaces for $p \in (0, \infty)$. 

\section{Fock spaces} 

We briefly discuss Carleson and reverse Carleson measures for a space of entire functions - the Fock space. Here the conditions are a bit different since the functions are entire and there are no ``boundary conditions'' or ``Carleson boxes''.  

Let
$\phi$ be a subharmonic function on $\C$ (often called the weight) such that 
$$\frac{1}{c} \leqslant \Delta \phi\leqslant c$$ for some 
positive constant $c$. The {\em weighted Fock space} $\FF_{\phi}^2$ is the space of entire functions $f$ with finite norm
\[
 \|f\|_{\phi} =\left(\int_{\C}|f(z)|^2 e^{-2\phi(z)}dA(z)\right)^{\tfrac{1}{2}}.
\] 
Recall that $d A$ is Lebesgue area measure on $\C$. 
When $\phi(z) = |z|^2$, this space is often called the Bargmann-Fock space. A good primer for the Fock spaces is \cite{MR2934601}. There is also a suitable $L^p$ version of this space denoted by $\FF_{\phi}^{p}$ and the results below apply to these spaces as well.

The Carleson measures for $\FF^{2}_{\phi}$ were characterized by several authors (for various $\phi$) but the final, most general, result is found in Ortega-Cerd\`a \cite{OCsamp}. Below let 
$B(a, r) = \{z \in \C: |z - a| < r\}$ be the open ball in $\C$ centered at $a$ with radius $r$.  

\begin{Theorem}
For a locally finite positive Borel measure $\mu$ on $\C$, a weight $\phi$ as above, and 
$d\nu=e^{-2\varphi}d\mu$, the following are equivalent: 
\begin{enumerate}
\item[(i)] $\|f\|_{\nu} \lesssim \|f\|_{\phi}$ for all $f \in \FF_{\phi}^{2}$;
\item[(ii)] $\sup_{z \in \C} \mu(B(z, 1)) < \infty$.
\end{enumerate}
\end{Theorem}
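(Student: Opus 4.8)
The plan is to prove the two implications separately, using a submean-value estimate for the easy direction and a family of test functions concentrated near each point for the converse.

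For (ii) $\Rightarrow$ (i), the key is a pointwise estimate: for every entire $f$ and every $z \in \C$,
\[
|f(z)|^2 e^{-2\phi(z)} \lesssim \int_{B(z,1)} |f(w)|^2 e^{-2\phi(w)} \, dA(w),
\]
with a constant independent of $z$ and $f$. To see this, I would note that $|f|^2$ is log-subharmonic, while the hypothesis $\Delta\phi \leq c$ gives $\Delta(-2\phi(w) + c|w-z|^2) = -2\Delta\phi + 4c \geq 2c > 0$, so $e^{-2\phi(w) + c|w-z|^2}$ is also log-subharmonic; hence the product $v(w) = |f(w)|^2 e^{-2\phi(w)+c|w-z|^2}$ is subharmonic. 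The sub-mean value property of $v$ over $B(z,1)$, together with $e^{c|w-z|^2} \leq e^c$ there and $v(z)=|f(z)|^2e^{-2\phi(z)}$, yields the estimate. Once this is in hand, the embedding follows by Tonelli: writing $u(w) = |f(w)|^2 e^{-2\phi(w)}$,
\[
\|f\|_\nu^2 = \int_\C u \, d\mu \lesssim \int_\C \Big( \int_{B(z,1)} u(w)\, dA(w) \Big) d\mu(z) = \int_\C u(w)\, \mu(B(w,1))\, dA(w) \leq \Big(\sup_w \mu(B(w,1))\Big) \|f\|_\phi^2.
\]

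For (i) $\Rightarrow$ (ii), I would test the embedding against functions concentrated near each point. The cleanest route uses the reproducing kernel $K_w$ of $\FF_\phi^2$ (point evaluations are bounded by the pointwise estimate above, so the space is a reproducing kernel Hilbert space). The diagonal estimate $K_w(w) \asymp e^{2\phi(w)}$ shows that the normalized kernel $k_w = K_w/\|K_w\|_\phi$ satisfies $|k_w(w)|^2 e^{-2\phi(w)} \asymp 1$, and off-diagonal kernel bounds propagate this to a fixed ball, $|k_w(z)|^2 e^{-2\phi(z)} \gtrsim 1$ for $z \in B(w,\delta)$ with $\delta$ fixed. Applying (i) to $k_w$ and discarding the integral outside this ball,
\[
\mu(B(w,\delta)) \lesssim \int_{B(w,\delta)} |k_w|^2 e^{-2\phi}\, d\mu \leq \|k_w\|_\nu^2 \lesssim \|k_w\|_\phi^2 = 1,
\]
and covering $B(w,1)$ by a bounded number of $\delta$-balls gives $\sup_w \mu(B(w,1)) < \infty$.

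The main obstacle is the lower bound on the test functions, equivalently the lower bound $K_w(w) \gtrsim e^{2\phi(w)}$ together with the off-diagonal control. This is where the full strength of the hypothesis $\Delta\phi \geq 1/c$ enters: one takes a local holomorphic peak function near $w$, multiplies by a cutoff, and corrects the error with a solution of a $\bar\partial$-equation whose weighted $L^2(e^{-2\phi})$ norm is controlled by H\"ormander's estimate, which requires the strict positive lower bound on $\Delta\phi$. An alternative that bypasses the kernel entirely is to construct directly, for each $w$, an entire $f_w$ with $\|f_w\|_\phi \asymp 1$ and $|f_w(z)|e^{-\phi(z)} \gtrsim 1$ on $B(w,\delta)$ by the same $\bar\partial$-argument, and then run the displayed estimate with $f_w$ in place of $k_w$. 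Either way, the analytic heart is the $\bar\partial$-construction, while the remaining measure-theoretic steps (Tonelli and the covering argument) are routine.
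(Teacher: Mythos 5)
The paper does not prove this theorem; it is stated as a citation to Ortega-Cerd\`a, so there is no in-paper argument to compare against. Judged on its own terms, your proposal follows the standard route and is essentially sound. The implication (ii) $\Rightarrow$ (i) is complete and correct: the subharmonicity of $|f(w)|^2e^{-2\phi(w)+c|w-z|^2}$ (using only the upper bound $\Delta\phi\leqslant c$), the sub-mean-value inequality on $B(z,1)$, and Tonelli give exactly the claimed embedding with constant $e^c\sup_w\mu(B(w,1))$ up to normalization.

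For (i) $\Rightarrow$ (ii) you have correctly identified the analytic heart, but one step of the sketch as written would fail and needs the standard repair. If you solve $\bar\partial u=g_w\,\bar\partial\chi$ with the plain H\"ormander estimate in the weight $e^{-2\phi}$, you only learn $\int_{B(w,1)}|u|^2e^{-2\phi}\,dA\lesssim 1$, hence $|u(z)|\lesssim e^{\phi(z)}$ near $w$ with an absolute but unquantified constant; the triangle inequality $|f_w|\geqslant|g_w|-|u|$ then gives no lower bound, since $|g_w|e^{-\phi}\asymp 1$ with its own constants. The fix is to force $u(w)=0$, e.g.\ by running H\"ormander with the weight $2\phi(z)+2\log|z-w|$ (harmless since $\bar\partial\chi$ is supported where $|z-w|\asymp 1$, and the added point mass only improves the curvature hypothesis); then $u/(z-w)$ is holomorphic near $w$ with controlled weighted $L^2$ norm, so $|u(z)|\lesssim|z-w|e^{\phi(z)}$, and $|f_w(z)|e^{-\phi(z)}\gtrsim 1$ on $B(w,\delta)$ for $\delta$ small enough. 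You also need the existence of the local function $g_w=e^{h_w}$ with $|g_w|e^{-\phi}\asymp 1$ on $B(w,2)$, which follows from decomposing $\phi$ locally as a harmonic function plus a Green potential of $\Delta\phi$, the latter being uniformly bounded precisely because $\Delta\phi\leqslant c$. With these two points supplied, the testing argument and the covering of $B(w,1)$ by finitely many $\delta$-balls close the proof. Note also that the lower bound $\Delta\phi\geqslant 1/c$ is used only in this direction (through H\"ormander), which your write-up correctly reflects.
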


The discussion of reverse Carleson measures for Fock spaces was begun by 
Janson-Peetre-Rochberg \cite{JPR}, again {\it via} dominating sets. 

\begin{Theorem}
For a weight $\phi$, a measurable set $E \subset \C$, and $d\nu=e^{-2\phi}\chi_E dA$, the following are equivalent: 
\begin{enumerate}
\item[(i)] $\|f\|_{\phi} \lesssim \|f\|_{\nu}$ for all $f \in \FF_{\phi}$;
\item[(ii)] there exists an $R>0$ such that $\inf_{z\in\C}A(E\cap B(z,R))>0$.
\end{enumerate}
\end{Theorem}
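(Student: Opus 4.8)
The plan is to reformulate the equivalence in terms of the auxiliary function $u := |f|\, e^{-\phi}$, which is continuous on $\C$. Since $\|f\|_{\phi}^2 = \int_{\C} u^2\, dA$ and $\|f\|_{\nu}^2 = \int_{E} u^2\, dA$, assertion (i) says precisely that $E$ is a \emph{dominating set}, i.e. $\int_{\C} u^2\, dA \lesssim \int_{E} u^2\, dA$ for every $f \in \FF_{\phi}^2$, while (ii) is the relative density of $E$. Two facts about the weight will be used throughout. Because $c^{-1} \le \Delta\phi \le c$, on any ball of fixed radius one may write $\phi = h + \psi$ with $h$ harmonic and $\psi$ bounded by a constant depending only on $c$ and the radius; writing $h = \Re H$ with $H$ holomorphic and $g = f e^{-H}$, the subharmonicity of $|g|^2$ yields the pointwise sub-mean-value estimate
\[
 u(z)^2 = |f(z)|^2 e^{-2\phi(z)} \lesssim \frac{1}{A(B(z,r))}\int_{B(z,r)} u^2\, dA \qquad (r \text{ fixed}).
\]

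For the necessity (i)$\Rightarrow$(ii) I would test the dominating inequality on normalized reproducing kernels $\kappa_a = K(\cdot,a)/\sqrt{K(a,a)}$, so that $\|\kappa_a\|_{\phi} = 1$. It is standard for these weights that $K(a,a) \asymp e^{2\phi(a)}$ and that the kernel has the off-diagonal decay $|K(z,a)| \lesssim e^{\phi(z)+\phi(a)} e^{-c_0|z-a|}$ for some $c_0 > 0$; hence $|\kappa_a(w)|\, e^{-\phi(w)} \lesssim e^{-c_0|w-a|}$ uniformly in $a$. This gives both a uniform sup bound $\sup_w |\kappa_a(w)|^2 e^{-2\phi(w)} \lesssim 1$ and a uniform tail estimate $\int_{|w-a|>\rho} |\kappa_a(w)|^2 e^{-2\phi(w)}\, dA(w) \to 0$ as $\rho\to\infty$. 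If (ii) failed, then for every $\rho$ one has $\inf_a A(E \cap B(a,\rho)) = 0$; fixing $\rho$ so large that the tail is $<\varepsilon/2$ and then choosing $a$ with $A(E\cap B(a,\rho))$ so small that the sup bound times this area is $<\varepsilon/2$, we get $\|\kappa_a\|_{\nu}^2 < \varepsilon$ with $\|\kappa_a\|_{\phi}=1$. Since $\varepsilon$ is arbitrary, this contradicts the uniform constant in (i).

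For the sufficiency (ii)$\Rightarrow$(i) I would begin from relative density in averaged form, $\frac{1}{A(B(z,R))}\int_{B(z,R)}\chi_E\, dA \ge \gamma' > 0$, integrate against $u^2$, and interchange the order of integration (using translation invariance of $A$) to obtain
\[
 \gamma' \int_{\C} u^2\, dA \le \frac{1}{A(B(0,R))}\int_{E} \Big(\int_{B(w,R)} u(z)^2\, dA(z)\Big)\, dA(w).
\]
The natural wish is to dominate the inner integral by $u(w)^2$ and conclude, but this is a \emph{reverse} mean-value inequality, which fails pointwise: near a high-order zero of $f$ the ball average of $u^2$ vastly exceeds the central value. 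This is exactly where the Fock structure must enter. The remedy I would pursue is a Remez/Tur\'an-type local estimate in which the effective local degree of $g = f e^{-H}$ on a ball is controlled by the local ratio of norms, using $\Delta\phi \asymp 1$ (which caps, on average, the number of zeros in a fixed ball). One then splits $\C$ into a lattice of ``good'' balls, where the local degree is bounded so that the Remez constant is uniform and a genuine reverse estimate $\int_{E\cap B(w,R)} u^2 \gtrsim \int_{B(w,R)} u^2$ holds, and ``bad'' balls, whose total contribution to $\int_{\C}u^2$ is a small fraction of the whole because a large local degree forces a large local share of the global norm; summing over good balls and absorbing the bad ones yields $\int_{\C} u^2 \lesssim \int_{E} u^2$.

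The main obstacle is precisely this failure of the reverse mean-value inequality on a single ball: relative density is a purely metric hypothesis, whereas the conclusion concerns holomorphic functions, and no constant depending only on $\gamma'$ and the radii can bound $\int_{B(w,R)}u^2$ by $\int_{E\cap B(w,R)}u^2$ uniformly over all $f$ (the monomials $z^N$ defeat any such bound). The argument is therefore intrinsically global, and the weight hypothesis $c^{-1}\le\Delta\phi\le c$ is what rescues it by controlling the local degree in terms of the global norm. A softer route I would keep in reserve is a normal-families contradiction: were (i) to fail there would exist $f_n$ with $\|f_n\|_{\phi}=1$ and $\int_E u_n^2 \to 0$; after translating so as to retain a fixed share of the mass and passing to a limit — using that the normalized translated weights $\phi(\cdot + a_n)$ form a precompact family of admissible weights — one would produce a nonzero entire limit vanishing on a relatively dense set, which is impossible.
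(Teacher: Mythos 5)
The paper states this theorem without proof---it is a survey item credited to Janson--Peetre--Rochberg---so there is no internal argument to compare yours against, and the proposal must stand on its own. Your necessity direction (i)~$\Rightarrow$~(ii) is sound as sketched: testing on normalized reproducing kernels and invoking $K(a,a)\asymp e^{2\phi(a)}$ together with the uniform off-diagonal decay $|K(z,a)|\lesssim e^{\phi(z)+\phi(a)}e^{-c_0|z-a|}$ (nontrivial but standard facts for weights with $\Delta\phi\asymp 1$, due to Christ and to Marco--Massaneda--Ortega-Cerd\`a) does give a uniform sup bound and a uniform tail bound, so that balls on which $E$ has vanishing area force $\|\kappa_a\|_{\nu}\to 0$ while $\|\kappa_a\|_{\phi}=1$, contradicting (i).

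The sufficiency direction (ii)~$\Rightarrow$~(i) has a genuine gap, located exactly where you identify ``the main obstacle.'' The good-ball/bad-ball scheme is described but not executed: the assertion that ``a large local degree forces a large local share of the global norm'' is precisely the quantitative Luecking-type lemma that constitutes the substance of this implication, and nothing in the sketch proves it. Worse, the supporting claim that $\Delta\phi\asymp 1$ ``caps, on average, the number of zeros in a fixed ball'' is false in the form you need it: the normalized monomials $z^N/\|z^N\|_{\phi}$ for the Gaussian weight have norm one and $N$ zeros in $B(0,1)$, so no uniform bound on the local degree of a unit-norm element of $\FF^2_{\phi}$ exists, and the Remez constant on a single ball genuinely degenerates with $N$. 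The reserve normal-families argument also fails as stated, because one cannot in general ``translate so as to retain a fixed share of the mass'': for $f_N=z^N/\|z^N\|_{\phi}$ the function $u_N^2$ spreads over an annulus of radius about $\sqrt{N}$ and area about $\sqrt{N}$, so $\sup u_N\to 0$ and every fixed ball carries a vanishing fraction of $\int_{\C}u_N^2$; the extraction then produces the zero limit and no contradiction. To close the argument one must actually prove a local reverse estimate valid for a restricted class of $f$ on each ball (restricted, say, by a doubling or local-Bernstein condition that controls the effective degree there), together with a proof that the exceptional balls contribute a small fraction of $\int_{\C}u^2\,dA$; this is the content of the Luecking and Janson--Peetre--Rochberg arguments and is missing from the proposal.
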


Condition (ii) is a relative density condition which, in a way, appeared in Theorem \ref{L-early}. We will meet such a condition again in Theorem \ref{hdshdshdhfdh} below when we discuss the Paley-Wiener space. 

 In \cite{OCsamp} Ortega-Cerd\`a
examined the measures $\mu$ on $\C$ for which
$$
 \|f\|_{\phi}^2 \asymp \int_{\C}|f(z)|^2 e^{-2 \phi(z)}d\mu(z)\qquad \forall f\in\FF_{\phi,2}.
$$
in other words, the ``equivalent measures'' 
for $\FF_{\phi}^2$. 
He called such measures {\em sampling measures}.
A special instance is when $$\mu=\sum_{n \geqslant 1} \delta_{\lambda_n},$$ where $
\Lambda= \{\lambda_n\}_{n \geqslant 1}$
is a sequence in the complex plane. In this case, $\{\lambda_n\}_{n\geqslant 1}$ is called a {\em sampling sequence}, meaning that 
\[
\|f\|_{\phi}^2\asymp \sum_{n\geqslant 1}|f(\lambda_n)|^2 e^{-2\phi(\lambda_n)} \qquad \forall f\in\FF_{\phi,2}.
\]
 
 Contrary to the approach in Bergman spaces, where Luecking characterized Carleson and reverse Carleson measures which, in turn, yielded information on sampling sequences, Ortega-Cerd\`a discretized $\mu$ to
 reduce the general case of
sampling measures to that of sampling {\em sequences}. These were characterized in a series
of papers by Seip, Seip-Wallst\'en, Berndtsson-Ortega-Cerd\`a and Ortega-Cerd\`a-Seip
(see \cite{Seip04} for these references). The main summary theorem is the following:

\begin{Theorem}\label{OC1}
A sequence $\Lambda \subset \C$ is a sampling sequence for $\FF_{\phi}^2$
if and only if the following two conditions are satisfied:
\begin{enumerate}
\item[(i)] $\Lambda$ is a finite union of uniformly separated sequences.
\item[(ii)] There is a uniformly separated subsequence $\Lambda' \subset \Lambda$ such that
\[
{\displaystyle  \varliminf_{r\to\infty} \inf_{z\in\C} \frac{\#(B(z,r)\cap\Lambda')}{\int_{B(z,r)}\Delta\phi d A}
  > \frac{1}{2\pi}.}
\]
\end{enumerate}
\end{Theorem}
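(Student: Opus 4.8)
The plan is to prove Theorem~\ref{OC1} following Beurling's philosophy of comparing $\Lambda$ with its \emph{weak limits}, adapted to the weighted Fock setting. I would organize the argument around three reductions: (a) from sampling to a Carleson/separation structure, which yields condition (i); (b) from sampling to a uniqueness property of all weak limits of $\Lambda$; and (c) from that uniqueness property to the sharp density threshold in (ii). First I would dispose of condition (i). A sampling sequence is, by definition, one for which $\mu=\sum_{n}\delta_{\lambda_n}$ is a sampling measure, hence in particular a Carleson measure for $\FF_\phi^2$. By the Carleson theorem of Ortega-Cerd\`a stated above, this is equivalent to $\sup_{z\in\C}\mu(B(z,1))<\infty$, i.e.\ a uniform bound on the number of points of $\Lambda$ in every unit ball. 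A standard packing argument then shows that this is exactly the statement that $\Lambda$ is a finite union of uniformly separated sequences, which is (i). This also lets us assume henceforth that $\Lambda$, and the relevant subsequence $\Lambda'$, is separated, so that the associated counting measures behave well under the weak-star limits used below.

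Second, I would set up Beurling's comparison machinery. Equip the closed subsets of $\C$ with the topology of local Hausdorff convergence, equivalently weak-star convergence of the associated counting measures, and define $W(\Lambda)$ to be the collection of all sequences arising as limits of translates $\Lambda-z_k$ with $z_k\to\infty$. Because $\Delta\phi$ is bounded above and below, after subtracting a suitable harmonic (real part of analytic) term the weight is, near each point and up to uniformly controlled error, translation-comparable; this near-invariance is what makes the limit sets meaningful. The central structural statement I would prove is: \emph{$\Lambda$ is a sampling sequence for $\FF_\phi^2$ if and only if every $\Gamma\in W(\Lambda)$ is a uniqueness set}, i.e.\ no nonzero $f\in\FF_\phi^2$ vanishes on $\Gamma$. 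The forward direction uses that a failure of sampling produces, via a normal-families argument, functions of unit norm that are small on $\Lambda$ and concentrate near infinity, whose translates converge to a nonzero function vanishing on some limit set; the reverse direction is a compactness/quantitative-stability argument giving a uniform lower sampling constant once all limit sets are uniqueness sets.

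Third, I would convert the uniqueness property into the density inequality. The key analytic input is that for $f\in\FF_\phi^2$ with $f\not\equiv0$, the Riesz measure $\tfrac{1}{2\pi}\Delta\log|f|$ counts the zeros of $f$, while the growth constraint $\log|f|\le\phi+O(\log(2+|z|))$ controls it; comparing the zero-counting measure with $\tfrac{1}{2\pi}\Delta\phi$ by a Jensen/averaging estimate shows that a nonzero function can vanish at no more than $\tfrac{1}{2\pi}\int_{B(z,r)}\Delta\phi\,dA+o(r^2)$ points of any separated set in $B(z,r)$. Hence if the lower density of $\Lambda'$ strictly exceeds $\tfrac{1}{2\pi}$, every limit set is forced to be a uniqueness set and sampling follows; conversely, a density at or below the threshold permits the construction, by solving a $\overline\partial$-problem with H\"ormander's $L^2$-estimates against the weight $\phi$, of a nonzero $\FF_\phi^2$ function vanishing on a limit set, defeating sampling. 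This pins down the strict inequality in (ii).

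I expect the main obstacle to be the sharp threshold $\tfrac1{2\pi}$ together with the strictness of the inequality. Establishing the weak-limit characterization is a careful but essentially standard compactness argument; the delicate part is the two-sided matching of the extremal density with the critical value. On the sufficiency side one needs a quantitative uniqueness estimate that is uniform over all limit sets, and on the necessity side one must construct near-extremal test functions concentrated where $\Lambda'$ is sparsest---effectively weighted analogues of Weierstrass $\sigma$-functions produced through a $\overline\partial$-correction---while keeping the $L^2(e^{-2\phi})$ norm under control. Managing these $\overline\partial$-estimates with only $\tfrac1c\le\Delta\phi\le c$ available, and showing that the resulting constants single out exactly $\tfrac1{2\pi}$, is the analytic heart of the proof.
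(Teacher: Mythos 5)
The paper does not actually prove this theorem: it is quoted as a summary of results of Seip, Seip--Wallst\'en, Berndtsson--Ortega-Cerd\`a and Ortega-Cerd\`a--Seip, with a pointer to the literature, so there is no internal proof to compare yours against. Judged on its own terms, your outline correctly identifies the strategy used in those papers --- reduce (i) to the Carleson/packing condition, characterize sampling by uniqueness of all Beurling weak limits, and convert uniqueness into the density threshold via Jensen-type zero counting on one side and a $\overline\partial$-construction on the other.

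As a proof, however, it has genuine gaps, and they sit exactly at the points you flag at the end. First, the weak-limit machinery is not available off the shelf here: the weight is only assumed subharmonic with $1/c\le\Delta\phi\le c$, so translates of $\phi$ do not converge and one must take simultaneous weak-star limits of the pair consisting of the translated sequence and the translated measure $\Delta\phi\,dA$; the remark about ``subtracting a suitable harmonic term'' does not by itself make the weight translation-comparable, and establishing the equivalence ``sampling iff every weak limit is a uniqueness set, with a uniform constant'' in this generality is a substantial piece of work rather than a standard compactness argument. Second, the reduction ``this lets us assume $\Lambda'$ is separated'' hides a real lemma: one must show that a sampling set contains a separated subsequence that is still sampling (equivalently, still has lower density above the threshold). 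Third, and most seriously, the sharp constant $\tfrac{1}{2\pi}$ and the strictness of the inequality --- which you yourself identify as the analytic heart --- are asserted, not proved: the Jensen/Riesz comparison yields the non-strict inequality at critical density but not strictness, and the converse direction requires constructing a nonzero function of $\FF_{\phi}^2$ vanishing on a set of critical density, where the H\"ormander $\overline\partial$-estimate must be run against a carefully perturbed weight so that the solution lands in $L^2(e^{-2\phi})$ and not merely in a larger space. Until those three steps are carried out, what you have is a correct roadmap of the known proof rather than a proof.
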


To state the result in terms of sampling measures, we need to introduce some notation. For a large integer $N$ and positive numbers $\delta$ and $r$, decompose $\C$ into big squares $S$ of side-length $Nr$ and each square $S$ is itself decomposed into $N^2$ little squares of side-length $r$. Let $n(S)$ denote the number of little squares $s$ contained in $S$ such that $\mu(s)\geqslant\delta$. In terms of sampling measures, we have the following:

\begin{Theorem}\label{OC2}
The measure $\mu$ is a sampling measure if and only if the following conditions are satisfied:
\begin{enumerate}
\item[(i)] $\sup_{z \in \C} \mu(B(z, 1)) < \infty$;
\item[(ii)] There is an $r > 0$ and a grid consisting of squares of side-length $r$, an integer $N > 0$ 
and a positive number $\delta$ such that
\begin{equation}
\inf_S\frac{n(S)}{\int_S\Delta\phi\,dA}>\frac{1}{2\pi},
\end{equation}
where the infimum is taken over all squares $S$ consisting of $N^2$ little squares from 
the original grid.
\end{enumerate}
\end{Theorem}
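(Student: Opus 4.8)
The plan is to prove the two conditions separately, matching condition (i) with the \emph{direct} (Carleson) half of the norm equivalence and condition (ii) with the \emph{reverse} (lower sampling) half, and to obtain the reverse inequality by discretizing $\mu$ to a point set to which Theorem~\ref{OC1} applies. Condition (i) is disposed of immediately: by the Carleson measure theorem for $\FF_\phi^2$ stated above (applied with $d\nu = e^{-2\phi}\,d\mu$), the bound $\sup_{z\in\C}\mu(B(z,1))<\infty$ is equivalent to $\int_\C |f|^2 e^{-2\phi}\,d\mu \lesssim \|f\|_\phi^2$ for all $f$. Thus (i) is exactly the upper inequality and is present in both directions, so the whole content of the theorem is the assertion that, under (i), the reverse inequality $\|f\|_\phi^2 \lesssim \int_\C |f|^2 e^{-2\phi}\,d\mu$ is equivalent to the relative density condition (ii).

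For the sufficiency of (ii) I would fix the grid and the parameters $r,N,\delta$ and write $g = |f|^2 e^{-2\phi}$. Call a little square $s$ \emph{heavy} when $\mu(s)\geqslant\delta$. For each heavy $s$, choose $\lambda_s\in s$ at which $g$ does not exceed its $\mu$-average over $s$, so that $g(\lambda_s)\,\mu(s)\leqslant \int_s g\,d\mu$ and hence $g(\lambda_s)\leqslant \delta^{-1}\int_s g\,d\mu$; summing over heavy squares gives $\sum_s g(\lambda_s)\leqslant \delta^{-1}\int_\C g\,d\mu$. Set $\Lambda=\{\lambda_s\}$. Since $\mu$ is Carleson, each unit ball contains at most $O(1/\delta)$ heavy squares, so $\Lambda$ is a finite union of uniformly separated sequences (a bounded colouring of the little squares makes same-coloured points at least $r$ apart), which is condition (i) of Theorem~\ref{OC1}. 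Because $\tfrac1c\leqslant\Delta\phi\leqslant c$ one has $\int_S\Delta\phi\,dA \asymp \operatorname{Area}(S)$, so the hypothesis $\inf_S n(S)/\int_S\Delta\phi\,dA>\tfrac{1}{2\pi}$ translates into the Beurling-type lower density bound of condition (ii) of Theorem~\ref{OC1}. That theorem then yields $\|f\|_\phi^2 \lesssim \sum_\Lambda g(\lambda_n) = \sum_s g(\lambda_s) \leqslant \delta^{-1}\int_\C g\,d\mu$, the desired reverse inequality.

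For the necessity, if $\mu$ is a sampling measure then it is in particular Carleson, so (i) holds. For (ii), I would argue by contradiction: if the density condition failed for every admissible choice of $r,N,\delta$, then one can find big squares whose heavy little squares are too sparse, i.e. whose discretized densities drop below the threshold $\tfrac{1}{2\pi}$. Invoking the necessity half of Theorem~\ref{OC1} one produces entire functions of large norm that concentrate away from the heavy squares and hence carry only a small fraction of their mass in $L^2(e^{-2\phi}\,d\mu)$ (using the Carleson bound to discard the light squares), contradicting the reverse inequality.

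The main obstacle is quantitative and lies in the sharpness of the constant $\tfrac{1}{2\pi}$. First, the passage from a square integral to a point value, and from the point values back to the global norm, relies on the local sub-mean-value estimates for $|f|^2 e^{-2\phi}$ that are available precisely because $\Delta\phi$ is bounded above and below; these control the oscillation of $g$ on a cell of fixed side $r$. More seriously, the sequence $\Lambda$ constructed in the sufficiency step depends on $f$ through the choice of $\lambda_s$, so one must use a \emph{quantitative} form of Theorem~\ref{OC1} in which the sampling constant depends only on the separation and density parameters $(r,N,\delta)$ and the bound in (ii), and not on the individual sequence; and one must extract from the heavy squares a uniformly separated subsequence whose density still exceeds $\tfrac{1}{2\pi}$. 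Arranging the discretization so that this critical density is neither lost in the thinning nor weakened in either direction is the crux of the argument.
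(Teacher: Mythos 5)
The paper is a survey and does not prove Theorem \ref{OC2} in full; it only records Ortega-Cerd\`a's strategy: restrict $\mu$ to the heavy squares to get $\mu_1$, discretize $\mu_1$ to the \emph{fixed} weighted atomic measure $\mu_1^*=\sum_n\mu_1(s_n)\delta_{a_n}$ supported at the \emph{centers} $a_n$ of the heavy squares, prove via a Bernstein-type inequality that $\mu_1$ is sampling iff $\mu_1^*$ is, and then invoke Theorem \ref{OC1} for the fixed sequence of centers. Your proposal deviates from this at exactly the point where it breaks: you select the test points $\lambda_s\in s$ so that $g=|f|^2e^{-2\phi}$ is below its $\mu$-average on $s$, which makes $\Lambda=\{\lambda_s\}$ depend on $f$. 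Theorem \ref{OC1} as stated is a qualitative characterization for a single sequence; its sampling constant is not asserted to be uniform over all sequences sharing the same separation and density parameters, so the step $\|f\|_\phi^2\lesssim\sum_\Lambda g(\lambda_s)$ is not licensed. You name this as ``the crux'' but do not supply the uniform (Beurling-compactness-type) version of the sampling theorem that would be needed, so as written the sufficiency direction has a genuine hole.

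A second, related gap: condition (ii) of Theorem \ref{OC1} requires a single uniformly separated subsequence $\Lambda'$ whose lower density still exceeds $\tfrac{1}{2\pi}$. Your $f$-dependent points in adjacent heavy squares can be arbitrarily close, and the colouring trick that splits $\Lambda$ into finitely many separated classes divides the density among the classes, potentially pushing each below the critical threshold; you acknowledge this but do not resolve it. Using the centers $a_n$ removes both problems at once: the centers of distinct little squares are automatically $r$-separated (no thinning, no loss of density), and the comparison between $\int_s g\,d\mu$ and $g(a_n)\mu(s_n)$ is supplied by the Bernstein inequality rather than by an $f$-dependent choice of point. The necessity direction of your proposal is also only a contradiction sketch; the standard route is to bound the contribution of the light squares by $C\delta\|f\|_\phi^2$ via local sub-mean-value estimates, conclude that $\mu_1$ (hence $\mu_1^*$, hence the centers) must be sampling, and apply the necessity half of Theorem \ref{OC1}. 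I would recommend restructuring your argument around the fixed discretization $\mu_1^*$ and a Bernstein inequality, as in Ortega-Cerd\`a's proof.
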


Notice how (i) is a Carleson measure condition while (ii) is a reverse Carleson measure
condition.

To deduce Theorem \ref{OC1} from Theorem \ref{OC2}, Ortega-Cerd\`a first showed that
it is sufficient to consider the measure $\mu_1$ which is the part of $\mu$
supported only on the little squares $s$ for which $\mu(s)\geqslant \delta$ and then he discretized $\mu_1$ by $\mu_1^*=\sum_n \mu_1(s_n)\delta_{a_n}$, where $a_n$ is the center of
$s_n$. In order to show that $\mu_1$ is sampling exactly when $\mu_1^*$ is sampling, he used
a Bernstein-type inequality. This naturally links the problem of sampling measures
to the description of sampling sequences. Note that Bernstein inequalities also appear in the context of Carleson and reverse Carleson measures for model spaces (see Section~\ref{Sec:model-spaces}).

\section{Paley-Wiener space}

Though the Paley-Wiener space enters into the general discussion of model spaces presented in Section 6, we would like to present some older results which will help motivate the more recent ones. The {\em Paley-Wiener space} $PW$ is the space of entire functions $F$ of exponential type at most $\pi$, i.e., 
$$
\limsup_{|z|\to\infty}\frac{\log|F(z)|}{|z|}\leqslant\pi,
$$ 
and which are square integrable on $\R$. The norm on $PW$ is 
$$\|F\|_{PW} = \left(\int_{\R} |F(t)|^2 dt\right)^{\tfrac{1}{2}}.$$ A well-known theorem of Paley and Wiener \cite{deBranges68} says that $PW$ is the set of Fourier transforms of functions in $L^2$ 
which vanish on $\R \setminus [-\pi, \pi]$. 
Authors such as Kacnelson \cite{Kacnelson}, Panejah \cite{Panejah62, Panejah66},  and Logvinenko \cite{Logvinenko} examined Lebesgue measurable sets $E \subset \R$ for which 
$$\int_{\R} |F|^2 dt \asymp \int_{E} |F|^2 dt \quad \forall F \in PW.$$ 
Following \eqref{784504rdsf}, such sets will be called {\em dominating sets} for $PW$. Clearly we always have 
$$\int_{E} |F|^2 dt \leqslant \int_{\R} |F|^2 dt \quad \forall F \in PW.$$
The issue comes with the reverse lower bound. The summary theorem here is the following:

\begin{Theorem}\label{hdshdshdhfdh}
For a Lebesgue measurable set $E \subset \R$, the following are equivalent: 
\begin{enumerate}
\item[(i)] the set $E$ is a dominating set for $PW$;
\item[(ii)] there exists a $\delta>0$ and an $\eta>0$ such that
\begin{equation}\label{reldense}
 |E\cap [x-\eta,x+\eta]|\geqslant \delta, \quad \forall x\in\R.
\end{equation}
\end{enumerate}
\end{Theorem}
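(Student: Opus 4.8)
The plan is to prove the two implications separately, with the forward implication (i) $\Rightarrow$ (ii) being elementary and the reverse implication (ii) $\Rightarrow$ (i)---the Logvinenko--Sereda theorem---carrying all the real content. For (i) $\Rightarrow$ (ii) I would argue by contraposition. If (ii) fails, then taking $\eta=n$ and $\delta=1/n$ produces points $x_n\in\R$ with $|E\cap[x_n-n,x_n+n]|<1/n$. Fix once and for all a single nonzero $g\in PW$ of exponential type $<\pi$ with fast decay (e.g.\ a Fej\'er-type kernel, so that $|g(t)|^2\lesssim(1+t^2)^{-2}$), and set $F_n(t)=g(t-x_n)\in PW$. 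Then $\|F_n\|_{PW}=\|g\|_{PW}$ is a fixed positive constant, while after the substitution $s=t-x_n$,
\[
\int_E|F_n|^2\,dt=\int_{(E-x_n)\cap[-n,n]}|g|^2\,ds+\int_{(E-x_n)\setminus[-n,n]}|g|^2\,ds\leqslant\|g\|_\infty^2\cdot\tfrac1n+\int_{|s|>n}|g|^2\,ds\longrightarrow 0.
\]
Thus $\|F_n\|_{PW}^2/\int_E|F_n|^2\to\infty$, so no reverse inequality can hold and $E$ is not dominating.

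For the substantive direction (ii) $\Rightarrow$ (i) I would follow the local approach via a Remez--Tur\'an-type inequality. Partition $\R$ into consecutive intervals $J_k$ of a fixed length $L$ (a multiple of $2\eta$); the relative density hypothesis then supplies a fixed fraction $\delta_0>0$ with $|E\cap J_k|\geqslant\delta_0|J_k|$ for every $k$. The target inequality $\int_E|F|^2\gtrsim\|F\|_{PW}^2$ is to be assembled by summing local estimates, but the obstruction is that a band-limited $F$ is \emph{entire} and hence not localized to $J_k$, so any local comparison must control the influence of $F$ coming from outside $J_k$.

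To handle this I would first separate good from bad intervals. Call $J_k$ \emph{good} if $\int_{3J_k}|F|^2\leqslant A\int_{J_k}|F|^2$, where $3J_k$ is the concentric triple and $A$ is a large constant to be fixed. Since $\sum_k\int_{3J_k}|F|^2\leqslant 3\sum_k\int_{J_k}|F|^2=3\|F\|_{PW}^2$, a Chebyshev/Markov argument shows the bad intervals carry at most a fraction $3/A$ of the total mass; choosing $A=6$ forces the good intervals to carry at least half, i.e.\ $\sum_{k\text{ good}}\int_{J_k}|F|^2\geqslant\tfrac12\|F\|_{PW}^2$. On a good interval the ratio control converts, via the Plancherel--P\'olya and Bernstein inequalities, into a bound $\sup_{D_k}|F|\leqslant M\sup_{J_k}|F|$ for the analytic continuation of $F$ to a fixed complex neighborhood $D_k$ of $J_k$, with $M$ depending only on $A$ and $L$. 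The Tur\'an--Nazarov (Remez-type) inequality then yields a constant $\Gamma=\Gamma(\delta_0,M)$ with $\int_{J_k}|F|^2\leqslant\Gamma\int_{E\cap J_k}|F|^2$ on each good interval. Summing over good intervals,
\[
\tfrac12\|F\|_{PW}^2\leqslant\sum_{k\text{ good}}\int_{J_k}|F|^2\leqslant\Gamma\sum_{k\text{ good}}\int_{E\cap J_k}|F|^2\leqslant\Gamma\int_E|F|^2,
\]
which is exactly (i) with $c=1/(2\Gamma)$.

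The hard part is the local step: establishing the Remez/Tur\'an--Nazarov inequality for band-limited functions and, above all, correctly passing from the $L^2$ goodness on $3J_k$ to a \emph{uniform} growth bound $M$ for the analytic continuation, so that the Remez prefactor $\Gamma$ is a genuine constant independent of $F$ and $k$ rather than one that degenerates. The averaging trick isolating the good intervals is precisely what keeps $M$---and hence $\Gamma$---uniformly bounded; without it the local constants would blow up on exactly those intervals where $F$ concentrates, and the summation would fail.
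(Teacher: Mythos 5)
The paper states this result as a summary of work of Kacnelson, Panejah and Logvinenko(--Sereda) and gives no proof, so there is no in-paper argument to compare against; judged on its own terms, your proposal is correct and standard in the easy direction but has a genuine gap at the crux of the hard one. Your (i) $\Rightarrow$ (ii) argument with translated Fej\'er-type kernels is fine, and the architecture of (ii) $\Rightarrow$ (i) --- partition $\R$ into intervals, discard a ``bad'' family carrying at most half the mass by a Chebyshev count, apply a Remez/Tur\'an--Nazarov inequality on each good interval, and sum --- is exactly the right (Kovrijkine-style) scheme. The problem is your definition of a good interval.

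Declaring $J_k$ good when $\int_{3J_k}|F|^2\leqslant A\int_{J_k}|F|^2$ makes the Chebyshev count work, but it does \emph{not} yield the uniform bound $\sup_{D_k}|F|\leqslant M\sup_{J_k}|F|$ that you then feed into the Remez step. Plancherel--P\'olya and Bernstein are \emph{global} inequalities: for $F\in PW$ one has $|F(x+iy)|\leqslant e^{\pi|y|}\|F\|_{\infty}$ and $\|F^{(n)}\|_{2}\leqslant \pi^{n}\|F\|_{2}$, so the analytic continuation of $F$ to a fixed complex neighborhood of $J_k$ is controlled by the global norm of $F$, not by its mass on $3J_k$. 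There is no local substitute bounding $\sup_{D_k}|F|$ by data on $3J_k$ alone: a two-constants (harmonic measure) estimate shows that $\sup_{D_k}|F|/\sup_{3J_k}|F|$ can be as large as a positive power of $\|F\|_{\infty}/\sup_{3J_k}|F|$, and your doubling condition puts no bound on that ratio, since it compares $3J_k$ only with $J_k$ and never with the whole line. Because the Remez constant $\Gamma$ depends (exponentially in $\log M$) on the growth bound, a non-uniform $M$ ruins the final summation precisely when $F$ is spread over many intervals --- each good interval then sees a large local-to-global ratio and its own degenerate $\Gamma_k$. The standard repair (Kovrijkine) is to define goodness by simultaneous control of \emph{all derivatives}, $\int_{J_k}|F^{(n)}|^{2}\leqslant C^{\,n}\pi^{2n}\int_{J_k}|F|^{2}$ for every $n\geqslant 1$: this is still amenable to the Chebyshev count via the global Bernstein inequality, and, unlike the $L^2$-doubling condition, it genuinely controls the local Taylor expansion and hence delivers the uniform $M$ for the analytic continuation. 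With that replacement your outline becomes a complete proof; as written, the sentence converting $L^2$-goodness into the bound on $\sup_{D_k}|F|$ is the unproved (and, as stated, false) step.
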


Notice how condition (ii) is a relative density condition we have met before when studying the Bergman and Fock spaces. 

Lin \cite{MR0187074} generalized the above result for measures $\mu$ on $\R$. We say that a positive locally finite measure $\mu$ on $\R$ is {\em $h$-equivalent to Lebesgue measure} if there exists a $K > 0$ such that 
$$\mu(x - h, x + h) \asymp h \quad \forall x\in\R, |x| > K.$$

\begin{Theorem}
Suppose $\mu$ is a locally finite Borel measure on $\R$. 
\begin{enumerate}
\item[(i)] There exists a constant $\gamma > 0$ such that if $\mu$ is $h$-equivalent to Lebesgue measure for some $h < \gamma$ then 
$$\int_{\R} |F|^2 dt \asymp \int_\R |F|^2 d \mu \quad \forall F \in PW.$$
\item[(ii)] If 
$$\int_{\R} |F|^2 dt \asymp \int_\R |F|^2 d \mu \quad \forall F \in PW,$$
then $\mu$ is $h$-equivalent to Lebesgue measure for some $h > 0$.
\end{enumerate}
\end{Theorem}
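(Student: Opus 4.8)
The plan is to treat the two implications separately, using in both directions the Plancherel--P\'olya description of Carleson measures for $PW$ (the $PW$ analogue of Carleson's theorem: $\int_\R|F|^2\,d\mu\lesssim\int_\R|F|^2\,dt$ for all $F\in PW$ if and only if $\sup_{x}\mu(x,x+1)<\infty$) together with Bernstein's inequality $\int_\R|F'|^2\,dt\le\pi^2\int_\R|F|^2\,dt$, which is immediate from Plancherel since $\widehat{F'}(\xi)=i\xi\widehat F(\xi)$ is supported in $[-\pi,\pi]$. The upper estimate $\int_\R|F|^2\,d\mu\lesssim\int_\R|F|^2\,dt$ demanded by the $\asymp$ in (i) is just Plancherel--P\'olya applied to the uniform bound $\mu(x-h,x+h)\lesssim h$ (which yields $\sup_x\mu(x,x+1)<\infty$), so the real content of (i) is the reverse inequality, and the content of (ii) is extracting the two-sided window bound from the assumed equivalence.

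For the reverse inequality in (i), I would first establish a local comparison on a window $I=(x-h,x+h)$ on which $\mu$ is equivalent to Lebesgue measure, so that $c_1h\le\mu(I)\le c_2h$. Writing $F(t)=F(s)+\int_s^t F'$ for $s,t\in I$, Cauchy--Schwarz gives $|F(t)|^2\le 2|F(s)|^2+4h\int_I|F'|^2$; averaging in $s$ against $\mu/\mu(I)$ and integrating in $t$ against Lebesgue measure yields
\begin{equation*}
\int_I|F|^2\,dt\le\frac{4h}{\mu(I)}\int_I|F|^2\,d\mu+8h^2\int_I|F'|^2\,dt\le C\int_I|F|^2\,d\mu+8h^2\int_I|F'|^2\,dt,
\end{equation*}
with $C=4/c_1$. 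Summing over a tiling of $\R$ by consecutive windows of length $2h$ and invoking Bernstein gives $\int_\R|F|^2\,dt\le C\int_\R|F|^2\,d\mu+8\pi^2h^2\int_\R|F|^2\,dt$; choosing $\gamma$ so that $8\pi^2\gamma^2\le\tfrac12$ (e.g. $\gamma=1/(4\pi)$) lets me absorb the last term and conclude $\int_\R|F|^2\,dt\le2C\int_\R|F|^2\,d\mu$. The point is that this threshold $\gamma$ is universal, independent of $\mu$.

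The main obstacle is that $h$-equivalence is assumed only on $\{|x|>K\}$, so the local comparison is unavailable on the finitely many windows meeting $[-K,K]$, where $\mu$ may even vanish. Here I would discard the (nonnegative) near-origin $\mu$-mass and recover the corresponding Lebesgue mass from the far region instead: the set $\{|t|>K\}$ is relatively dense, hence a dominating set for $PW$ by Theorem~\ref{hdshdshdhfdh}, so $\int_\R|F|^2\,dt\lesssim\int_{|t|>K}|F|^2\,dt$, and the window argument above is applied only to windows contained in $\{|t|>K\}$. The delicate part is the bookkeeping of constants: the domination constant of $\{|t|>K\}$ degrades as $K\to\infty$, so it must be kept out of the Bernstein-absorption step (which is what fixes the universal $\gamma$) and allowed to enter only the final comparison constant, which is permitted to depend on $\mu$.

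For the necessity (ii), the upper window bound follows from the Carleson half of the equivalence: $\sup_x\mu(x,x+1)=:M<\infty$ by Plancherel--P\'olya, whence $\mu(x-h,x+h)\le(2h+2)M\lesssim h$ for every $h\ge1$. For the lower bound I would test the reverse inequality $\int_\R|F|^2\,dt\lesssim\int_\R|F|^2\,d\mu$ on translates $F_x(t)=F_0(t-x)$ of a fixed normalized, rapidly decaying $F_0\in PW$ --- for instance a suitable power of the cardinal sine $\sinc$ rescaled to exponential type $\pi$, so that $\|F_x\|_{PW}=1$ and $|F_0(t)|^2$ is summable against the linearly growing measure $\mu$. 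Splitting $\int_\R|F_x|^2\,d\mu$ into the window $(x-h,x+h)$ and its complement, the tail $\int_{|t-x|\ge h}|F_x|^2\,d\mu$ is bounded by $M\int_{|s|\ge h-1}|F_0(s)|^2\,ds$, which the decay makes arbitrarily small once $h$ is large. The sampling lower bound $1=\|F_x\|_{PW}^2\lesssim\int_\R|F_x|^2\,d\mu$ then forces the window integral to stay bounded below, and since $\int_{(x-h,x+h)}|F_x|^2\,d\mu\le\|F_0\|_\infty^2\,\mu(x-h,x+h)$ we obtain $\mu(x-h,x+h)\gtrsim1\asymp h$ for that fixed (large) $h$ and all $x$. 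Taking $h$ large enough to serve both bounds shows $\mu$ is $h$-equivalent to Lebesgue measure, completing (ii).
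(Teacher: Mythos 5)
This is Lin's theorem, which the survey states without proof (it is cited from \cite{MR0187074}), so there is no in-paper argument to compare against; I am assessing your proposal on its own terms. Your part (ii) is fine, and the core of part (i) --- the local inequality $\int_I|F|^2\,dt\le\frac{4h}{\mu(I)}\int_I|F|^2\,d\mu+8h^2\int_I|F'|^2\,dt$ on windows of length $2h$, summed over a tiling and closed by Bernstein's inequality with $8\pi^2h^2\le\frac12$ --- is correct and is exactly the right mechanism for producing a universal threshold $\gamma$.

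The gap is in your treatment of the exceptional region $\{|x|\le K\}$, and it is not merely ``delicate bookkeeping'': the device you propose cannot work as arranged. If you apply the window argument only on $\{|t|>K\}$, the two inequalities at your disposal are $\|F\|_2^2\le C_K\int_{|t|>K}|F|^2\,dt$ (domination) and $\int_{|t|>K}|F|^2\,dt\le\frac{4}{c_1}\|F\|_\mu^2+8\pi^2h^2\|F\|_2^2$; note that Bernstein only controls $\int_{|t|>K}|F'|^2\,dt$ by $\pi^2\|F\|_2^2$ over all of $\R$, not by $\pi^2\int_{|t|>K}|F|^2\,dt$. Chaining them gives $\|F\|_2^2\le\frac{4C_K}{c_1}\|F\|_\mu^2+8\pi^2h^2C_K\|F\|_2^2$, so the absorption requires $8\pi^2h^2C_K<1$ and the domination constant $C_K$ unavoidably enters the threshold, making $\gamma$ depend on $\mu$ --- precisely what you (correctly) insist must not happen. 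Rearranging (e.g.\ writing $\|F\|_2^2=\int_{|t|\le K}|F|^2+\int_{|t|>K}|F|^2$ and absorbing first) does not help: after absorption you are left needing $\int_{|t|\le K}|F|^2\,dt\le\frac14\|F\|_2^2+C_\mu\|F\|_\mu^2$, and the domination inequality only yields $\int_{|t|\le K}|F|^2\,dt\le(1-C_K^{-1})\|F\|_2^2$, whose coefficient is close to $1$, not to $\frac14$.

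What is missing is a separate lemma of the form: for $h<\frac12$ (say) and $\mu$ $h$-equivalent outside $[-K,K]$, one has $\int_{|t|\le K}|F|^2\,dt\le\frac14\|F\|_2^2+C(\mu)\|F\|_\mu^2$ for all $F\in PW$. This can be obtained by a compactness/uniqueness argument: a failing normalized sequence would converge locally uniformly to a nonzero $F\in PW$ vanishing $\mu$-a.e., which is impossible because every interval of length $2h<1$ with $|x|>K$ carries positive $\mu$-mass, so the zero set of $F$ would have lower density exceeding the type. The constant $C(\mu)$ so produced enters only the final comparison constant, while the threshold on $h$ remains universal. Until some such step is supplied, part (i) of your argument only proves the theorem with a $\gamma$ depending on $\mu$, which is a strictly weaker statement.
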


\section{Model spaces}\label{Sec:model-spaces}

A bounded analytic function $\Theta$ on $\D$ is called an {\em inner function} if the radial limits of $\Theta$ (which exist almost everywhere on $\T$ \cite{Duren}) are unimodular almost everywhere. Examples of inner functions include the Blaschke products $B_{\Lambda}$ with (Blaschke) zeros $\Lambda \subset \D$ and singular inner functions with associated (positive) singular measure $\nu$ on $\T$. In fact, every inner function is a product of these two basic types \cite{Duren}. 

Associated to each inner function $\Theta$ is a {\em model space}
$$\K_{\Theta} :=  (\Theta H^2)^\perp= \left\{f\in H^2: \int_{\T} f \overline{\Theta g} dm=0 \; \forall g\in H^2\right\}.$$
Model spaces are the generic (closed) invariant subspaces of $H^2$ for the backward shift operator
$$(S^{*} f)(z) = \frac{f(z) - f(0)}{z}.$$
 Moreover, the compression of the shift operator 
 $$(S f)(z) = z f(z)$$ to a model space is the so-called ``model operator'' for certain types of Hilbert space contractions. 

It turns out that the Paley-Wiener space $PW$ can be viewed as a certain type of model space. We follow \cite{MR0358396}. Let
$$\Psi(z) := \exp\left(2 \pi \frac{z + 1}{z - 1}\right)$$ be the atomic inner function with point mass at $z = 1$ and with weight $2 \pi$, 
$$(\mathscr{F} f)(x) := \frac{1}{\sqrt{2 \pi}} \int_{\R} e^{-i x t} f(t) dt,$$ the Fourier transform on $L^2(\R)$, and 
$$J: L^2(m) \to L^2(\R), \quad (J g)(x) = \frac{1}{\sqrt{\pi}} \frac{1}{x + i} f\big(\frac{x - i}{x + i}\big).$$
It is well known that  $\mathscr{F}$ is a unitary operator on $L^2(\R)$ and a change of variables will show that  $J$ is a unitary map from $L^2(m)$ onto $L^2(\R)$. It is also known \cite[p. 33]{MR0358396} that 
$$(\mathscr{F} J) \K_{\Psi} = L^2[0, 2 \pi].$$
If 
$$T: L^2[0, 2 \pi] \to L^2[-\pi, \pi], \quad (T h)(x) = h(x + \pi)$$ is the translation operator then
$$(T \mathscr{F} J) \K_{\Psi} = L^2[-\pi, \pi]$$ and 
$$(\mathscr{F} T \mathscr{F} J) \K_{\Psi} = PW.$$ Thus the Paley-Wiener space is an isometric copy of a certain model space in a prescribed way. 

 An important set associated with an inner function is its {\em boundary spectrum}
\begin{equation} \label{liminf-zero-set}
\sigma(\Theta):=\left\{ \xi\in\mathbb{T}:\;\varliminf_{z\to\xi}\left|\Theta\left(z\right)\right|=0\right\}.
\end{equation}

Using the factorization of $\Theta$ into a Blaschke product and a singular inner function, one can show that when $\sigma(\Theta) \not = \T$, there is a two-dimensional open neighborhood $\Omega$ containing $\T \setminus \sigma(\Theta)$ such that $\Theta$ has an analytic continuation to $\Omega$. 

Functions in model spaces can have more regularity than generic functions in $H^2$. Indeed, a result of Moeller  \cite{Mo} says every function in $\K_{\Theta}$ follows the behavior of its corresponding inner functions and has an analytic continuation to a two dimensional open neighborhood of  $\T \setminus \sigma(\Theta)$. In fact, one can say a little bit more. Indeed, for every $\xi \in \T \setminus \sigma(\Theta)$ the evaluation functional $E_{\xi} f = f(\xi)$ is continuous on $\K_{\Theta}$ with 
$$\|E_{\xi}\| = \sqrt{|\Theta'(\xi)|}.$$
Thus 
\begin{equation}\label{qwyw66xb111}
\sup_{\xi \in W} \|E_{\xi}\| < \infty
\end{equation} for any compact set $W \subset \D^{-} \setminus \sigma(\Theta)$.

 In terms of a measure $\mu \in M_{+}(\D^{-})$ being a Carleson measure for $\K_{\Theta}$, let us make the following simple observation.

\begin{Proposition}\label{t40proeihkqwj}
Suppose $\mu \in M_{+}(\D^{-})$ with support contained in $\D^{-} \setminus \sigma(\Theta)$. Then $\mu$ is a Carleson measure for $\K_{\Theta}$. 
\end{Proposition}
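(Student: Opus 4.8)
The plan is to reduce the Carleson inequality $\|f\|_\mu \lesssim \|f\|_{\K_\Theta}$ to the uniform boundedness of the pointwise evaluation functionals on the support of $\mu$, which is exactly the content of \eqref{qwyw66xb111}.

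First I would pass to the support $W := \operatorname{supp}(\mu)$. Since $\D^{-}$ is compact, $W$ is closed, hence compact. The hypothesis reads $W \subset \D^{-}\setminus\sigma(\Theta)$, and I would check that $\sigma(\Theta)$ is a \emph{closed} subset of $\T$: its complement in $\T$ is precisely the set across which $\Theta$, and therefore (by Moeller's theorem) every $f\in\K_\Theta$, continues analytically, which is an open condition. Consequently $\D^{-}\setminus\sigma(\Theta)$ is relatively open in $\D^{-}$, so $W$ and $\sigma(\Theta)$ are disjoint compact sets and $\operatorname{dist}(W,\sigma(\Theta))>0$; in particular $W$ is a compact subset of $\D^{-}\setminus\sigma(\Theta)$.

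Next, by \eqref{qwyw66xb111} applied to this compact $W$, set $M := \sup_{\xi\in W}\|E_\xi\| < \infty$. By Moeller's theorem each $f\in\K_\Theta$ extends analytically across $\T\setminus\sigma(\Theta)$, so $f$ is defined and continuous at every point of $W$ (including the boundary points $W\cap\T$), and the evaluation bound yields
$$|f(\xi)| = |E_\xi f| \leqslant \|E_\xi\|\,\|f\|_{\K_\Theta} \leqslant M\,\|f\|_{\K_\Theta}, \qquad \xi\in W.$$
Since $|f|^2$ is continuous, hence bounded and $\mu$-integrable on the compact set $W$, I would integrate this pointwise estimate against $\mu$. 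As $\mu$ is supported on $W$ and finite,
$$\int_{\D^{-}} |f|^2\,d\mu = \int_{W} |f|^2\,d\mu \leqslant M^2\,\mu(W)\,\|f\|_{\K_\Theta}^2 = M^2\,\mu(\D^{-})\,\|f\|_{\K_\Theta}^2,$$
which is the desired Carleson inequality with constant $M\sqrt{\mu(\D^{-})}$.

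The only delicate point is the reduction in the second paragraph. The individual evaluation norms satisfy $\|E_\xi\| = \sqrt{|\Theta'(\xi)|}$, and these can blow up as $\xi$ approaches $\sigma(\Theta)$; the hypothesis on the support is needed precisely to keep $W$ a positive distance from $\sigma(\Theta)$, so that \eqref{qwyw66xb111} furnishes a \emph{finite} uniform bound $M$. Once this compactness is in hand, everything else is a routine integration, and no Carleson-window or reproducing-kernel analysis is required.
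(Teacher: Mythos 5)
Your proof is correct and follows essentially the same route as the paper's: identify the support $W$ as a compact subset of $\D^{-}\setminus\sigma(\Theta)$, invoke the uniform bound \eqref{qwyw66xb111} on the evaluation functionals over $W$, and integrate the resulting pointwise estimate against the finite measure $\mu$. The only difference is that you spell out why $W$ stays a positive distance from $\sigma(\Theta)$ (closedness of the boundary spectrum), a point the paper leaves implicit.
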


\begin{proof}
Let $W$ denote the support of $\mu$. From our previous discussion, every $f \in \K_{\Theta}$ has an analytic continuation to an open neighborhood of $W$. Furthermore, using \eqref{qwyw66xb111} we see that
$$\sup_{\xi \in W} |f(\xi)| \lesssim \|f\|_{m} \quad \forall f \in \K_{\Theta}.$$
It follows that $\|f\|_{\mu} \lesssim \|f\|_{m}$ and hence $\mu$ is a Carleson measure for $\K_{\Theta}$. 
\end{proof}

Two observations come from Proposition \ref{t40proeihkqwj}. The first is that there are Carleson measures for $\K_{\Theta}$ which are not Carleson for $H^2$ since $\mu(S_I) \lesssim |I|$ need not hold for all arcs $I \subset \T$. In fact one could even put point masses on $\T \setminus \sigma(\Theta)$. This is in contrast with the $H^2$ situation where we have already observed in Theorem \ref{Carleson-revised} that if $\mu\in M_{+}(\D^{-})$ is a Carleson measure for $H^2$, then $\mu|_{\T} \ll m$. The second observation is that if there is to be a Carleson testing condition like $\mu(S_{I}) \lesssim |I|$, the focus needs to be on the Carleson boxes $S_I$ which are, in a sense, close to $\sigma(\Theta)$.

 So far we have avoided the issue of making sense of the integrals $\|f\|_{\mu}$ for $f \in \K_{\Theta}$ when the measure $\mu$ could potentially place mass on $\T$. Indeed, we side stepped this in Proposition \ref{t40proeihkqwj}  by stipulating that the measure places no mass on $\sigma(\Theta)$, where the functions in $\K_{\Theta}$ are not well-defined. In order to consider a more general situation, and to adhere to the notation used in \cite{TV}, we make the following definition. 
 
\begin{Definition} 
A measure $\mu\in M_+(\D^{-})$ will be called {\em $\Theta$-admissible}  if the singular component of $\mu|_{\T}$ (relative to Lebesgue measure) is concentrated on $\T\setminus\sigma(\Theta)$. 
\end{Definition}

Since functions from $\K_\Theta$ are continuous (even analytic) on this set, it follows that for $\Theta$-admissible measures and functions $f\in \K_\Theta$, the integral $\|f\|_{\mu}$ makes sense.

 
As was done with the Hardy spaces in Theorem \ref{Carleson-revised}, one could state the definition of a Carleson measure for $\K_{\Theta}$ to be a $\mu \in M_{+}(\D^{-})$ for which 
\begin{equation}\label{A-emb}
\|f\|_{\mu} \lesssim \|f\|_{m} \quad \forall f \in \K_{\Theta} \cap C(\D^{-}).
\end{equation}
Indeed, an amazing result of Aleksandrov \cite{Alek} says that $\K_{\Theta} \cap C(\D^{-})$ is dense in $\K_{\Theta}$ and so this set makes a good ``test set'' for the Carleson (reverse Carleson) condition. Furthermore, if $\mu \in M_{+}(\D^{-})$ and \eqref{A-emb} holds, then $\mu$ is $\Theta$-admissible, every function in $\K_{\Theta}$ has radial limits $\mu|_{\T}$-almost everywhere on $\T$, and $\|f\|_{\mu} \lesssim \|f\|_{m}$ for every $f \in \K_{\Theta}$. 

Carleson measures for $\K_{\Theta}$ were discussed in the papers of Cohn \cite{Cohn} and Treil and Volberg \cite{TV}. Their theorem is stated in terms of 
\begin{equation} \label{SLS}
\Omega(\Theta, \varepsilon) := \{z \in \D: |\Theta(z)| < \varepsilon\},\qquad 0<\varepsilon<1,
\end{equation}
the {\em sub-level sets} for $\Theta$. Note that boundary spectrum $\sigma(\Theta)$ is contained in the closure of any $\Omega(\Theta,\varepsilon)$, $0<\varepsilon<1$. 

\begin{Theorem}\label{7775w882727}
Suppose $\mu \in M_{+}(\D^{-})$ and define the following conditions:
\begin{enumerate}
\item[(i)]  $\mu(S_{I}) \lesssim |I|$ for all arcs $I \subset \T$ for which $S_I \cap\Omega(\Theta,\varepsilon) \not = \varnothing$;
\item[(ii)] $\mu$ is a Carleson measure for $\K_{\Theta}$;  
\item[(iii)] $\mu$ is $\Theta$-admissible and $\|k_\lambda^\Theta\|_\mu\lesssim \|k_\lambda^\Theta\|_m$ holds for every $\lambda\in\mathbb D$.
\end{enumerate}
Then $(i)\Longrightarrow (ii)\Longrightarrow (iii)$. Moreover, if for some $\varepsilon\in (0,1)$, the sub-level set 
$\Omega(\Theta,\varepsilon)$ is connected, then $(i)\Longleftrightarrow (ii)\Longleftrightarrow (iii)$. 
%
\end{Theorem}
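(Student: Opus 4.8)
The plan is to establish the unconditional chain $(i)\Rightarrow(ii)\Rightarrow(iii)$ first and then, under the extra hypothesis that some $\Omega(\Theta,\varepsilon)$ is connected, to close the loop by proving $(iii)\Rightarrow(i)$, which together with the forward chain gives all three equivalences. Throughout I would use that the reproducing kernel of $\K_{\Theta}$ is $k_\lambda^\Theta(z)=(1-\overline{\Theta(\lambda)}\Theta(z))/(1-\overline{\lambda}z)$, with $\|k_\lambda^\Theta\|_m^2=k_\lambda^\Theta(\lambda)=(1-|\Theta(\lambda)|^2)/(1-|\lambda|^2)$, and the fact recorded after the Aleksandrov density result that any $\mu\in M_+(\D^-)$ satisfying $\|f\|_\mu\lesssim\|f\|_m$ on $\K_{\Theta}\cap C(\D^-)$ is automatically $\Theta$-admissible and the embedding extends to all of $\K_{\Theta}$.

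The implication $(ii)\Rightarrow(iii)$ is the routine one: if $\mu$ is Carleson then $\|f\|_\mu\lesssim\|f\|_m$ for every $f\in\K_{\Theta}$, and testing this on the kernels $k_\lambda^\Theta$ gives the reproducing-kernel bound $\|k_\lambda^\Theta\|_\mu\lesssim\|k_\lambda^\Theta\|_m$; admissibility comes for free from the remark just quoted. The analytic heart is $(i)\Rightarrow(ii)$, which is Cohn's embedding. I would fix $f\in\K_{\Theta}\cap C(\D^-)$, take a Whitney decomposition of the windows, and separate them into two families. For windows $S_I$ that do not meet $\Omega(\Theta,\varepsilon)$ one has $|\Theta|\geq\varepsilon$ on $S_I$, and $I$ is then disjoint from $\sigma(\Theta)$; the analytic continuation of $\K_\Theta$ functions off $\sigma(\Theta)$ (Moeller) together with the evaluation bound \eqref{qwyw66xb111} controls the contribution of $\mu$ there, exactly as in the mechanism of Proposition \ref{t40proeihkqwj}. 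For windows meeting $\Omega(\Theta,\varepsilon)$, condition (i) is precisely $\mu(S_I)\lesssim|I|$, and here I would invoke Cohn's pointwise estimate, bounding $|f(z)|^2$ for $z$ near such a window by a Poisson- or maximal-type average of $|f|^2$ over an arc of length comparable to $1-|z|$, thereby reducing the claim to the ordinary $H^2$ Carleson embedding (Theorem \ref{Carleson-revised}) applied on exactly these windows. Summing over the decomposition yields $\|f\|_\mu\lesssim\|f\|_m$, and Aleksandrov density extends it from $\K_{\Theta}\cap C(\D^-)$ to all of $\K_{\Theta}$.

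For the converse $(iii)\Rightarrow(i)$ I would test the reproducing-kernel inequality on well-chosen points. Given an arc $I$ with $S_I\cap\Omega(\Theta,\varepsilon)\neq\varnothing$, I want a point $\lambda$ with $|\Theta(\lambda)|<\varepsilon$ lying over $I$ at height $1-|\lambda|\asymp|I|$. For such a $\lambda$ one has $|1-\overline{\Theta(\lambda)}\Theta(z)|\geq 1-\varepsilon$ and $|1-\overline{\lambda}z|\asymp|I|$ throughout $S_I$, so $|k_\lambda^\Theta(z)|^2\gtrsim|I|^{-2}$ on $S_I$ and hence $\|k_\lambda^\Theta\|_\mu^2\gtrsim|I|^{-2}\mu(S_I)$; combining this with $\|k_\lambda^\Theta\|_\mu^2\lesssim\|k_\lambda^\Theta\|_m^2\asymp|I|^{-1}$ produces $\mu(S_I)\lesssim|I|$, which is (i).

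The main obstacle, and the sole reason the connectedness hypothesis is invoked, is the production of this test point at the correct scale. A priori $\Omega(\Theta,\varepsilon)$ might meet $S_I$ only in a thin sliver pressed against $\T$, at height $\ll|I|$, where $k_\lambda^\Theta$ concentrates on a window far smaller than $S_I$ and so cannot see all of $\mu(S_I)$. Connectedness of $\Omega(\Theta,\varepsilon)$ is exactly what lets one run a Harnack/maximum-principle chain inside the sub-level set to propagate the smallness of $|\Theta|$ upward and locate a point of $\Omega$ at height comparable to $|I|$ over a comparable arc. I expect this scale-matching step to be the crux of the whole argument, with Cohn's pointwise inequality in the forward direction being the main analytic input.
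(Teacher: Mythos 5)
First, a framing remark: the survey does not prove Theorem \ref{7775w882727} --- it is imported from Cohn \cite{Cohn} and Treil--Volberg \cite{TV} --- so your proposal can only be measured against those arguments and against the machinery the paper does develop (the appendix and the discussion around \eqref{A-emb}). Your architecture (prove $(i)\Rightarrow(ii)\Rightarrow(iii)$ unconditionally, then close the loop under (CLS) by testing on kernels based at points of the sub-level set) is the right one, $(ii)\Rightarrow(iii)$ is indeed routine, and your kernel computation in the last step ($|k^\Theta_\lambda|^2\gtrsim|I|^{-2}$ on $S_I$ from $|\Theta(\lambda)|\leqslant\varepsilon'<1$ and $1-|\lambda|\asymp|I|$, against $\|k^\Theta_\lambda\|_m^2\leqslant(1-|\lambda|)^{-1}\asymp|I|^{-1}$) is correct. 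The gap in $(i)\Rightarrow(ii)$ is this: the windows disjoint from $\Omega(\Theta,\varepsilon)$ cannot be handled by ``the mechanism of Proposition \ref{t40proeihkqwj}.'' The bound \eqref{qwyw66xb111} is uniform only on compact subsets of $\D^{-}\setminus\sigma(\Theta)$, whereas the union of all windows missing $\Omega(\Theta,\varepsilon)$ accumulates at $\sigma(\Theta)$, where $\|E_\xi\|=\sqrt{|\Theta'(\xi)|}$ blows up. What is actually needed is the quantitative estimate $|\Theta'(\xi)|\lesssim d(\xi,\Omega(\Theta,\varepsilon))^{-1}$ plus a Whitney decomposition of $\T\setminus\sigma(\Theta)$ adapted to $d_\varepsilon$ and a summation that feeds back into hypothesis (i) on neighbouring windows that \emph{do} meet $\Omega$ --- precisely the Bernstein-weight apparatus of \cite{Baranov-JFA05} that the appendix uses for the reverse embedding. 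Similarly, ``Cohn's pointwise estimate reduces the claim to the ordinary $H^2$ embedding on these windows'' is where the entire content of the sufficiency theorem lives: (i) says nothing about windows missing $\Omega$, and $\mu$ need not be $H^2$-Carleson at all (it may even carry point masses on $\T\setminus\sigma(\Theta)$), so there is no genuine reduction to Theorem \ref{Carleson-revised}; the actual proofs go through a harmonic-majorant/Green's-formula argument or the Bernstein route.

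For $(iii)\Rightarrow(i)$ you correctly isolate the crux --- producing $\lambda$ with $|\Theta(\lambda)|\leqslant\varepsilon'<1$ at height comparable to $|I|$ over a bounded amplification of $I$ --- but the proposed resolution does not yet constitute a proof. Schwarz--Pick gives $|\Theta|\leqslant(\varepsilon+r)/(1+\varepsilon r)<1$ on the hyperbolic $r$-neighbourhood of $\Omega(\Theta,\varepsilon)$ with no connectedness needed at all; the problem is that the hyperbolic distance from a point $z_0\in S_I\cap\Omega(\Theta,\varepsilon)$ at height $h\ll|I|$ up to height $|I|$ is of order $\log(|I|/h)$, which is unbounded, so no Harnack/Schwarz--Pick chain of bounded length reaches the correct scale. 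Connectedness alone does not obviously prevent the component of $\Omega(\Theta,\varepsilon)$ through $z_0$ from leaving $S_{NI}$ inside a thin layer along $\T$ and only ascending to height $|I|$ over a distant arc, where the kernel no longer sees $\mu(S_I)$. Ruling this scenario out (or circumventing it) is the substantive geometric lemma about connected sub-level sets in \cite{Cohn} and \cite{TV}; your sketch assumes it rather than proves it, and this is exactly the step one would need to supply to make the converse rigorous.
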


The condition that $\Omega(\Theta, \varepsilon)$ is connected for some $\varepsilon  \in (0, 1)$ is often called the {\em connected level set condition} (CLS). Cohn \cite{Cohn} proved that if $\Omega(\Theta, \varepsilon)$ is connected and $\delta \in (\varepsilon, 1)$, then $\Omega(\Theta, \delta)$ is also connected. Any finite Blaschke product, the atomic inner function 
$$\Theta(z) = \exp\left(\frac{z + 1}{z - 1}\right),$$ and the infinite Blaschke product whose zeros are $\{1 - r^n\}_{n \geqslant 1}$, where $0 < r < 1$, satisfy this connected level set condition.

The sufficient condition appearing in assertion $(i)$ of Theorem~\ref{7775w882727} is, in general, not necessary. More precisely, Treil and Volberg \cite{TV} proved that this condition is necessary for the embedding of $\K_\Theta$ into $L^2(\mu)$ if and only if $\Theta\in (CLS)$. Nazarov--Volberg \cite{Nazarov-Volberg} proved that the RKT (reproducing kernel thesis) for Carleson embeddings for $\K_\Theta$ is, in general, not true. In \cite{Baranov-JFA05}, Baranov obtained a significant extension of the Cohn and Volberg--Treil results, introducing a new point of view based on certain Bernstein-type inequalities. Quite recently, in answering a question posed by Sarason~\cite{MR2363975}, Baranov--Besonnov--Kapustin \cite{BKK} clarified a nice link between Carleson measures for $\K_\Theta$ and an interesting class of operators -- the truncated Toeplitz operators -- which have received much attention in the last few years \cite{MR2363975}.

We turn to reverse Carleson measures.
Since the main reverse embedding result for model spaces, or backward shift invariant subspaces, is new in the non Hilbert situation we will state this theorem  for $1<p<+\infty$. 
In this more general situation we need the following definition 
\[
 \K_{\Theta}^p=H^p\cap \Theta \overline{H^p_0},
\]
where $H^p_0=zH^p$ is the space of functions in $H^p$ vanishing at $0$. The above intersection is to be understood on the circle. We will denote $L^p(\mu)=L^p(\overline{\D},\mu)$. 

The reverse embedding theorem goes along the lines of Treil-Volberg for which we need the following additional notation: 
given an arc $I\subset \T$ and a number $n>0$, we define
the amplified arc  $nI$ as the arc with the same center as $I$ but 
with length $n \times m(I)$.

\begin{Theorem}\label{thm:first-reversed-embedding}
Let $\Theta$ be inner, $\mu \in M_{+}(\D^{-})$, and $\varepsilon \in (0, 1)$. There exists an $N = N(\Theta, \varepsilon) > 1$ such that 
if
\begin{equation}\label{eq:reversed-condition-first}
 \mu(S_{I}) \gtrsim m(I)
\end{equation}
for all arcs
$I \subset \T$ satisfying  
$$S_{N I} \cap \Omega(\Theta,\varepsilon)\neq\varnothing,$$
then 
\begin{equation}\label{eq:reversed-equation}
\|f\|_{L^p(m)} \lesssim \|f\|_{L^p(\mu)} \qquad \forall f \in \K_{\Theta}^p\cap C(\D^{-}).
\end{equation}
\end{Theorem}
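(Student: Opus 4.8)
The plan is to run the Treil--Volberg localization scheme in reverse, the new ingredient being an $L^p$ Bernstein-type inequality for $\K_\Theta^p$ that replaces the explicit reproducing-kernel computation available in the Hilbert case treated in \cite{BFGHR}. I work throughout with $f\in\K_\Theta^p\cap C(\D^{-})$, so that $|f|$ is genuinely defined and continuous at every point of $\D^{-}$ and the pointwise estimates below are legitimate; by Aleksandrov's density of $\K_\Theta^p\cap C(\D^{-})$ in $\K_\Theta^p$ this is exactly the class named in the statement.

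First I would pin down the geometry behind the amplification factor $N$. The sublevel set $\Omega(\Theta,\varepsilon)$ fixes a natural scale along $\T$: near a point $\zeta$ there is an arc of length $\ell(\zeta)$ whose window just reaches into $\Omega(\Theta,\varepsilon)$. An arc $I$ is \emph{relevant}, i.e. $S_{NI}\cap\Omega(\Theta,\varepsilon)\neq\varnothing$, precisely when $I$ sits at a scale roughly $N$ times \emph{finer} than this natural scale, and for such $I$ the hypothesis provides the mass bound $\mu(S_I)\gtrsim m(I)$. From the relevant arcs I would extract, by a Vitali/Besicovitch selection, a cover $\{I_j\}$ of $\T$ whose windows $\{S_{I_j}\}$ have bounded overlap, each $m(I_j)$ being a factor $1/N$ of the local natural scale.

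The core is then a one-window reverse estimate combined with an oscillation bound. Since each $S_{I_j}$ is a factor $N$ finer than the natural $\Theta$-scale, the $L^p$ Bernstein inequality forces the oscillation of $|f|$ over $S_{I_j}$ to be small relative to its size, of order $1/N$; writing $\mathrm{osc}_{j}(f)$ for this oscillation we get $\sup_{I_j}|f|\le\inf_{S_{I_j}}|f|+\mathrm{osc}_{j}(f)$, hence
\begin{equation*}
\int_{I_j}|f|^p\,dm\;\lesssim\; m(I_j)\,\inf_{S_{I_j}}|f|^p\;+\;m(I_j)\,\mathrm{osc}_{j}(f)^p .
\end{equation*}
The first term transfers to $\mu$ because $\mu(S_{I_j})\gtrsim m(I_j)$ gives $m(I_j)\,\inf_{S_{I_j}}|f|^p\lesssim\int_{S_{I_j}}|f|^p\,d\mu$. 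Summing over $j$, using bounded overlap, and estimating the accumulated oscillation by the Bernstein inequality yields
\begin{equation*}
\int_{\T}|f|^p\,dm\;\lesssim\;\int_{\D^{-}}|f|^p\,d\mu\;+\;\frac{C}{N^{p}}\,\int_{\T}|f|^p\,dm ,
\end{equation*}
with $C$ independent of $N$ and $f$. Choosing $N$ large enough that $C/N^p<\tfrac12$ lets the error term be absorbed into the left-hand side, which finishes the proof.

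The main obstacle, and the genuinely new point for $1<p<\infty$, is the $L^p$ Bernstein inequality supplying the $O(1/N)$ oscillation control: in the Hilbert case one evaluates and estimates reproducing kernels directly, whereas for general $p$ one must instead establish (or quote) a weighted bound of the form $\|\,w f'\,\|_{L^p}\lesssim\|f\|_{L^p}$ for $\K_\Theta^p$, with weight $w$ comparable to the local scale on the windows in play, and then integrate it against the cover. A secondary difficulty is the covering bookkeeping near $\sigma(\Theta)$, where the natural scale degenerates to $0$; here one leans on the structure of the sublevel sets and on the comparability of $\Omega(\Theta,\delta)$ at nearby levels $\delta$ (a fact going back to Cohn) to keep the overlap of the windows and all implied constants uniform in $f$.
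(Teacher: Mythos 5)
Your overall architecture is in fact the one the paper uses (it is Baranov's argument rather than a literal reversal of Treil--Volberg): a Whitney-type decomposition of $\T\setminus\sigma(\Theta)$ into arcs whose length is a small multiple $\delta$ of their distance to $\Omega(\Theta,\varepsilon)$, a per-window transfer of mass via the hypothesis $\mu(S_I)\gtrsim m(I)$, oscillation control through Baranov's weighted $L^p$ Bernstein inequality with weight $w_r\gtrsim d_\varepsilon$, and absorption of an error of size $\delta^p\|f\|_p^p$ (your $C/N^p$, since the paper takes $N\sim 1/\delta$). But there is a genuine gap: your covering and oscillation scheme lives only on $\T\setminus\sigma(\Theta)$. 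On the boundary spectrum the ``natural scale'' degenerates to zero, the weight $w_r$ has no useful lower bound, and no Bernstein estimate makes the accumulated oscillation small there --- yet $\sigma(\Theta)$ can have positive Lebesgue measure, so $\int_{\sigma(\Theta)}|f|^p\,dm$ cannot be ignored. Your proposed remedy (comparability of the sublevel sets at nearby levels, in the spirit of Cohn) does not address this. The paper closes this hole by a completely different mechanism, borrowed from Queffelec et al.: cover $\sigma(\Theta)$ by arcs $J_k$ whose windows meet $\Omega(\Theta,\varepsilon)$ and which are so short that, by \emph{uniform continuity} of $f$ on $\D^{-}$, the oscillation of $f$ on each $S(J_k)$ is at most $\|f\|_{L^p(\mu)}/A^{1/p}$; comparing the maximum of $|f|$ on $J_k$ with its minimum on $S(J_k)$ then yields $\int_{\sigma(\Theta)}|f|^p\,dm\le (2^p/A)\,\|f\|_{L^p(\mu)}^p$. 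Note that this contribution is dominated directly by the $\mu$-norm rather than absorbed, and that this is the one place where the hypothesis $f\in C(\D^{-})$ is used in an essential way (the threshold in the uniform continuity depends on $\|f\|_{L^p(\mu)}$, which is harmless). Without some such device your inequality does not close.

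A secondary, more technical imprecision: you take the full oscillation of $|f|$ over the window $S_{I_j}$. In the paper the difference $f(u)-f(s)$ is only ever formed between a point $u$ on one quarter of $I_k$ and a point $s$ chosen (by the mean value property for $\int_{S_k^{(j)}}|f|^p\,d\mu$) in the window over a \emph{different} quarter, precisely so that the segment $[s,u]$ makes an angle with $\T$ bounded away from $\pi/2$. That transversality is what permits comparing $\int_{[s,u]}w_r^{-q}\,|dv|$ with the boundary integral $\int_{I_k}w_r^{-q}\,dm$ (together with the monotonicity of $\|k_z^{\Theta}\|$ along radii). The pair of points realizing your $\mathrm{osc}_j(f)$ may be joined by a nearly radial segment, for which this comparison fails and a separate estimate would have to be supplied.
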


This theorem is a more general version than the one appearing in \cite[Theorem 2.1]{BFGHR}, not only in that it works for $p\neq 2$, but also it does not require the (direct) Carleson condition (which is not really needed in the proof). 
It was initially proved in \cite{BFGHR} for (CLS)-inner function using a 
perturbation argument from \cite[Corollary 1.3 and the proof of Theorem 1.1]{Baranov-II}, but Baranov
provided a proof (found in \cite{BFGHR}) based on Bernstein inequalities and which does not require the CLS condition. 
As it turns out, Baranov's proof does not use specific Hilbert space tools and generalizes to the situation
$1<p<+\infty$. The proof of this theorem is reproduced in the appendix. Apart from the natural changes to switch from $p=2$ to general $p$, we also include explicitely an argument from \cite{Queffelec} which was not detailed in the original proof in \cite{BFGHR} in order to show here
that the direct Carleson measure condition is not required.

\begin{Corollary}
Under the hypotheses of Theorem \ref{thm:first-reversed-embedding}, and if, moreover, the
measure $\mu$ is assumed to be $\Theta$-admissible, then \eqref{eq:reversed-equation} extends to
all of $\K_{\Theta}^p$.
\end{Corollary}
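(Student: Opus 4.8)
The plan is to push the inequality from the dense set $\K_{\Theta}^p\cap C(\D^{-})$ to all of $\K_{\Theta}^p$ by a density argument, the role of the $\Theta$-admissibility hypothesis being precisely that it renders $\|f\|_{L^p(\mu)}$ meaningful for every $f\in\K_{\Theta}^p$ (as noted just after the definition of admissibility). First I would dispose of the trivial case: if $\|f\|_{L^p(\mu)}=+\infty$ then \eqref{eq:reversed-equation} holds vacuously, so I may assume $\int_{\D^{-}}|f|^p\,d\mu<\infty$. By Aleksandrov's density theorem (in its $\K_{\Theta}^p$ form) I would then choose $f_n\in\K_{\Theta}^p\cap C(\D^{-})$ with $\|f_n-f\|_{L^p(m)}\to 0$, and apply Theorem~\ref{thm:first-reversed-embedding} to each $f_n$ to obtain one fixed constant $c=c(\Theta,\varepsilon,\mu)$ with $\|f_n\|_{L^p(m)}\leqslant c\,\|f_n\|_{L^p(\mu)}$ for all $n$. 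Since $f_n\to f$ in $L^p(m)$ forces $\|f_n\|_{L^p(m)}\to\|f\|_{L^p(m)}$, the whole matter reduces to controlling the right-hand side in the limit; concretely it is enough to establish
\[
\liminf_{n\to\infty}\|f_n\|_{L^p(\mu)}\lesssim \|f\|_{L^p(\mu)}.
\]

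To analyse the $\mu$-integrals I would split $\mu$ according to admissibility, writing $\mu=\mu|_{\D}+w\,dm+\mu_s$, where $w=d\mu|_{\T}/dm$ and $\mu_s$ is the singular part of $\mu|_{\T}$, which by hypothesis is carried by $\T\setminus\sigma(\Theta)$. On every compact $W\subset\D^{-}\setminus\sigma(\Theta)$ the point evaluations are uniformly bounded by \eqref{qwyw66xb111} (which, through Moeller's analytic continuation, has the same form on $\K_{\Theta}^p$), so norm convergence of $f_n\to f$ upgrades to uniform convergence on $W$, and hence $|f_n|^p\to|f|^p$ uniformly on the part of $\D^{-}\setminus\sigma(\Theta)$ carrying $\mu|_{\D}$ and $\mu_s$. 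On $\sigma(\Theta)$ itself, where $\mu$ is absolutely continuous, I would instead invoke convergence of the radial boundary values in $L^p(m)$. Exhausting $\D^{-}\setminus\sigma(\Theta)$ by the compacta $W_\delta=\{z:\dist(z,\sigma(\Theta))\geqslant\delta\}$ and letting $\delta\to 0$ is the mechanism by which these two regimes are glued together to give $\|f_n\|_{L^p(\mu)}\to\|f\|_{L^p(\mu)}$, whereupon the inequality passes to the limit.

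The main obstacle is exactly this passage $\delta\to 0$, that is, controlling the mass of $\{|f_n|^p\}$ against $\mu$ on the shrinking neighbourhoods $\{z:\dist(z,\sigma(\Theta))<\delta\}$: there the uniform evaluation bound degenerates and $w$ need not be bounded, so uniform convergence is unavailable and a naive application of Fatou's lemma only yields the inequality in the wrong direction. I would resolve this by a uniform-integrability argument that genuinely uses the standing assumption $\int|f|^p\,d\mu<\infty$ together with the fact that, by admissibility, $\mu_s\big(\{z:\dist(z,\sigma(\Theta))<\delta\}\big)\to\mu_s(\sigma(\Theta))=0$ as $\delta\to 0$. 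Choosing the Aleksandrov approximants with the standard domination of $|f_n|$ by a maximal function of $f$ (so that the dominating function is $\mu$-integrable on the relevant region) then lets dominated convergence absorb the boundary contribution and close the argument. This domination step is the only delicate point; everything else is bookkeeping on the three pieces of $\mu$.
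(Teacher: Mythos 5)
Your overall strategy (Aleksandrov density plus a passage to the limit, with $\Theta$-admissibility serving only to make $\|f\|_{L^p(\mu)}$ meaningful) is the natural one, and you correctly reduce the problem to showing $\liminf_n\|f_n\|_{L^p(\mu)}\lesssim\|f\|_{L^p(\mu)}$ after disposing of the case $\|f\|_{L^p(\mu)}=\infty$. You also correctly locate the obstruction: $L^p(m)$-convergence gives no control of the $\mu$-integrals near $\sigma(\Theta)$, where $w=d\mu|_{\T}/dm$ may be unbounded and $\mu|_{\D}$ may concentrate, and Fatou points the wrong way. The problem is that your proposed fix is not available. Aleksandrov's theorem \cite{Alek} asserts density of $\K_{\Theta}\cap C(\D^{-})$ and nothing more; there is no version of it in which the continuous approximants $f_n$ are pointwise dominated by a maximal function of $f$ (the construction, via Clark measures, gives no such pointwise control). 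Moreover, even granting a domination $|f_n|\leqslant C\,Mf$, the argument would still not close: dominated convergence requires $Mf\in L^p(\mu)$, whereas your standing hypotheses only give $f\in L^p(\mu)$ and $Mf\in L^p(m)$; for the part of $\mu$ living on $\D$, or for $w\,dm$ with $w$ unbounded, there is no reason for $Mf$ to be $\mu$-integrable. Likewise, knowing that $\mu_s$ and $\mu|_{\D}$ put vanishing mass on shrinking neighbourhoods of $\sigma(\Theta)$ does not help without uniform integrability of $\{|f_n|^p\}$ against $\mu$, which is exactly what is missing. So the central step --- producing approximants with $\limsup_n\|f_n\|_{L^p(\mu)}\lesssim\|f\|_{L^p(\mu)}$ --- is not actually carried out, and it can fail for a badly chosen approximating sequence.

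For context: the survey states this corollary without proof, and in the source \cite{BFGHR} the underlying theorem carries the additional hypothesis that $\mu$ is a (direct) Carleson measure for the model space; under that hypothesis the density argument closes immediately, since $f_n\to f$ in $L^p(m)$ then forces $f_n\to f$ in $L^p(\mu)$ and both sides of \eqref{eq:reversed-equation} converge. In the present formulation, where the direct embedding is deliberately not assumed, the limit passage is precisely the delicate point; an honest proof must either restore some form of the direct estimate on the relevant region, or rerun the appendix argument directly for a general $f\in\K_{\Theta}^p$ (the mean-value steps take place on the windows $S_k$, which are compact subsets of $\D^{-}\setminus\sigma(\Theta)$ where every function of $\K_{\Theta}^p$ is continuous by \eqref{qwyw66xb111} and Moeller's theorem, so only the uniform-continuity step on $\sigma(\Theta)$ needs replacing). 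As written, your proof has a genuine gap at its key step.
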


 Our second reverse Carleson result involves the notion of a dominating set for $\K_{\Theta}$, defined in \eqref{784504rdsf} and discussed earlier for the Bergman and Fock spaces.

\begin{Definition}\label{74yegsv0oijnc}
A (Lebesgue) measurable subset $\Sigma\subset\mathbb{T}$, with $m\left(\Sigma\right)<1$,
is called a \emph{dominating set} for $\K_{\Theta}$ if 
$$
\int_{\T} |f|^2dm \lesssim \int_{\Sigma}|f|^2 dm \quad \forall f \in \K_{\Theta}.
$$
\end{Definition}

This is equivalent to saying that the measure $d \mu  = \chi_{\Sigma} dm$ is a reverse Carleson measure for $\K_{\Theta}$. Here we list some observations concerning dominating sets for model spaces.  We will use the
following notation for sets $A$, $B$ and a point $x$:
$$
d(A, B) := \inf\{|a - b|: a \in A, b \in B\}, \quad d(x, A) :=d(\{x\},A).
$$

Throughout the list below we will assume that $\Theta$ is inner and $\sigma(\Theta)$ is its
boundary spectrum from \eqref{liminf-zero-set}. 
All of these results can be found in \cite[Section 5]{BFGHR}.

\begin{enumerate}
\item[(i)] If $\Sigma$ is a dominating set for $\K_{\Theta}$ then, 
for every $\zeta\in\sigma(\Theta)$, we have
$d(\zeta,\Sigma)=0$.  
\item[(ii)] If $\Sigma$ is a dominating set for $\K_{\Theta}$ then
$d(\Sigma,\sigma(\Theta))=0$.
\item[(iii)] Let $\zeta\in\sigma(\Theta)$ and $\Sigma$ dominating. Then there exists an
$\alpha>0$ such that for every sequence $\lambda_n\to \zeta$ with 
$\Theta(\lambda_n)
\to 0$, there is an integer $N$ with
\[
 m(\Sigma\cap I_{\lambda_n}^{\alpha})\gtrsim m(I_{\lambda_n}^{\alpha}),
 \quad n\geqslant N.
\]
In the above, $I_{\lambda}^{\alpha}$ is the subarc of $\T$ centered at $\frac{\lambda}{|\lambda|}$ with length $\alpha(1-|\lambda|)$. 
\item[(iv)]  Every open subset  $\Sigma$  of $\T$ such that $\sigma(\Theta)\subset\Sigma$ and $m(\Sigma)<1$ is a dominating set for $\K_{\Theta}$.

\item[(v)] Let $\Theta$ be an inner function such that $m(\sigma(\Theta))=0$. Then for every $\varepsilon \in (0, 1)$ there is a dominating set $\Sigma$ for $\K_{\Theta}$ such that $m(\Sigma)<\varepsilon$. In particular, this is true for (CLS)-inner functions.
\item[(vi)] If $\sigma(\Theta)=\T$ and  if $\Sigma$ is a dominating set for $\K_{\Theta}$ then $\Sigma$ is dense in $\T$.  
\item[(vii)] There exists a Blaschke product $B$ with $\sigma(B)=\T$
and an open subset
$\Sigma\subsetneq\mathbb{T}$ 
dominating 
for $\K_{B}$. 

\item[(vi)] Every model space admits a dominating set.
\end{enumerate}

Theorem \ref{thm:first-reversed-embedding} shows, in the special case of the Paley-Wiener space, that when \eqref{reldense} is satisfied  for sufficiently small $\eta$, then $E$ is a dominating set for $PW$. 


For reverse Carleson measures there is the following result from \cite{BFGHR}. 

\begin{Theorem}\label{thm:second-reversed-embedding}
Let $\Theta$ be an inner function, $\Sigma$ be a dominating set for
$\K_{\Theta}$, and $\mu \in M_{+}(\D^{-})$. 
Suppose that
\[
\inf_{I}\frac{\mu(S_{I})}{m\left(I\right)}>0,
\]
where the above infimum is taken over all arcs $I \subset \T$ such that $I\cap\Sigma\neq\varnothing$.
Then
\begin{equation}\label{reversed-inequality}
\|f\|_m \lesssim \|f\|_{\mu} \qquad \forall f \in \K_{\Theta}\cap C(\D^{-}).
\end{equation}
\end{Theorem}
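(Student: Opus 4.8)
The plan is to decouple the two hypotheses. The domination hypothesis is, by Definition \ref{74yegsv0oijnc}, exactly the statement that $\int_{\T}|f|^2\,dm \lesssim \int_{\Sigma}|f|^2\,dm$ for all $f\in\K_{\Theta}$, so the whole theorem reduces to producing the single inequality
$$\int_{\Sigma}|f|^2\,dm \lesssim \int_{\D^-}|f|^2\,d\mu = \|f\|_{\mu}^2.$$
Since $f\in\K_{\Theta}\cap C(\D^-)$ is continuous on $\D^-$ and the contribution of $\mu$ on the open disk is nonnegative, we have $\|f\|_{\mu}^2 \geqslant \int_{\T}|f|^2\,d\mu|_{\T}$, so this in turn will follow once I establish the measure-theoretic comparison
$$d\mu|_{\T} \geqslant C\,\chi_{\Sigma}\,dm$$
for some $C>0$. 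Thus the entire content of the proof is to convert the localized window condition $C:=\inf\{\mu(S_I)/m(I):I\cap\Sigma\neq\varnothing\}>0$ into this pointwise lower bound on the Radon--Nikodym derivative of $\mu|_{\T}$ against the restricted Lebesgue measure $\chi_{\Sigma}\,dm$.

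To carry this out I would adapt the implication (iii)$\Rightarrow$(iv) in the proof of Theorem \ref{thmHMNO}, with the single modification forced by the fact that only arcs meeting $\Sigma$ are controlled. Fix an arbitrary arc $I$ and subdivide it, as there, into $N$ half-open subarcs $I_k$ of common length $h=m(I)/N$, so that $S_{I,h}=\bigsqcup_{k}S_{I_k,h}$ and $S_{I_k,h}=S_{I_k}$. Discarding the subarcs that miss $\Sigma$ (which only lowers the mass) and applying the hypothesis to the remaining ones gives
$$\mu(S_{I,h}) \geqslant \sum_{k:\,I_k\cap\Sigma\neq\varnothing}\mu(S_{I_k}) \geqslant C\,h\cdot\#\{k:I_k\cap\Sigma\neq\varnothing\}.$$
Because $\Sigma\cap I$ is covered by precisely those subarcs meeting $\Sigma$, one has $m(\Sigma\cap I)\leqslant h\cdot\#\{k:I_k\cap\Sigma\neq\varnothing\}$, whence $\mu(S_{I,h})\geqslant C\,m(\Sigma\cap I)$ uniformly in $h$. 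Letting $h\to0$ and invoking the outer regularity of $\mu$ exactly as in Theorem \ref{thmHMNO} yields $\mu|_{\T}(I)\geqslant C\,m(\Sigma\cap I)$ for every arc $I\subset\T$.

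Finally I would upgrade this arc-by-arc bound to the claimed comparison of measures: since any open subset of $\T$ is a countable disjoint union of arcs, the inequality passes to open sets, and then by outer regularity of both $\mu|_{\T}$ and $\chi_{\Sigma}\,dm$ to all Borel sets, giving $d\mu|_{\T}\geqslant C\,\chi_{\Sigma}\,dm$. Integrating $|f|^2$ and chaining with the domination inequality then closes the argument. The main obstacle --- and the only place where the proof genuinely departs from the $H^2$ case --- is this subdivision step: when the window condition holds for \emph{every} arc one may sum over all subarcs and recover $C\,m(I)$, but here one can only sum over the subarcs that meet $\Sigma$, and the crux is the observation that this restricted count reconstructs $m(\Sigma\cap I)$ rather than $m(I)$, which is exactly what feeds into the domination estimate. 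Everything else --- the passage $h\to0$ through outer regularity and the extension from arcs to Borel sets --- is routine bookkeeping.
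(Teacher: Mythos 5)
Your proof is correct. The survey does not reproduce a proof of this theorem (it refers to \cite{BFGHR}), but your argument --- localizing the subdivision and outer-regularity step of (iii)$\Rightarrow$(iv) in Theorem \ref{thmHMNO} to the subarcs meeting $\Sigma$, so that the restricted count yields $\mu|_{\T}\geqslant C\,\chi_{\Sigma}\,dm$, and then chaining with the domination inequality --- is exactly the adaptation the paper advertises when it says the ideas of Theorem \ref{thmHMNO} carry over to sub-Hardy Hilbert spaces, and it matches the proof in \cite{BFGHR}.
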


\begin{Corollary}
Under the hypotheses of Theorem \ref{thm:second-reversed-embedding}, and if moreover the
measure $\mu$ is assumed to be $\Theta$-admissible, then the inequality in \eqref{reversed-inequality} extends to all of $\K_{\Theta}$.
\end{Corollary}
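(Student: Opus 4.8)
The plan is to remove the continuity requirement in \eqref{reversed-inequality} by an approximation argument, combining Aleksandrov's density theorem with the hypothesis that $\mu$ is $\Theta$-admissible. First recall that, by Aleksandrov's theorem \cite{Alek}, $\K_\Theta\cap C(\D^-)$ is dense in $\K_\Theta$ in the $\|\cdot\|_m$ norm, and that $\Theta$-admissibility guarantees that $\|f\|_\mu$ is a well-defined element of $[0,+\infty]$ for every $f\in\K_\Theta$: the singular part of $\mu|_\T$ is carried by $\T\setminus\sigma(\Theta)$, where every function of $\K_\Theta$ extends analytically (Moeller), while on the remainder of $\T$ one uses the $m$-a.e.\ radial limits. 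Since $\|f\|_m\le c\|f\|_\mu$ is trivial when $\|f\|_\mu=+\infty$, I would fix $f\in\K_\Theta$ with $\|f\|_\mu<\infty$ and choose $g_n\in\K_\Theta\cap C(\D^-)$ with $g_n\to f$ in $\|\cdot\|_m$.

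The second step is to record the pointwise convergence of the approximants with respect to $\mu$. Convergence in $H^2$ gives $g_n\to f$ uniformly on compact subsets of $\D$; moreover, by the uniform boundedness of the evaluation functionals $E_\xi$ on compact subsets of $\T\setminus\sigma(\Theta)$ recorded in \eqref{qwyw66xb111}, one has $|g_n(\xi)-f(\xi)|\le\|E_\xi\|\,\|g_n-f\|_m\to0$ for each $\xi\in\T\setminus\sigma(\Theta)$. Passing to a subsequence, one also gets $g_n\to f$ $m$-a.e.\ on $\T$. Because $\mu$ is $\Theta$-admissible, these three facts combine to give $g_n\to f$ pointwise $\mu$-a.e.\ on $\D^-$ (on $\D$, on the singular part of $\mu|_\T$ which lives in $\T\setminus\sigma(\Theta)$, and $m$-a.e.\ for the absolutely continuous part of $\mu|_\T$).

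Now the inequality $\|g_n\|_m\le c\|g_n\|_\mu$ together with $\|g_n\|_m\to\|f\|_m$ reduces everything to showing $\liminf_n\|g_n\|_\mu\le\|f\|_\mu$. This is the crux, and the main obstacle, because Fatou's lemma only yields the opposite inequality $\|f\|_\mu\le\liminf_n\|g_n\|_\mu$; what is needed is an upper (reverse-Fatou, or dominated) control of the $\mu$-masses of the $g_n$. I would first dispose of the case of bounded $f\in\K_\Theta\cap H^\infty$: here one can arrange the approximants to be uniformly bounded (a standard strengthening of Aleksandrov's theorem for bounded $f$), and then, since $\mu$ is a finite measure and $g_n\to f$ $\mu$-a.e., dominated convergence gives $\|g_n\|_\mu\to\|f\|_\mu$, whence $\|f\|_m\le c\|f\|_\mu$.

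The remaining, and genuinely delicate, point is the passage from bounded $f$ to arbitrary $f\in\K_\Theta$ with $\|f\|_\mu<\infty$. The difficulty is that the hypotheses of Theorem~\ref{thm:second-reversed-embedding} only bound $\mu(S_I)/m(I)$ from below, so the absolutely continuous density $d\mu|_\T/dm$ may be unbounded; consequently $H^2$-convergence of the approximants does not by itself control their $\mu$-norms, and the naive limit can fail on the absolutely continuous boundary part. I expect this to be where the real work lies, and would resolve it by producing approximants $g_n\in\K_\Theta\cap C(\D^-)$ converging to $f$ \emph{simultaneously} in $\|\cdot\|_m$ and in $L^2(\mu)$, using the $L^2(\mu)$-approximation afforded by the Aleksandrov--Clark machinery (equivalently, the construction used in \cite{BFGHR}); once $g_n\to f$ in $L^2(\mu)$ one has $\|g_n\|_\mu\to\|f\|_\mu$, the limit in $\|g_n\|_m\le c\|g_n\|_\mu$ passes directly, and the inequality holds on all of $\K_\Theta$.
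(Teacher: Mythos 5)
You have located the genuinely hard point of the statement correctly---admissibility makes $\|f\|_{\mu}$ well defined, Aleksandrov's theorem supplies $H^2$-approximants, and Fatou's lemma only gives $\|f\|_{\mu}\le\liminf_n\|g_n\|_{\mu}$, which is the wrong direction---but the step on which your whole argument rests is precisely the one you do not prove. Producing approximants $g_n\in\K_{\Theta}\cap C(\D^{-})$ converging to $f$ simultaneously in $L^2(m)$ and in $L^2(\mu)$ is not something the Aleksandrov--Clark machinery supplies: the Clark construction furnishes an isometry from $L^2$ of the \emph{Clark measures of $\Theta$} onto $\K_{\Theta}$, not from $L^2(\mu)$ for an arbitrary $\mu$ satisfying the hypotheses of Theorem \ref{thm:second-reversed-embedding}; and since no direct Carleson condition is assumed, convergence in $H^2$ gives no control on $\|g_n-f\|_{\mu}$ (the density $d\mu|_{\T}/dm$ may be unbounded and $\mu$ may charge $\D$ arbitrarily close to $\T$). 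The intermediate step for bounded $f$ also leans on an unproved refinement of Aleksandrov's theorem (uniformly bounded continuous approximants), and you give no mechanism for passing from bounded to general $f$ inside $\K_{\Theta}$. So the proposal has a genuine gap at its crux.

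The gap is closed by abandoning approximation altogether and running the argument of (iii)$\Rightarrow$(iv)$\Rightarrow$(i) in Theorem \ref{thmHMNO}, which the paper explicitly flags as reusable for sub-Hardy spaces. At a Lebesgue density point $\zeta$ of $\Sigma$, subdivide a small arc $I$ centered at $\zeta$ into $N$ equal subarcs $I_k$ of length $h$; all but at most $\varepsilon N$ of them meet $\Sigma$ and hence satisfy $\mu(S_{I_k})\ge c\,m(I_k)$, so $\mu(S_{I,h})\ge(1-\varepsilon)c\,m(I)$, and outer regularity yields $\mu(\overline{I})\ge(1-\varepsilon)c\,m(I)$. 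Since the singular part of $\mu|_{\T}$ has vanishing symmetric derivative $m$-a.e., this forces $d\mu|_{\T}/dm\ge c$ $m$-a.e.\ on $\Sigma$. Once this is known, no limiting procedure is required: for \emph{every} $f\in\K_{\Theta}$ the $\Theta$-admissibility of $\mu$ makes $\|f\|_{\mu}$ well defined in $[0,+\infty]$, and
\[
\|f\|_{\mu}^2\;\ge\;\int_{\T}|f|^2\,\frac{d\mu|_{\T}}{dm}\,dm\;\ge\;c\int_{\Sigma}|f|^2\,dm\;\gtrsim\;\|f\|_{m}^2,
\]
the last step being the dominating-set property of $\Sigma$. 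Admissibility thus enters only to make the left-hand side meaningful, which is exactly the role the corollary assigns to it; the inequality itself never needs to be transported from the continuous functions to general $f$ by a limit.
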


For the Hardy space, the reverse Carleson measures were characterized by the reverse reproducing kernel thesis, i.e., 
$\|k_{\lambda}\|_{m} \lesssim \|k_{\lambda}\|_{\mu}$ for all $\lambda \in \D$. For model spaces, however,  the reverse reproducing kernel thesis is a spectacular failure \cite{HMNOC}. 

\begin{Theorem}
Let $\Theta$ be an inner function that is not a finite Blaschke product.
Then there exists a measure $\mu \in M_{+}(\T)$ such that $\mu$ is a Carleson measure for $\K_{\Theta}$, the reverse estimate on reproducing kernels
$k^\Theta_\lambda$,
$$
\label{1}
\|k^\Theta_\lambda\|_{\mu} \gtrsim \|k^\Theta_\lambda\|_m
\qquad \forall \lambda\in \mathbb{D},
$$
is satisfied, 
but $\mu$ is not a reverse Carleson measure for $\K_{\Theta}$.
\end{Theorem}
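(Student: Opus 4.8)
The plan is to recast the three conditions as statements about the embedding operator and then to exhibit the gap between a \emph{pointwise} (kernel) lower bound and a genuine lower bound on all of $\K_{\Theta}$. Writing $K_{\lambda}^{\Theta}=k_{\lambda}^{\Theta}/\|k_{\lambda}^{\Theta}\|_{m}$ for the normalized reproducing kernel, with $\|k_{\lambda}^{\Theta}\|_{m}^{2}=(1-|\Theta(\lambda)|^{2})/(1-|\lambda|^{2})$, the reverse kernel estimate is exactly the assertion that the Berezin transform $\lambda\mapsto\|K_{\lambda}^{\Theta}\|_{\mu}^{2}=\int_{\D^{-}}|K_{\lambda}^{\Theta}|^{2}\,d\mu$ is bounded below on $\D$, whereas $\mu$ being a reverse Carleson measure is the formally stronger assertion that the associated (positive, self-adjoint) truncated Toeplitz embedding operator $A_{\mu}^{\Theta}$, i.e.\ the embedding $\K_{\Theta}\hookrightarrow L^{2}(\mu)$, is bounded below on the whole space. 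Since $\Theta$ is not a finite Blaschke product, $\K_{\Theta}$ is infinite dimensional and $\sigma(\Theta)\neq\varnothing$, so there is room for these two conditions to diverge; the whole task is to build a single $\mu\in M_{+}(\T)$ that is Carleson (so $A_{\mu}^{\Theta}$ is bounded), keeps its Berezin transform uniformly positive, and yet is not bounded below.

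For the base measure I would start from the isometric (Aleksandrov--Clark) measures mentioned earlier, for which $\|f\|_{\mu}=\|f\|_{m}$ for every $f\in\K_{\Theta}$; these are simultaneously Carleson and reverse Carleson, so the idea is to degrade the lower bound without destroying the kernel control. The mechanism I would exploit is the dichotomy recorded in the dominating-set observations (i)--(iii): both a reverse Carleson bound and the reverse kernel bound force $\mu$ to approach every point of $\sigma(\Theta)$ (for $\lambda\to\xi\in\sigma(\Theta)$ with $\Theta(\lambda)\to0$ the kernel $K_{\lambda}^{\Theta}$ concentrates near $\xi$), but a reverse Carleson bound must additionally control \emph{all} functions, which imposes a genuine relative-density condition near $\sigma(\Theta)$ of the type in observation (iii), whereas the kernel bound only needs $\mu$ to ``see'' the single-point-concentrating kernels $K_{\lambda}^{\Theta}$. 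Concretely, I would design $\mu$ (for instance as $w\,dm$ with $0\leqslant w\leqslant 1$, or as a restriction of a Clark measure) to hug $\sigma(\Theta)$ just densely enough that $\|K_{\lambda}^{\Theta}\|_{\mu}\gtrsim\|K_{\lambda}^{\Theta}\|_{m}$ holds for every $\lambda\in\D$, but with thin gaps through which a sequence of honest functions $g_{n}\in\K_{\Theta}$, with $\|g_{n}\|_{m}=1$ and $\|g_{n}\|_{\mu}\to0$, can escape. The escaping sequence should be taken weakly null (built from an orthonormal sequence, or from functions supported essentially off $\mathrm{supp}\,\mu$), so that it witnesses the failure of the lower bound while every fixed kernel stays controlled.

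The heart of the matter --- and the step I expect to be the main obstacle --- is this decoupling: verifying the uniform lower bound $\|K_{\lambda}^{\Theta}\|_{\mu}\gtrsim\|K_{\lambda}^{\Theta}\|_{m}$ for all $\lambda\in\D$ \emph{simultaneously} with the existence of the escaping sequence. The difficulty is structural, since the reproducing kernels are total in $\K_{\Theta}$, so controlling all of them ``almost'' controls every function; the escape can survive only because a kernel $K_{\lambda}^{\Theta}$ concentrates its mass near the single boundary point that $\lambda$ approaches, whereas a well-chosen $g_{n}$ distributes its mass across the thin complement of $\mathrm{supp}\,\mu$. Making this quantitative requires the Poisson-kernel asymptotics of $|K_{\lambda}^{\Theta}|^{2}$ near $\T$ together with the angular-derivative behaviour of $\Theta$ on $\sigma(\Theta)$, and the most delicate regimes are the kernels $K_{\lambda}^{\Theta}$ with $\lambda$ approaching $\T\setminus\sigma(\Theta)$ or lying deep inside $\D$, for which I would have to check that the prescribed gaps never swallow a definite fraction of the kernel's mass. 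Once the kernel lower bound is secured across these regimes and the weakly null escaping sequence is produced in the gaps, the three conclusions --- Carleson, reverse kernel bound, and failure of the reverse Carleson inequality --- follow at once.
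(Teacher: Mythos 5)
Your proposal correctly isolates the structural reason the theorem can be true -- a reproducing kernel $K_{\lambda}^{\Theta}$ concentrates its $L^{2}$ mass near a single boundary point, so a measure can ``see'' every kernel while still missing some genuinely spread-out function -- but it stops exactly where the proof has to begin. You write that the simultaneous verification of the uniform kernel lower bound and the existence of the escaping sequence is ``the main obstacle,'' and you never resolve it: no measure is actually constructed, no escaping sequence is exhibited, and the proposed mechanism (``thin gaps'' in a weight $w\,dm$ or in a Clark measure) is left entirely qualitative. This is a genuine gap, not a stylistic one, because for absolutely continuous $\mu = w\,dm$ with bounded $w$ the measure is automatically Carleson, so by Volberg's theorem (Theorem \ref{thm:Volberg}) the reverse Carleson property is governed by $\inf_{\lambda}(\widehat{w}(\lambda)+|\Theta(\lambda)|)$, and it is far from clear that one can open gaps that kill this infimum while preserving $\|K_{\lambda}^{\Theta}\|_{\mu}\gtrsim 1$ for \emph{all} $\lambda$; the regimes you yourself flag as delicate ($\lambda$ deep in $\D$, or approaching $\T\setminus\sigma(\Theta)$) are precisely where the argument is missing.

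The paper (in the Paley--Wiener model, where $\K_{\Theta}\cong PW$) closes this gap with a device your sketch does not contain: take a complete interpolating (minimal sampling) sequence -- the Kadets--Ingham perturbation $x_{n}=n\pm 1/8$ together with $0$ -- and \emph{delete one point}, setting $\mu=\sum_{n\neq 0}\delta_{x_{n}}$. Minimality guarantees that the remaining sequence is no longer sampling (indeed the generating function of the full sequence, suitably normalized, lies in $PW$ and vanishes on all of $\operatorname{supp}\mu$, so the lower bound fails spectacularly), while separation plus relative density of the surviving points gives both the Carleson property and the uniform lower bound $\|K_{\lambda}\|_{\mu}\gtrsim 1$ by an explicit two-case estimate on the sinc kernels ($|\Im\lambda|>1$ versus $|\Im\lambda|\leqslant 1$, using the nearest node $x_{n_{0}}$). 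In other words, the decoupling you identify as the obstacle is achieved not by carving gaps into a continuous measure but by exploiting the rigidity of complete interpolating sequences: removing a single atom destroys the global lower bound without perturbing any kernel estimate. Without this (or an equivalent) concrete construction, your argument does not yet prove the statement.
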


Let us see this counterexample worked out in the special case of the Paley-Wiener space $PW$, which, recall from our earlier discussion, is isometrically isomorphic to the model space $\K_{\Theta}$
with 
$$\Theta(z) = \exp\big(2 \pi \frac{z + 1}{z - 1}\big).$$
Consider the sequence $S=\{x_n\}_{n\in\Z\setminus\{0\}}$, where
\[
 x_n=
 \begin{cases}
  n+1/8\ &\textrm{if $n$ is even}\\
  n-1/8\ &\textrm{if $n$ is odd.}
 \end{cases}
\]
By the Kadets-Ingham theorem \cite[Theorem D4.1.2]{Nik}, $S$
is a minimal sampling (or complete interpolating) sequence if we include the point $0$. Since $S$ is not
sampling, the
discrete measure 
$$\mu:=\sum_{n\neq 0}\delta_{x_n}$$
does not satisfy the reverse inequality 
$$\|f\|_{L^2(\R)}\lesssim \|f\|_{L^2(\mu)} \quad \forall f \in PW.$$

However, the $L^2(\mu)$-norm of the normalized
reproducing kernels
\[
 K_{\lambda}(z)=c_\lambda
\sinc(\pi(z-\lambda))=c_\lambda\frac{\sin(\pi(z-\lambda))}{
\pi(z-\lambda)},\qquad c_\lambda^2\simeq (1+ |\Im\lambda|)e^{-2\pi |\Im \lambda|},
\]
is uniformly bounded from below. Indeed, if $\lambda$ is such that $|\Im
\lambda|>1$ then
$$|\sin(\pi(x_n-\lambda))|\simeq e^{\pi|\Im \lambda|},$$ and hence
\[
\int_{\C} |K_\lambda(x)|^2 d\mu(x)
 =\sum_{n\neq 0} c_{\lambda}^2\left|\frac{\sin(\pi(x_n-\lambda))}{\pi(x_n-\lambda)}
 \right|^2\simeq
\sum_{n\neq 0} \frac
{|\Im\lambda|}{|x_n-\lambda|^2}\simeq 1.
\]
Thus it is enough to consider points $\lambda\in\C$ with $|\Im\,
\lambda|\leqslant 1$. Let $x_{n_0}$ be the point of $S$ closest to
$\lambda$. Then there is $\delta>0$, independent of $\lambda$, such
that
\[
 \int_{\C} |K_\lambda(x)|^2 d\mu(x)
 =\sum_{n\neq 0} |K_\lambda (x_n)|^2
 \geqslant \left|\frac{\sin(\pi(x_{n_0}-\lambda))}{\pi(x_{n_0}-\lambda)}\right|^2\geqslant \delta.
\]
It is interesting to point out that $\mu$ is a Carleson measure for $PW$ since $S$ is in a strip and separated.

As was asked for the Paley-Wiener space $PW$, what are the $\mu \in M_{+}(\T)$ for which 
$$\|f\|_{m} \asymp \|f\|_{\mu} \quad \forall f \in \K_{\Theta}?$$ In \cite{Volberg-81} Volberg generalized the previous results and gave a complete answer for general model spaces and absolutely continuous measures $d \mu = w dm$, where $w \in L^{\infty}(\mathbb{T})$, $w \geqslant 0$. Let
\[
 \widehat w(z)=\int_{\T}w(\zeta)\frac{1-|z|^2}{|z-\zeta|^2}\,dm(\zeta),\qquad z\in\D,
\]
be the Poisson integral of $w$ and note that $\widehat w$ is harmonic (and positive) on $\D$ and has radial boundary values equal to $w$ $m$-almost everywhere \cite{Duren}. 

\begin{Theorem}\label{thm:Volberg} Let $d\mu=wdm$, with $w\in L^\infty(\T)$, $w \geqslant  0$, and let $\Theta$ be an inner function. Then the following assertions are equivalent:
\begin{itemize}
\item[(i)]  $\|f\|_{m} \asymp \|f\|_{\mu}$ for all $f \in \K_{\Theta}$;
\item[(ii)] if $\{\lambda_n\}_{n\geqslant 1}\subset\D$, then 
\[
\lim_{n\to \infty}\widehat w(\lambda_n)=0 \implies \lim_{n\to \infty}|\Theta(\lambda_n)|=1;
\]
\item[(iii)]  
$
\inf \{\widehat w(\lambda)+|\Theta(\lambda)|: \lambda \in \D\}>0.
$
\end{itemize}
\end{Theorem}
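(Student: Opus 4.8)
The plan is to reduce everything to testing on reproducing kernels and then to isolate the one genuinely hard implication. We may assume $w\not\equiv 0$ (otherwise $\widehat w\equiv 0$ and, $\Theta$ being nonconstant, (iii) and (i) both fail). Since $w\in L^\infty(\T)$ we always have $\|f\|_\mu\le \|w\|_\infty^{1/2}\|f\|_m$, so the direct embedding is automatic and (i) is equivalent to the single reverse inequality $\|f\|_m\lesssim\|f\|_\mu$ on $\K_\Theta$. The backbone is an exact computation with the normalized kernels $K_\lambda^\Theta:=k_\lambda^\Theta/\|k_\lambda^\Theta\|_m$, where $k_\lambda^\Theta(z)=(1-\overline{\Theta(\lambda)}\Theta(z))/(1-\bar\lambda z)$ and $\|k_\lambda^\Theta\|_m^2=(1-|\Theta(\lambda)|^2)/(1-|\lambda|^2)$. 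Using $|\Theta|=1$ and $(1-|\lambda|^2)/|1-\bar\lambda\zeta|^2=P_\lambda(\zeta)$ $m$-a.e.\ on $\T$ and expanding $|1-\overline{\Theta(\lambda)}\Theta(\zeta)|^2=1+|\Theta(\lambda)|^2-2\Re(\overline{\Theta(\lambda)}\Theta(\zeta))$, one gets
$$\|K_\lambda^\Theta\|_\mu^2=\frac{(1+|\Theta(\lambda)|^2)\,\widehat w(\lambda)-2\Re\!\big(\overline{\Theta(\lambda)}\textstyle\int_\T \Theta\,w\,P_\lambda\,dm\big)}{1-|\Theta(\lambda)|^2}.$$
Bounding $|\int_\T \Theta w P_\lambda\,dm|\le\widehat w(\lambda)$ (as $|\Theta|=1$, $w\ge 0$) yields the two-sided estimate
$$\frac{1-|\Theta(\lambda)|}{1+|\Theta(\lambda)|}\,\widehat w(\lambda)\;\le\;\|K_\lambda^\Theta\|_\mu^2\;\le\;\frac{1+|\Theta(\lambda)|}{1-|\Theta(\lambda)|}\,\widehat w(\lambda),$$
which drives the argument.

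The implication (i) $\Rightarrow$ (ii) then falls out of the upper bound: if $\widehat w(\lambda_n)\to 0$ while $|\Theta(\lambda_n)|\to c<1$ along some sequence, the upper bound forces $\|K_{\lambda_n}^\Theta\|_\mu^2\to 0$, whereas (i) applied to the unit vectors $K_{\lambda_n}^\Theta$ gives $\|K_{\lambda_n}^\Theta\|_\mu^2\gtrsim 1$, a contradiction; hence $\widehat w(\lambda_n)\to 0$ must force $|\Theta(\lambda_n)|\to 1$. The implication (ii) $\Rightarrow$ (iii) is elementary: if (iii) fails there is a sequence with $\widehat w(\lambda_n)+|\Theta(\lambda_n)|\to 0$, so $\widehat w(\lambda_n)\to 0$ yet $|\Theta(\lambda_n)|\to 0\neq 1$, contradicting (ii). (The same upper bound gives $\neg$(iii)$\Rightarrow\neg$(i) directly as well.) Thus the only substantial point is (iii) $\Rightarrow$ (i).

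For (iii) $\Rightarrow$ (i), set $\delta=\inf_\D(\widehat w+|\Theta|)>0$ and $\Sigma:=\{\zeta\in\T:w(\zeta)\ge\delta/4\}$. The plan is to prove that $\Sigma$ is a \emph{dominating set} for $\K_\Theta$ in the sense of Definition \ref{74yegsv0oijnc}, for then $\int_\T|f|^2w\,dm\ge\tfrac{\delta}{4}\int_\Sigma|f|^2\,dm\gtrsim\|f\|_m^2$ and we are done. Condition (iii) gives $\widehat w\ge\delta/2$ on the sublevel set $\Omega(\Theta,\delta/2)$, hence the Poisson average of $w$ is bounded below at every $\lambda\in\D$ approaching $\sigma(\Theta)$ through $\{|\Theta|<\delta/2\}$. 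Splitting $\widehat w(\lambda)=\int_\Sigma wP_\lambda+\int_{\T\setminus\Sigma}wP_\lambda\le\|w\|_\infty\int_\Sigma P_\lambda+\delta/4$ forces $\int_\Sigma P_\lambda\,dm\ge\delta/(4\|w\|_\infty)\gtrsim 1$ at such $\lambda$, and since $P_\lambda$ concentrates on the arc $I_\lambda$ of length $\asymp 1-|\lambda|$ about $\lambda/|\lambda|$, this is precisely the relative-density-near-$\sigma(\Theta)$ condition (item (iii) of the dominating-set list).

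I expect the conversion of this relative density into the dominating inequality to be the main obstacle: it is the true reverse-embedding step and cannot be obtained from the soft kernel estimate alone, since the reverse reproducing kernel thesis genuinely fails for model spaces. I would carry it out through the Treil--Volberg/Baranov circle of ideas behind Theorem \ref{thm:second-reversed-embedding}, taking the input dominating set there to be an open neighborhood of $\sigma(\Theta)$ of measure $<1$ (item (iv) of the list) and verifying $\inf_{I\cap\Sigma\neq\varnothing}\mu(S_I)/m(I)>0$ for $d\mu=w\,dm$ from the Poisson-mass bound above, after passing from Poisson averages to honest arc-averages by a routine dyadic summation. The technical nuisances to watch are the case $m(\sigma(\Theta))>0$, where such neighborhoods are unavailable and one must argue directly from the density condition, and the passage between $\lambda$ and its arc $I_\lambda$. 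As an alternative to the dominating-set route, one can argue by contradiction and concentration: were the reverse inequality to fail, a normalized sequence $f_n\in\K_\Theta$ with $\int|f_n|^2w\to 0$ would converge weakly to $0$ (a nonzero weak limit is excluded because an $H^2$ function cannot vanish on the positive-measure set $\{w>0\}$), its $L^2(\T)$ mass would concentrate at the boundary, and the concentration points $\lambda_n$ would satisfy $\widehat w(\lambda_n)\to 0$ and $|\Theta(\lambda_n)|\to 0$, contradicting (iii); locating those points and proving both limits is, once more, the crux.
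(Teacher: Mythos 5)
First, a point of reference: the paper does not prove this theorem at all --- it is stated as a survey item and attributed to Volberg \cite{Volberg-81} --- so there is no in-paper proof to compare against. On the merits, your reductions and the easy implications are correct and cleanly executed: the direct embedding is trivial for $d\mu=w\,dm$ with $w\in L^\infty$, your exact formula for $\|K_\lambda^\Theta\|_\mu^2$ and the two-sided bound
$\frac{1-|\Theta(\lambda)|}{1+|\Theta(\lambda)|}\widehat w(\lambda)\le\|K_\lambda^\Theta\|_\mu^2\le\frac{1+|\Theta(\lambda)|}{1-|\Theta(\lambda)|}\widehat w(\lambda)$
are right, and they do give (i) $\Rightarrow$ (ii) $\Rightarrow$ (iii) (and $\neg$(iii) $\Rightarrow\neg$(i)) exactly as you say.

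The genuine gap is (iii) $\Rightarrow$ (i), and your proposed route does not close it. What condition (iii) actually yields is a Poisson-average lower bound $\widehat w(\lambda)\ge\delta/2$ \emph{only at points $\lambda$ of the sublevel set} $\Omega(\Theta,\delta/2)$; converting this to arc densities gives $m(\Sigma\cap I_\lambda^\alpha)\gtrsim m(I_\lambda^\alpha)$ only for arcs centered at such $\lambda$ --- i.e.\ item (iii) of the dominating-set list, which the paper records as a \emph{necessary} condition for domination, not a sufficient one. The hypothesis of Theorem \ref{thm:second-reversed-embedding}, by contrast, is a lower bound on $\mu(S_I)/m(I)$ over \emph{every} arc $I$ meeting the reference dominating set, and condition (iii) does not imply this: take $w$ vanishing identically on an arc $J$ one of whose endpoints $\zeta$ lies in $\sigma(\Theta)$, with $|\Theta|$ bounded below on the tent over $J^\circ$ (so that (iii) can still hold, the spectrum at $\zeta$ being witnessed from the other side). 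Any open neighborhood of $\sigma(\Theta)$ contains points of $J^\circ$ near $\zeta$, so arcs $I\subset J^\circ$ close to $\zeta$ meet it while $\int_I w\,dm=0$; the infimum in Theorem \ref{thm:second-reversed-embedding} is then zero and the theorem gives nothing. The same obstruction defeats the attempt to show $\{w\ge\delta/4\}$ is dominating by that route (and the choice of cutoff $\delta/4$ is itself not justified a priori: (i) only guarantees that \emph{some} superlevel set dominates, at a level tied to the unknown embedding constant). Your alternative concentration-compactness sketch stalls at exactly the same place --- producing points $\lambda_n$ with \emph{both} $\widehat w(\lambda_n)\to0$ and $|\Theta(\lambda_n)|\to0$ from a norm-degenerating sequence is precisely the content of the hard direction. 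Volberg's actual proof of sufficiency is a genuinely different and harder argument (harmonic estimation and outer-function constructions), not a corollary of the reverse-embedding machinery quoted in this survey, so the crux of the theorem remains unproven in your proposal.
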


In particular, this theorem applies to the special case when $d\mu=\chi_\Sigma dm$, with $\Sigma$ a Borel subset of $\T$.
However the conditions obtained from Volberg's theorem are not expressed directly in terms of a density condition as was the case for $PW$ (see Theorem~\ref{hdshdshdhfdh}). It is natural to ask if we can obtain a characterization of dominating sets for $\K_\Theta$ in terms of a relative density. Dyakonov answered this question in \cite{MR1111913}. In the following result, $\mathscr{H}^2$ is the Hardy space of the upper-half plane $\{\Im z > 0\}$, $\Psi$ is an inner function on $\{\Im z > 0\}$, and $\mathscr{K}_{\Psi} = (\Psi \mathscr{H}^2)^{\perp}$ is a model space for the upper-half plane.

\begin{Theorem}
For an inner function $\Psi$ on $\{\Im z > 0\}$ the following are equivalent:
\begin{enumerate}
\item[(i)] $\Psi' \in L^{\infty}(\R)$;
\item[(ii)] Every Lebesgue measurable set $E \subset \R$ for which these exists an $\delta>0$ and an $\eta>0$ such that
$$
 |E\cap [x-\eta,x+\eta]|\geqslant \delta \quad \forall x\in\R
$$
is dominating for the model space $\mathscr{K}_{\Psi}$.
\end{enumerate}
\end{Theorem}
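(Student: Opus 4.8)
The plan is to prove the two implications separately, viewing the whole statement as the upper half-plane incarnation of the Paley--Wiener result of Theorem~\ref{hdshdshdhfdh}, with a Bernstein-type inequality for $\mathscr{K}_{\Psi}$ serving as the bridge between the boundary derivative condition $(i)$ and the Logvinenko--Sereda domination phenomenon $(ii)$. Throughout I use the half-plane analogue of the evaluation-norm identity $\|E_\xi\|=\sqrt{|\Theta'(\xi)|}$ recorded just before \eqref{qwyw66xb111}: the normalized reproducing kernel $K_x$ of $\mathscr{K}_{\Psi}$ at a real point $x$ (where $\Psi$ continues analytically) satisfies $\|K_x\|_{L^2(\R)}=1$ and has effective concentration width comparable to $1/|\Psi'(x)|$, equivalently $\|K_x\|^2\asymp|\Psi'(x)|$ as a peak height.

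For the implication $(i)\Rightarrow(ii)$, I would first record the inequality that is the technical heart of the argument: if $\Psi'\in L^\infty(\R)$, then every $f\in\mathscr{K}_{\Psi}$ has $f'\in L^2(\R)$ with
$$\|f'\|_{L^2(\R)}\lesssim \|\Psi'\|_{L^\infty(\R)}\,\|f\|_{L^2(\R)}.$$
Morally this says that $\mathscr{K}_{\Psi}$ behaves like a Paley--Wiener space of exponential type $\|\Psi'\|_{L^\infty(\R)}$, so that its functions vary slowly at the scale $1/\|\Psi'\|_{L^\infty(\R)}$. With this Bernstein inequality in hand I would rerun, for $\mathscr{K}_{\Psi}$, the proof of the forward direction of Theorem~\ref{hdshdshdhfdh}: partition $\R$ into intervals adapted to the Bernstein scale, use that $|f|^2$ cannot fall from near its maximum to zero faster than the derivative bound permits, and conclude that the relative density hypothesis \eqref{reldense} forces $\int_I|f|^2\lesssim\int_{E\cap I}|f|^2$ with a constant uniform in the interval $I$. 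Summing over the partition then yields $\|f\|_{L^2(\R)}\lesssim\|f\|_{L^2(E)}$, i.e. $E$ is dominating for $\mathscr{K}_{\Psi}$.

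For the implication $(ii)\Rightarrow(i)$, I would argue by contraposition and build an explicit relatively dense set that fails to dominate. If $\Psi'\notin L^\infty(\R)$, choose real points $x_n$, which after passing to a subsequence may be taken pairwise separated, with $|\Psi'(x_n)|\to\infty$, so that the concentration widths $w_n:=1/|\Psi'(x_n)|$ tend to $0$. Fix $\eta>0$ and a length $L\in(0,\eta)$, delete from $\R$ the intervals $(x_n-L,x_n+L)$, and let $E$ be the remaining set. Since the deleted intervals have fixed length $2L$ and the $x_n$ are separated, $E$ satisfies \eqref{reldense} with this $\eta$ and $\delta:=2(\eta-L)>0$, so $E$ is relatively dense. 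On the other hand, because $w_n\to0$ while $L$ stays fixed, the Poisson-type tail estimate gives
$$\int_E|K_{x_n}|^2\,dx \leqslant \int_{|x-x_n|>L}|K_{x_n}|^2\,dx \lesssim \frac{w_n}{L}\longrightarrow 0,$$
while $\|K_{x_n}\|_{L^2(\R)}=1$; testing the domination inequality on $f=K_{x_n}$ then contradicts the existence of a uniform lower bound. Hence $E$ is not dominating and $(ii)$ fails.

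The main obstacle is concentrated in the forward direction: converting the global Bernstein inequality into the uniform local comparison $\int_I|f|^2\lesssim\int_{E\cap I}|f|^2$ that survives summation. When the given $\eta$ is large compared with the Bernstein scale $1/\|\Psi'\|_{L^\infty(\R)}$, relative density on a window of length $2\eta$ does not descend to density on the finer intervals where $f$ genuinely oscillates, and a soft ``slowly varying'' estimate no longer suffices; this is precisely the nontrivial Logvinenko--Sereda/Panejah content, which must be handled by a Remez-type inequality (or an analyticity and normal-families argument) on each interval. A secondary, more routine point is making the half-plane reproducing-kernel asymptotics $\|K_x\|^2\asymp|\Psi'(x)|$ and the tail estimate above precise at the real boundary points where $\Psi$ continues analytically.
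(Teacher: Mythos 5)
This theorem is stated in the survey as a quotation of Dyakonov's result \cite{MR1111913}; the paper supplies no proof of it, so there is no in-text argument to compare against and I am judging your sketch on its own terms. Your overall architecture (a Bernstein inequality for $\mathscr{K}_{\Psi}$ driving a Logvinenko--Sereda argument for $(i)\Rightarrow(ii)$, and a kernel-concentration counterexample for $(ii)\Rightarrow(i)$) is the right one and is consistent with how the result is actually proved. The backward direction is essentially complete: the tail bound $\int_{|t-x|>L}|K_x|^2\,dt\lesssim (|\Psi'(x)|L)^{-1}$ follows from $|k_x(t)|\leqslant C|t-x|^{-1}$ together with $\|k_x\|^2\asymp|\Psi'(x)|$, and deleting fixed-length intervals around separated points $x_n$ does produce a relatively dense non-dominating set. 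The one technical point you should address there is that $k_{x_n}\in\mathscr{K}_{\Psi}$ only at real points where the angular derivative is \emph{finite}, so you must exhibit $x_n$ with $|\Psi'(x_n)|$ finite but unbounded; this is cleanest if you instead negate Garnett's equivalent condition quoted right after the theorem (namely $\inf\{|\Psi(z)|:0<\Im z<h\}>0$ for some $h$) and test on normalized kernels $K_{z_n}$ at points $z_n=x_n+iy_n$ with $y_n\to 0$ and $|\Psi(z_n)|\to 0$, which concentrate on intervals of length $\asymp y_n$ and avoid the boundary-derivative issue entirely.

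The genuine gap is in $(i)\Rightarrow(ii)$, and it is exactly the one you flag yourself: the Bernstein inequality $\|f'\|_{L^2(\R)}\lesssim\|\Psi'\|_{\infty}\|f\|_{L^2(\R)}$ yields the local comparison $\int_I|f|^2\lesssim\int_{E\cap I}|f|^2$ only when the density scale $\eta$ is small compared with $1/\|\Psi'\|_{\infty}$, whereas statement $(ii)$ requires domination for \emph{every} relatively dense $E$, with $\eta$ arbitrary. Upgrading density at a coarse scale to domination is the entire content of the Logvinenko--Sereda theorem, and it does not follow from a derivative bound plus ``slow variation''; one needs a Remez-type inequality for the relevant class, which in turn requires knowing that functions of $\mathscr{K}_{\Psi}$ (via pseudocontinuation across $\R$, using $|\Psi|\geqslant\delta$ on a strip $0<\Im z<h$) extend analytically with controlled growth to a strip of fixed width about $\R$, so that the Paley--Wiener machinery applies on each window. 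Your sketch names this obstacle but does not supply the reduction-to-a-strip step or the Remez inequality, so as written the forward implication is not proved. Supplying that reduction --- $\Psi'\in L^{\infty}(\R)$ iff $|\Psi|$ is bounded below on a strip, hence $\mathscr{K}_{\Psi}$ behaves like a space of functions analytic and of bounded type in a two-sided strip --- is what would close the argument.
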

In the case corresponding to the Paley-Wiener space $PW$, $\Psi(z)=e^{2i\pi z}$ and thus $|\Psi'(x)|=2\pi$ on $\R$. As was shown by Garnett \cite{Garnett}, the condition $\Psi'\in L^{\infty}(\R)$ is equivalent to one of the following two conditions: 
\begin{enumerate}
\item[(i)] $\exists h>0$ such that 
$$
\inf\{|\Psi(z)|:0<\Im(z)<h\}>0;
$$
\item[(ii)] $\Psi$ is invertible in the Douglas algebra $[H^\infty,e^{-ix}]$ (the algebra generated by $H^\infty$ and the space of bounded uniformly continuous functions on $\R$).
\end{enumerate}
For instance, the above conditions are satisfied when $\Psi(z)=e^{iaz}B(z)$, where $a>0$ and $B$ is an interpolating Blaschke product satisfying $\dist(B^{-1}(\{0\}),\R)>0$ (e.g., the zeros of $B$ are $\{n + i\}_{n \in \Z}$). 

What happens if we were to replace the condition 
$$\|f\|_{m} \asymp \|f\|_{\mu} \quad \forall f \in \K_{\Theta}$$ with the stronger condition 
$$\|f\|_{m} = \|f\|_{\mu} \quad \forall f \in \K_{\Theta}.$$
Such ``isometric measures'' 
were characterized by Aleksandrov \cite{Al98} (see also \cite{BFGHR}).

\begin{Theorem}\label{ThmAleks}
For $\mu\in M_+(\T)$  the following assertions are equivalent:
\begin{itemize}
\item[(i)] $\|f\|_{\mu} = \|f\|_{m}$ for all $f \in \K_{\Theta}$;
\item[(ii)] $\Theta$ has non-tangential boundary values $\mu$-almost
everywhere on $\T$ and
\[
 \int_{\T}\left|\frac{1-\overline{\Theta(z)}\Theta(\zeta)}{1-\overline{z}
 \zeta}\right|^2d\mu(\zeta)=\frac{1-|\Theta(z)|^2}{1-|z|^2},
 \quad z\in\D;
\]
\item[(iii)] there exists a $\varphi\in H^{\infty}$ such that $\|\varphi\|_{\infty}\leqslant
1$ and
\begin{eqnarray}\label{ThmAleksClark}
 \int_{\T}\frac{1-|z|^2}{|\zeta-z|^2}d\mu(\zeta)
 =\Re\left(\frac{1+\varphi(z)\Theta(z)}{1-\varphi(z)\Theta(z)}\right),
 \quad z\in\D.
\end{eqnarray}
\end{itemize}
\end{Theorem}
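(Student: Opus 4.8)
The plan is to read condition (ii) as the statement that $\mu$ is isometric on the reproducing kernels of $\K_{\Theta}$, and then to bootstrap from kernels to the whole space and to the analytic description in (iii). Writing $k_{z}^{\Theta}(\zeta)=\frac{1-\overline{\Theta(z)}\Theta(\zeta)}{1-\bar z\zeta}$ for the reproducing kernel of $\K_{\Theta}$, one has $\|k_{z}^{\Theta}\|_{m}^{2}=k_{z}^{\Theta}(z)=\frac{1-|\Theta(z)|^{2}}{1-|z|^{2}}$, so the integral identity in (ii) is exactly $\|k_{z}^{\Theta}\|_{\mu}^{2}=\|k_{z}^{\Theta}\|_{m}^{2}$ for every $z\in\D$. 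With this reading, the implication (i)$\Rightarrow$(ii) is immediate: each $k_{z}^{\Theta}$ lies in $\K_{\Theta}$, so applying (i) to $f=k_{z}^{\Theta}$ gives the identity. The only delicate point is that for the integral to make sense $\Theta$ must have nontangential limits $\mu$-a.e., which I would extract from the finiteness of $\|k_{z}^{\Theta}\|_{\mu}$ together with the existence of nontangential limits of $\K_{\Theta}$-functions off the relevant null sets (Poltoratski), since the continuous nonvanishing factor $(1-\bar z\zeta)^{-1}$ forces $\Theta$ itself to have limits $\mu$-a.e.

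For (ii)$\Rightarrow$(i) I would first upgrade the diagonal identity to the full sesquilinear identity by polarization. The functions $(z,w)\mapsto\langle k_{z}^{\Theta},k_{w}^{\Theta}\rangle_{\mu}$ and $(z,w)\mapsto\langle k_{z}^{\Theta},k_{w}^{\Theta}\rangle_{m}=k_{z}^{\Theta}(w)$ are both anti-holomorphic in $z$ and holomorphic in $w$, and they agree on the diagonal $z=w$ by (ii); setting $u=\bar z$ turns them into genuinely jointly holomorphic functions agreeing on the maximally totally real set $\{w=\bar u\}$, and comparing Taylor coefficients forces them to coincide. Hence $\langle\cdot,\cdot\rangle_{\mu}=\langle\cdot,\cdot\rangle_{m}$ on the linear span of the kernels, which is dense in $\K_{\Theta}$, so $\|f\|_{\mu}=\|f\|_{m}$ on this dense subspace. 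An isometry on a dense subspace extends to an isometric embedding $\K_{\Theta}\to L^{2}(\mu)$; the remaining point, that this abstract extension really is given by $\mu$-a.e.\ boundary values, is precisely where the singular part of $\mu$ meets the boundary behaviour of $\K_{\Theta}$, and I would settle it using Aleksandrov's density of $\K_{\Theta}\cap C(\D^{-})$ in $\K_{\Theta}$ together with the nontangential-limit theorem of Poltoratski.

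The heart of the theorem is the equivalence (ii)$\Leftrightarrow$(iii), which I would run through the Herglotz transform $H_{\mu}(z)=\int_{\T}\frac{\zeta+z}{\zeta-z}\,d\mu(\zeta)$. Since $d\mu\geqslant 0$, $H_{\mu}$ is analytic with $\Re H_{\mu}(z)=\int_{\T}\frac{1-|z|^{2}}{|\zeta-z|^{2}}\,d\mu$, precisely the Poisson integral on the left of (iii), and maps $\D$ into the closed right half-plane, so $b:=\frac{H_{\mu}-1}{H_{\mu}+1}$ is a well-defined Schur function. Condition (iii), which asks that this Poisson integral equal $\Re\frac{1+\varphi\Theta}{1-\varphi\Theta}$ for some Schur function $\varphi$, is then (up to the additive imaginary constant left free by the Herglotz representation) the assertion that $b=\varphi\Theta$, i.e.\ that $b$ is divisible by the inner function $\Theta$ with contractive quotient. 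The bridge to (ii) is an explicit computation: writing $\frac{1}{|1-\bar z\zeta|^{2}}=\frac{1}{1-|z|^{2}}P_{z}(\zeta)$ and expanding $|1-\overline{\Theta(z)}\Theta(\zeta)|^{2}$, the kernel-norm identity of (ii) becomes a single scalar relation among the Poisson integrals of $d\mu$, $\Theta\,d\mu$ and $\overline{\Theta}\,d\mu$; passing to the analytic Herglotz completions of these three measures converts that relation into the divisibility $b=\varphi\Theta$ with $\|\varphi\|_{\infty}\leqslant 1$.

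The step I expect to be the main obstacle is this bridge in the direction (ii)$\Rightarrow$(iii): producing the contractive analytic $\varphi$ from the scalar Poisson-integral relation. Two features make it delicate. First, one must know $|\Theta|=1$ $\mu$-a.e.\ for the expansion of $|k_{z}^{\Theta}|^{2}$ to collapse to the clean three-term form; this is automatic $m$-a.e., but on the singular part of $\mu$ it again requires that $\Theta$ have unimodular nontangential limits on the carrier of $\mu_{s}$. Second, reconstructing $\varphi$ amounts to recovering a bounded analytic function from boundary data and verifying the norm bound $\|\varphi\|_{\infty}\leqslant 1$, and it is here that the positivity encoded in the Herglotz representation of $H_{\mu}$ is used decisively. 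By contrast the reverse implication (iii)$\Rightarrow$(ii) is a direct verification: substituting $H_{\mu}=\frac{1+\varphi\Theta}{1-\varphi\Theta}$ into the three-term Poisson expression and simplifying recovers $\frac{1-|\Theta(z)|^{2}}{1-|z|^{2}}$, establishing the kernel identity.
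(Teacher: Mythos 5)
Your reduction of (ii) to the statement that $\mu$ is isometric on the reproducing kernels, and the polarization step (two sesqui-analytic functions agreeing on the diagonal agree identically), are correct and do yield an isometric embedding $V\colon\K_\Theta\to L^2(\mu)$ extending $f\mapsto f|_{\T}$ from the span of the kernels. The genuine gap is the identification of $Vf$ with the nontangential boundary values of $f$ on the \emph{singular} part of $\mu$: this is where the content of the theorem lives, and it is not settled by "density of $\K_\Theta\cap C(\D^-)$ plus Poltoratski". Indeed, if $f_n\to f$ in $H^2$ with $f_n$ in the kernel span, a subsequence converges $m$-a.e., hence $\mu_{ac}$-a.e., so $Vf=f$ on the absolutely continuous part for free; but on $\mu_s$ nothing forces the $L^2(\mu)$-limit of the $f_n$ to agree with the nontangential limits of $f$, nor even guarantees those limits exist. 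Poltoratski's theorem supplies exactly that information, but for the Clark operator $V_b$ of \eqref{OpVb}: it identifies the boundary values of $V_bg$ with $g$ only $\mu_s$-a.e. So you must first run (ii)$\Rightarrow$(iii) to realize $\mu$ as the Aleksandrov--Clark measure of $b=\varphi\Theta$, and even then you face the absolutely continuous part: when $\varphi$ is not inner, $\mu_{ac}\neq 0$ and the boundary values of $f=V_bg$ do \emph{not} equal $g$ there, so the chain $\|f\|_m=\|g\|_\mu=\|f\|_\mu$ breaks. This is precisely why the argument the paper sketches (Aleksandrov's) splits into two cases: for inner $\varphi$ the measure is singular and Poltoratski closes the proof at once; for non-inner $\varphi$ one approximates in the topology $\sigma(M(\T),C(\T))$, using that the set of isometric measures is weak-star closed. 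Your proposal contains no substitute for that approximation step, and I do not see how your route avoids it.

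Two further points in the bridge (ii)$\Leftrightarrow$(iii) are asserted rather than proved. First, the collapse of $|1-\overline{\Theta(z)}\Theta(\zeta)|^2$ to the three-term form requires $|\Theta|=1$ $\mu_s$-a.e.; you flag this, but it must actually be extracted from (ii) (on the singular carrier it is not automatic). Second, the passage from the resulting scalar relation among the Poisson integrals of $d\mu$ and $\Theta\,d\mu$ to the existence of a contractive analytic $\varphi$ with $b=\varphi\Theta$ (up to the imaginary constant in the Herglotz representation) is the hard analytic step, and "the positivity of the Herglotz representation is used decisively" names the difficulty without resolving it. In fairness, the paper itself only sketches the proof and refers to Aleksandrov; but its sketch is built around the partial isometry $V_b$, Poltoratski's theorem, and the weak-star closedness of the isometric measures, and it is this last ingredient that your outline is missing.
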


The condition in \eqref{ThmAleksClark} says that  $\mu$ is one of the so-called {\em Aleksandrov-Clark measures} for $b=\phi \Theta$. It is known that the operator $V_b:L^2(\mu)\lra \HH(b)=K_{\Theta}\oplus \Theta\HH(\phi)$ introduced in \eqref{OpVb} below is an onto partial isometry, which is isometric on $H^2(\mu)$, the closure of the polynomials in $L^2(\mu)$ (see Section \ref{Sec:Hbspaces} for more on $\HH(b)$-spaces and Aleksandrov-Clark measures). By a result of Poltoratski \cite{Poltoratskii}, $V_bg=g$ $\mu_S$-a.e. where $\mu_S$ is the singular part of $\mu$ with respect to $m$. In particular, when $\phi$ is inner, then $\HH(b)=\K_{\Theta\phi}=\K_{\Theta}\oplus\Theta \K_{\phi}$ and $\mu=\mu_S$ is singular, and hence for every $f=V_bg\in \K_{\Theta}$, where $g\in H^2(\mu)$, we have
\[
 \|f\|_m=\|V_bg\|_m=\|g\|_{\mu}= \|f\|_{\mu}.
\] 
When $\phi$ is not inner, Aleksandrov proves Theorem \ref{ThmAleks} by using  the above fact for inner functions along with the fact that the isometric measures form a closed subset of the Borel measures $M(\T)$ in the topology $\sigma(M(\T),C(\T))$.


L. de Branges \cite{deBranges68} proved a version of Theorem \ref{ThmAleks} for meromorphic inner functions and Krein \cite{Krein-Gorbachuck} obtained a characterization of  isometric measures for $\K_{\Theta}$ using more operator theoretic langage. 


\section{de Branges-Rovnyak spaces}\label{Sec:Hbspaces}

These spaces are generalizations of the model spaces. Let 
$$H^{\infty}_{1} = \{f \in H^{\infty}: \|f\|_{\infty} \leqslant 1\}$$ be the {\em closed unit ball in $H^{\infty}$}. Recall that when $\Theta$ is inner, the model space $\K_{\Theta}$ is a closed subspace of $H^2$ with reproducing kernel function 
$$k^{\Theta}_{\lambda}(z) = \frac{1 - \overline{\Theta(\lambda)} \Theta(z)}{1 - \overline{\lambda} z},\qquad \lambda,z\in\D.$$ Using this as a 
guide, one can, for a given $b \in H^{\infty}_{1}$, define the {\em de Branges-Rovnyak space} $\HH(b)$ to be the unique reproducing kernel Hilbert space of analytic functions on $\D$ for which 
$$k^{b}_{\lambda}(z) = \frac{1 - \overline{b(\lambda)} b(z)}{1 - \overline{\lambda} z},\qquad \lambda,z\in\D,$$ is the reproducing kernel \cite{Paulsen}. Note that the function $K(z,\lambda):=k_\lambda^b(z)$ is positive semi-definite on $\D$, i.e.,
$$
\sum_{i,j=1}^n \overline{a_i}a_j K(\lambda_i,\lambda_j)\geqslant 0,
$$
for all finite sets $\{\lambda_1,\dots,\lambda_n\}$ of points in $\D$ and all complex numbers $a_1,\dots,a_n$. Hence, we can associate to it a reproducing kernel Hilbert space and the above definition makes sense. There is an equivalent definition of $\HH(b)$ via defects of certain Toeplitz operators \cite{Sarason}.

 It is well known that though these spaces play an important role in understanding contraction operators, the norms on these $\HH(b)$ spaces, along with the elements contained in these spaces, remain mysterious. When $\|b\|_{\infty} < 1$ (i.e., $b$ belongs to the interior of $H^{\infty}_{1}$), then $\HH(b) = H^2$ with an equivalent norm. When $b$ is an inner function, then $\HH(b) = \K_{b}$ with the $H^2$ norm. For general $b \in H_{1}^{\infty}$, $\HH(b)$ is contractively  contained in $H^2$ and this space is often called a ``sub-Hardy Hilbert space'' \cite{Sarason}. The analysis of these $\HH(b)$ spaces naturally splits into two distinct cases corresponding as to whether or not $b$ is an extreme function for $H^{\infty}_{1}$, equivalently, $\log (1 - |b|) \not \in L^1(m)$.

When $b \in H^{\infty}_{1}$ is non-extreme, there is a unique outer function $a \in H^{\infty}_{1}$ such that $a(0) > 0$ and 
\begin{equation}\label{PMate}
|a(\xi)|^2 + |b(\xi)|^2 = 1 \quad \mbox{$m$-a.e. $\xi \in \T$}.
\end{equation}
Such $a$ is often called the {\em Pythagorean mate} for $b$ and the pair $(a, b)$ is called a {\em Pythagorean pair}.

There is the, now familiar,  issue of boundary behavior of $\HH(b)$ functions when defining the integrals $\|f\|_{\mu}$ in the Carleson and reverse Carleson testing conditions. With the model spaces (and with $H^2$) there is a dense set of continuous functions  for which one can sample in order to test the Carleson  ($\|f\|_{\mu} \lesssim \|f\|_{m}$) and reverse Carleson conditions ($\|f\|_{m} \lesssim \|f\|_{\mu}$). For a general $\HH(b)$ space however, it is not quite clear whether or not $\HH(b)\cap C(\D^{-})$ is even non-zero. In certain circumstances, for example when $b$ is non-extreme or when $b$ is an inner function, $\HH(b)\cap C(\D^{-})$ is actually dense in $\HH(b)$. For general extreme $b$, this remains unknown. Thus we are forced to make some definitions. 

\begin{Definition}\label{defn:admissible}
For $\mu \in M_{+}(\D^{-})$ we say that an analytic function $f$ on $\D$ is $\mu$-\emph{admissible} if the non-tangential limits of $f$ exist $\mu$-almost everywhere on $\T$. We let $\mathscr{H}(b)_{\mu}$ denote the set of $\mu$-admissible functions in $\mathscr{H}(b)$. 
\end{Definition}

With this definition in mind, if $f \in \mathscr{H}(b)_{\mu}$, then defining $f$ on the carrier of $\mu|_{\T}$ via its non-tangential boundary values, we see that $\|f\|_{\mu}$ is well defined with a value in $[0, +\infty]$.

Of course when $\mu$ is carried on $\D$, i.e., $\mu(\T) = 0$, then $\HH(b)_\mu=\HH(b)$. Hence Definition \ref{defn:admissible} only comes into play when $\mu$ has part of the unit circle $\T$ in its carrier. Note that  $\mathscr{H}(b) = \mathscr{H}(b)_{m}$ since $\mathscr{H}(b)\subset H^2$.
However, there are often other $\mu$, even ones with non-trivial singular parts on $\T$ with respect to $m$, for which $\mathscr{H}(b) = \mathscr{H}(b)_{\mu}$.  The Clark measures associated with an inner function $b$ 
have this property \cite{BFGHR, CRM}.  

\begin{Definition}
 A measure $\mu \in M_{+}(\D^{-})$ is a \emph{Carleson measure} for $\mathscr{H}(b)$ if $\HH(b)_{\mu} = \HH(b)$ and $\|f\|_{\mu} \lesssim \|f\|_{b}$ for all $f \in \mathscr{H}(b)$.
 \end{Definition}
 
 When $b \equiv 0$, i.e., when $\HH(b) = H^2$ then, as a consequence of Carleson's theorem (see Theorem \ref{Carleson-revised}) for $H^2$, we see that when $\mu$ satisfies $\mu(S_{I}) \lesssim |I|$ for all arcs $I$, then $\mu|_{\T} \ll m$ and so $\HH(b)_{\mu} = \HH(b)$. When $b$ is an inner function, recall a discussion following \eqref{A-emb} which says that if the Carleson testing condition $\|f\|_{\mu} \lesssim \|f\|_{m}$ holds for all $f \in \HH(b)\cap C(\D^{-})$, then $\HH(b)_{\mu} = \HH(b)$. So in these two particular cases, the delicate issue of defining the integrals in $\|f\|_{\mu}$ for $f \in \HH(b)$ seems to sort itself out. For general $b$, we do not have this luxury. 
  
Lacey et al.~\cite{LW} solved the longstanding problem of characterizing the two-weight inequalities for 
Cauchy transforms.  Let us take a moment to
indicate how their results can be used to discuss Carleson measures for $\mathscr{H}(b)$. 
Let $\sigma$ be the Aleksandrov-Clark measure associated with $b$, that is the unique $\sigma \in M_{+}(\T)$ satisfying
\[
\frac{1-|b(z)|^2}{|1-b(z)|^2}=\int_\T \frac{1-|z|^2}{|z-\zeta|^2}\,d\sigma(\zeta),\quad z\in\D.
\]
Let $V_b:L^2(\sigma)\longrightarrow \HH(b)$ be the operator defined by 
\begin{equation}\label{OpVb}
(V_b f)(z)=(1-b(z))\int_\T \frac{f(\zeta)}{1-\bar\zeta z}\,d\sigma(\zeta)=(1-b(z))(C_\sigma f)(z),
\end{equation}
where 
$C_{\sigma}$ is the Cauchy transform
$$(C_{\sigma} f)(z) = \int_{\T} \frac{f(\zeta)}{1 - \overline{\zeta} z} d\sigma(\zeta).$$
It is known  \cite{Sarason} that $V_b$ is a partial isometry from $L^2(\sigma)$ onto $\HH(b)$ and 
$$\ker V_b = \ker C_{\sigma} = (H^2(\sigma))^\perp.$$ Here $H^2(\sigma)$ denotes the closure of polynomials in $L^2(\sigma)$ and the $\perp$ is in $L^2(\sigma)$. 
As a consequence, since every function $f\in \HH(b)$ can be written as
$f=V_bg$ for some $g\in H^2(\sigma)$, $\mu$
is a Carleson measure for $\HH(b)$ if and only if 
\[
\|V_b g\|_{\mu}=\|f\|_{\mu}\lesssim \|f\|_b=\|V_b g\|_b=\|g\|_\sigma \quad \forall g\in H^2(\sigma).
\]
Setting $\nu_{b,\mu}:=|1-b|^2\mu$, we have $$\|V_b g\|_{\mu}^2=
\int_{\D^-}|1-b|^2 |C_\sigma g|^2\,d\mu=\|C_\sigma g\|_{\nu_{b,\mu}}^2.$$
This 
yields the following:

\begin{Theorem}\label{LW-C}
Let $\mu \in M_{+}(\D^-)$, $b$ a $\mu$-admissible
function in $H^\infty_1$, and $\nu_{b,\mu}:=|1-b|^2\mu.$ Then the following are equivalent: 
\begin{enumerate}
\item[(i)] $\mu$ is a Carleson measure for $\HH(b)$;
\item [(ii)] The Cauchy transform $C_\sigma$  is a bounded operator from $L^2(\sigma)$ into $L^2(\D^-,\nu_{b,\mu})$, where $\sigma$ is the Aleksandrov-Clark measure associated with $b$.
\end{enumerate}
\end{Theorem}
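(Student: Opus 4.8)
The plan is to exploit the dictionary between $\HH(b)$ and the subspace $H^2(\sigma)$ of $L^2(\sigma)$ furnished by the operator $V_b$, together with the two identities isolated just before the statement. Recall that every $f\in\HH(b)$ has the form $f=V_bg$ with $g\in H^2(\sigma)$, that $V_b$ is isometric on $H^2(\sigma)$ (so $\|f\|_b=\|g\|_\sigma$), and that the pointwise factorization $(V_bg)(z)=(1-b(z))(C_\sigma g)(z)$ gives
\[
\|f\|_\mu^2=\int_{\D^-}|1-b|^2\,|C_\sigma g|^2\,d\mu=\|C_\sigma g\|_{\nu_{b,\mu}}^2.
\]
Since $b$ is assumed $\mu$-admissible, $1-b$ has non-tangential limits $\mu$-a.e.\ on $\T$, so $\nu_{b,\mu}=|1-b|^2\mu$ is a well-defined positive measure on $\D^-$ and the displayed identity is meaningful with both sides read via boundary values.

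First I would translate the Carleson inequality. The defining estimate $\|f\|_\mu\lesssim\|f\|_b$ for all $f\in\HH(b)$ becomes, under the substitution $f=V_bg$ and the two identities above, precisely $\|C_\sigma g\|_{\nu_{b,\mu}}\lesssim\|g\|_\sigma$ for all $g\in H^2(\sigma)$. Because $V_b$ maps $H^2(\sigma)$ \emph{onto} $\HH(b)$, this passage is an equivalence and not merely one implication: condition (i) holds if and only if $C_\sigma$ is bounded from $H^2(\sigma)$ into $L^2(\D^-,\nu_{b,\mu})$.

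The remaining, and only genuinely substantive, step is to upgrade boundedness on the subspace $H^2(\sigma)$ to boundedness on all of $L^2(\sigma)$, which is what (ii) asserts. Here I would invoke the stated fact $\ker C_\sigma=(H^2(\sigma))^\perp$. Writing any $h\in L^2(\sigma)$ as $h=g+g'$ with $g\in H^2(\sigma)$ and $g'\in(H^2(\sigma))^\perp$, we have $C_\sigma h=C_\sigma g$ since $C_\sigma g'=0$; hence, if $\|C_\sigma g\|_{\nu_{b,\mu}}\leqslant c\,\|g\|_\sigma$ holds on $H^2(\sigma)$, then $\|C_\sigma h\|_{\nu_{b,\mu}}=\|C_\sigma g\|_{\nu_{b,\mu}}\leqslant c\,\|g\|_\sigma\leqslant c\,\|h\|_\sigma$, the last inequality because $g$ is the orthogonal projection of $h$ onto $H^2(\sigma)$. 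Thus $C_\sigma$ is bounded on $H^2(\sigma)$ if and only if it is bounded on all of $L^2(\sigma)$, with the same norm, and (i) $\Leftrightarrow$ (ii) follows.

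I expect no serious analytic obstacle once the framework preceding the statement is in place; the delicate point to watch is purely one of bookkeeping, namely ensuring that $\|f\|_\mu$ and $\|C_\sigma g\|_{\nu_{b,\mu}}$ genuinely refer to the same quantity $\mu$-a.e.\ on $\T$. This is exactly what the $\mu$-admissibility of $b$ secures: it makes the boundary integrals well-defined and validates the factorization $|V_bg|=|1-b|\,|C_\sigma g|$ $\mu$-a.e.\ on $\D^-$, so that the translation between the two formulations is legitimate.
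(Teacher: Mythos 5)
Your proposal is correct and follows essentially the same route as the paper: the identification $f=V_bg$ with $V_b$ a partial isometry from $L^2(\sigma)$ onto $\HH(b)$, isometric on $H^2(\sigma)$, together with the factorization $\|V_bg\|_\mu=\|C_\sigma g\|_{\nu_{b,\mu}}$ and the fact that $\ker C_\sigma=(H^2(\sigma))^\perp$. The only difference is that you spell out explicitly the (easy) passage from boundedness of $C_\sigma$ on $H^2(\sigma)$ to boundedness on all of $L^2(\sigma)$, which the paper leaves implicit.
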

We refer the reader to \cite[Theorem 1.7]{LW} for a description of the boundedness of the Cauchy transform operator $C_{\sigma}$. However, it should be noted that the  characterization of Carleson measures for $\HH(b)$, obtained combining Theorem~\ref{LW-C} and \cite[Theorem 1.7]{LW}, is not purely geometric. 

The following result from \cite{BFM}, similar in flavor to Theorem \ref{7775w882727}, discusses the Carleson measures for $\HH(b)$.

\begin{Theorem}
For $b \in H^{\infty}_{1}$ and $\varepsilon\in (0,1)$ define 
$$
\Omega(b,\varepsilon):=\{z\in\D:|b(z)|<\varepsilon\},
$$
$$\Sigma(b):=\left\{\zeta\in\T:\varliminf_{z\to\zeta}|b(z)|<1\right\},
$$
$$\widetilde\Omega(b,\varepsilon) :=\Omega(b,\varepsilon)\cup\Sigma(b).$$
Let $\mu \in M_{+}(\D^{-})$ and define the following conditions:
\begin{enumerate}
\item[(i)]  $\mu(S_{I}) \lesssim |I|$ for all arcs $I \subset \T$ for which $I \cap \widetilde\Omega(b,\varepsilon) \not = \varnothing$;
\item[(ii)] $\HH(b)_{\mu} = \HH(b)$ and $\|f\|_{\mu} \lesssim \|f\|_{b}$ for all $f \in \HH(b)$;
\item[(iii)] $\HH(b)_{\mu} = \HH(b)$ and $\|k_\lambda^b\|_\mu\lesssim \|k_\lambda^b\|_b$ for all $\lambda\in\mathbb D$.
\end{enumerate}
Then $(i)\Longrightarrow (ii)\Longrightarrow (iii)$. Moreover, suppose there exists an $\varepsilon\in (0,1)$ such that $\Omega(b,\varepsilon)$ is connected and its closure contains $\Sigma(b)$. Then $(i)\Longleftrightarrow (ii)\Longleftrightarrow (iii)$. 
%
%
%
%
%
\end{Theorem}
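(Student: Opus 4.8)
The plan is to follow the same template as the model space result, Theorem~\ref{7775w882727}, replacing the sub-level set $\Omega(\Theta,\varepsilon)$ and the spectrum $\sigma(\Theta)$ by $\Omega(b,\varepsilon)$ and $\Sigma(b)$, and exploiting the contractive containment $\HH(b)\subset H^2$ together with the reproducing kernel identity $\|k_\lambda^b\|_b^2=(1-|b(\lambda)|^2)/(1-|\lambda|^2)$. The implication $(ii)\Rightarrow(iii)$ is immediate: the kernels $k_\lambda^b$ lie in $\HH(b)$, so testing the Carleson inequality from $(ii)$ against $f=k_\lambda^b$ gives the kernel testing condition, while the admissibility clause $\HH(b)_\mu=\HH(b)$ is common to both. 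The real work is in $(i)\Rightarrow(ii)$ and, under the connected level set hypothesis, in the converse $(iii)\Rightarrow(i)$.

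For $(i)\Rightarrow(ii)$ I would split $\mu=\mu_1+\mu_2$, where $\mu_1$ is the restriction of $\mu$ to a fixed neighborhood of the ``bad set'' $\widetilde\Omega(b,\varepsilon)$ and $\mu_2=\mu-\mu_1$ is supported away from it. On the boxes meeting $\widetilde\Omega(b,\varepsilon)$ the hypothesis $\mu(S_I)\lesssim|I|$ is exactly the $H^2$ Carleson box condition, so by Theorem~\ref{Carleson-revised} and the contractive inclusion $\|f\|_{m}\leqslant\|f\|_b$ one gets $\|f\|_{\mu_1}\lesssim\|f\|_{m}\leqslant\|f\|_b$; this also forces $\mu_1|_\T\ll m$, so admissibility is not an issue there. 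For $\mu_2$, whose support stays away from $\widetilde\Omega(b,\varepsilon)$, the point is that functions in $\HH(b)$ are well controlled there, exactly as in Proposition~\ref{t40proeihkqwj} for model spaces: away from $\Sigma(b)$ the evaluation functionals extend boundedly and the singular part of $\mu_2|_\T$ is carried on $\T\setminus\Sigma(b)$, giving $\HH(b)_{\mu_2}=\HH(b)$ and $\|f\|_{\mu_2}\lesssim\|f\|_b$. The engine that makes the near part quantitative without assuming any \emph{a priori} direct Carleson condition is a Bernstein-type inequality for $\HH(b)$ in the spirit of Baranov, controlling the oscillation of $f\in\HH(b)$ on a Whitney box by $\|f\|_b$; this is the same mechanism reproduced in the appendix for Theorem~\ref{thm:first-reversed-embedding}.

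For the converse $(iii)\Rightarrow(i)$, under the hypothesis that $\Omega(b,\varepsilon)$ is connected with $\Sigma(b)$ in its closure, I would argue as in Treil--Volberg. Given an arc $I$ with $S_I$ meeting $\widetilde\Omega(b,\varepsilon)$, the connectivity (and a Cohn-type propagation between levels) lets me choose a test point $\lambda\in\Omega(b,\varepsilon)$ sitting near the top of $S_I$, so that $1-|\lambda|\asymp|I|$ and the center of $S_I$ is close to $\lambda/|\lambda|$. Since $|b(\lambda)|<\varepsilon$, the elementary bound $|1-\overline{b(\lambda)}b(z)|\geqslant 1-\varepsilon$ gives $|k_\lambda^b(z)|\geqslant(1-\varepsilon)/|1-\overline\lambda z|\gtrsim(1-\varepsilon)/|I|$ for $z\in S_I$. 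Feeding this into the kernel testing condition $(iii)$,
\begin{equation*}
\frac{(1-\varepsilon)^2}{|I|^2}\,\mu(S_I)\leqslant\int_{S_I}|k_\lambda^b|^2\,d\mu\leqslant\|k_\lambda^b\|_\mu^2\lesssim\|k_\lambda^b\|_b^2=\frac{1-|b(\lambda)|^2}{1-|\lambda|^2}\leqslant\frac{1}{1-|\lambda|^2}\asymp\frac{1}{|I|},
\end{equation*}
so $\mu(S_I)\lesssim|I|$, which is $(i)$.

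The main obstacle will be the positioning step in $(iii)\Rightarrow(i)$: without connectivity of $\Omega(b,\varepsilon)$ the sub-level set may meet $S_I$ only deep inside the disk, and there is then no admissible test point $\lambda$ near the top of the box for which both $|k_\lambda^b|$ is uniformly large on $S_I$ and $\|k_\lambda^b\|_b^2\asymp 1/|I|$. This is precisely why the equivalence with $(i)$ requires the connected level set condition, just as in Theorem~\ref{7775w882727}; the always-valid part of the statement is only the chain $(i)\Rightarrow(ii)\Rightarrow(iii)$. A secondary technical point is the far-field control of $\HH(b)$ functions used for $\mu_2$, which rests on the Ahern--Clark/Sarason description of where evaluation functionals extend boundedly to $\HH(b)$.
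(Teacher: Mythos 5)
The survey does not actually prove this theorem: it is quoted from \cite{BFM}, with only the remark that it is ``similar in flavor'' to the Treil--Volberg result (Theorem \ref{7775w882727}). So your proposal can only be measured against that template and the cited source. Your treatment of $(ii)\Rightarrow(iii)$ (test on the kernels $k_\lambda^b\in\HH(b)$) and of $(iii)\Rightarrow(i)$ under the connectivity hypothesis (position a test point $\lambda\in\Omega(b,\varepsilon)$ at depth $1-|\lambda|\asymp|I|$ over $I$, use $|1-\overline{b(\lambda)}b(z)|\geqslant 1-\varepsilon$ and $\|k_\lambda^b\|_b^2\leqslant(1-|\lambda|^2)^{-1}$) is the standard and correct outline; you correctly identify that the positioning step is exactly where connectivity of $\Omega(b,\varepsilon)$ and the containment $\Sigma(b)\subset\clos\Omega(b,\varepsilon)$ are consumed.

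The genuine gap is in $(i)\Rightarrow(ii)$. Splitting $\mu=\mu|_U+\mu|_{U^c}$ for a \emph{fixed} neighborhood $U$ of $\widetilde\Omega(b,\varepsilon)$ does not work: hypothesis $(i)$ gives the box condition only for boxes whose (possibly amplified) windows actually reach $\widetilde\Omega(b,\varepsilon)$, and a small box $S_I$ with $|I|\ll d(S_I,\widetilde\Omega(b,\varepsilon))\leqslant h$ lies inside $U$ while no controlled enlargement $S_{I'}$ with $|I'|\lesssim|I|$ meets $\widetilde\Omega(b,\varepsilon)$; so $\mu|_U$ need not satisfy the full $H^2$ box condition and Theorem \ref{Carleson-revised} cannot be invoked for it. The near/far dichotomy has to be scale-invariant (Whitney): the ``near'' region is $\{z: d(z,\widetilde\Omega(b,\varepsilon))\lesssim 1-|z|\}$, and the whole difficulty sits in the intermediate zone $1-|z|\ll d(z,\widetilde\Omega(b,\varepsilon))\ll 1$, where your appeal to ``evaluation functionals extend boundedly on compacta'' is only qualitative and gives no uniform constant as the compacta approach $\widetilde\Omega(b,\varepsilon)$. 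What is actually needed there is a quantitative decay estimate of the type $\|k_z^b\|_b^2\lesssim d\bigl(z,\widetilde\Omega(b,\varepsilon)\bigr)^{-1}$ (the $\HH(b)$-analogue of the Treil--Volberg/Aleksandrov estimate $1-|\Theta(z)|^2\lesssim(1-|z|^2)/d(z,\Omega(\Theta,\varepsilon))$), followed by a summation of $\int d\mu(z)/d(z,\widetilde\Omega(b,\varepsilon))$ over a Whitney decomposition using $(i)$. Finally, the clause $\HH(b)_\mu=\HH(b)$ in $(ii)$ is itself a claim requiring proof --- one must show that $(i)$ forces the part of $\mu|_\T$ carried by $\Sigma(b)$ to be absolutely continuous and that every $\HH(b)$ function has non-tangential limits at $\mu$-almost every point of $\T\setminus\Sigma(b)$ (an Ahern--Clark type statement); your sketch asserts this rather than derives it.
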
 

It should be noted here that, contrary to the inner case, the containment $\Sigma(b)\subset\clos(\Omega,\varepsilon)$  is not, in general, automatic. Indeed, when $b(z)=(1+z)/2$, one can easily check that the above containment is not satisfied. 

Here is a complete description of the Carleson measures for a very specific $b$ \cite{RCDR}. Note that if $b$ is a non-extreme rational function (e.g., rational but not a Blaschke product), one can show that the Pythagorean mate $a$ from \eqref{PMate} is also a rational function.  
 
 \begin{Theorem}\label{thm1.11}
Let $b \in H^{\infty}_{1}$ be rational and non-extreme and let $\mu \in M_{+}(\D^{-})$. Then the following assertions are equivalent: 
\begin{enumerate}
\item  $\mu$ is a Carleson measure for $\HH(b)$;
\item  $|a|^2\,d\mu$ is a Carleson measure for $H^2$. 
\end{enumerate}
\end{Theorem}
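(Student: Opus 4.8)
The plan is to use the outer Pythagorean mate $a$ from \eqref{PMate} as a bridge between $\HH(b)$ and $H^2$. Since $b$ is rational and non-extreme, $a$ is rational and outer, hence continuous on $\D^-$, zero-free on $\D$, and vanishing only at finitely many points $\zeta_1,\dots,\zeta_k\in\T$ (where $|b|=1$), with total multiplicity $N$. I take for granted three structural facts from the non-extreme theory and its rational refinement: (S1) $aH^2\subseteq\HH(b)$ with $\|ah\|_b\lesssim\|h\|_2$, and each $f\in\HH(b)$ has a companion $f^+\in H^2$, determined by $T_{\overline b}f=T_{\overline a}f^+$, with $\|f\|_b^2=\|f\|_2^2+\|f^+\|_2^2$; (S2) for rational $b$ every $f\in\HH(b)$ \emph{and} its companion $f^+$ extend continuously to $\D^-$, so point evaluations are bounded on all of $\D^-$ and $\HH(b)_\mu=\HH(b)$ for every $\mu\in M_+(\D^-)$ (this removes all admissibility issues and lets me integrate freely); and (S3) the element $w:=af^+ +bf$ lies in a \emph{fixed finite-dimensional} space $\mathcal E\subset C(\D^-)$ of dimension $N$ (when $a$ is a polynomial, $\mathcal E=\mathcal P_{N-1}$, the polynomials of degree $<N$). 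The whole argument runs on the pointwise identity $|f|^2=|af|^2+|bf|^2$ on $\T$ together with (S3).

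For $(1)\Rightarrow(2)$ I test the $\HH(b)$-Carleson inequality on the functions $f=ah$, $h\in H^2$. By (S1) these lie in $\HH(b)$ with $\|ah\|_b\lesssim\|h\|_2$, and since $\mu$ is Carleson for $\HH(b)$, $\int_{\D^-}|h|^2|a|^2\,d\mu=\int_{\D^-}|ah|^2\,d\mu\lesssim\|ah\|_b^2\lesssim\|h\|_2^2$. Taking $h\in H^2\cap C(\D^-)$ this is exactly condition (i) of Theorem~\ref{Carleson-revised} for the finite measure $|a|^2\mu$, so $|a|^2\,d\mu$ is a Carleson measure for $H^2$.

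The substance is $(2)\Rightarrow(1)$. Fix $f\in\HH(b)$; by (S2) it is continuous on $\D^-$. Because $|a|^2+|b|^2$ is continuous on $\D^-$ and equals $1$ on $\T$, there is an $\varepsilon>0$ with $|a|^2+|b|^2\geqslant\tfrac12$ on the collar $\{1-\varepsilon\leqslant|z|\leqslant1\}$, while on the compact set $\{|z|\leqslant1-\varepsilon\}$ the outer function $a$ is bounded below by some $\delta_\varepsilon>0$. On the compact part, $\int_{|z|\leqslant1-\varepsilon}|f|^2\,d\mu\leqslant\delta_\varepsilon^{-2}\int_{\D^-}|f|^2|a|^2\,d\mu\lesssim\|f\|_2^2\lesssim\|f\|_b^2$, using hypothesis $(2)$ with $h=f$. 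On the collar and on $\T$ I use $|f|^2\leqslant2(|af|^2+|bf|^2)$. Here $\int|af|^2\,d\mu=\int|f|^2|a|^2\,d\mu\lesssim\|f\|_2^2\lesssim\|f\|_b^2$ again by $(2)$. For the remaining term I invoke (S3): $bf=-af^+ +w$ on $\D^-$ with $w\in\mathcal E$, and since $\|w\|_2\leqslant\|f\|_2+\|f^+\|_2\lesssim\|f\|_b$ and $\mathcal E$ is finite-dimensional, $\|w\|_\infty\lesssim\|f\|_b$. Thus $|bf|^2\leqslant2|af^+|^2+2|w|^2$, where $\int|af^+|^2\,d\mu=\int|f^+|^2|a|^2\,d\mu\lesssim\|f^+\|_2^2\lesssim\|f\|_b^2$ by $(2)$ with $h=f^+$ (the continuity of $f^+$ at the $\zeta_i$ from (S2) is what guarantees $af^+$ vanishes there, so this integral is unambiguous and genuinely controlled by the atomless measure $|a|^2\mu$), and $\int|w|^2\,d\mu\leqslant\|w\|_\infty^2\,\mu(\D^-)\lesssim\|f\|_b^2$ because $\mu$ is finite. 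Summing the pieces gives $\int_{\D^-}|f|^2\,d\mu\lesssim\|f\|_b^2$, i.e. $\mu$ is Carleson for $\HH(b)$.

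The main obstacle — and the only place where rationality is used decisively — is the identity (S3): it asserts that $bf$ agrees with the element $-af^+\in aH^2$ up to a piece in a \emph{fixed} finite-dimensional space of bounded functions, equivalently that $bf$ minus its Hermite interpolant at the boundary zeros of $a$ is divisible by $a$ in $H^2$. I would establish it from $T_{\overline b}f=T_{\overline a}f^+$ by factoring $a=u\tilde a$ with $u$ invertible in $H^\infty$ and $\tilde a(z)=\prod_i(z-\zeta_i)^{m_i}$, and using $\overline{\tilde a}=c_0\,\overline z^{\,N}\tilde a$ on $\T$ to convert the Toeplitz relation into the statement that $af^+ +bf$ has spectrum contained in $\{0,1,\dots,N-1\}$. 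The finiteness of $N$, hence of $\dim\mathcal E$, is precisely what rationality of $a$ buys and what makes the estimate for $\int|w|^2\,d\mu$ uniform in $f$; everything else is routine, and the two implications together yield the stated equivalence.
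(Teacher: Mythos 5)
Your direction $(1)\Rightarrow(2)$ is correct and is the expected argument: test on $f=ah$ with $h\in H^2\cap C(\D^-)$, use the contractive inclusion $\MM(a)\subseteq\HH(b)$ to get $\|ah\|_b\leqslant\|h\|_2$, and invoke Theorem \ref{Carleson-revised}. The converse, however, rests on your structural claim (S3), and (S3) is false. Take $b(z)=z/2$, so $a\equiv\sqrt{3}/2$ and $N=0$: from $T_{\bar a}f^+=T_{\bar b}f$ one computes $f^+=\tfrac{1}{\sqrt3}S^*f$, hence $w=af^+ +bf=\tfrac12(S^*f+zf)$, which sweeps out an infinite-dimensional set as $f$ ranges over $\HH(b)=H^2$. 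The failure is not an artifact of $a$ being zero-free: for $b(z)=z(1+z)/2$, $a(z)=(1-z)/2$ (so $N=1$), the functions $f=az^k$ give $w=\tfrac{1-z}{4}\left(-z^{k-1}-z^k+z^{k+1}+z^{k+2}\right)$, of degree $k+3$. Your proposed derivation does not close because the Toeplitz relation only says $\bar bf-\bar af^+\in\overline{zH^2}$; after substituting $\bar a=c_0\bar u\,\bar z^N\tilde a\,$ you are left with the unknown anti-analytic remainder and the non-constant factor $\bar b$, and nothing forces $af^+ +bf$ to have spectrum in $\{0,\dots,N-1\}$. Since the whole estimate of $\int|bf|^2\,d\mu$ on the collar runs through (S3), the implication $(2)\Rightarrow(1)$ is not proved. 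Your (S2) is also false as stated: $\HH(b)\not\subseteq C(\D^-)$ (e.g.\ $f=ah$ with $h\in H^2$ singular at a point of $\T$ away from the zeros of $a$), and $\HH(b)_\mu=\HH(b)$ certainly fails for arbitrary $\mu\in M_+(\D^-)$; under hypothesis $(2)$ it can be rescued, but only because $(2)$ forces $\mu|_\T$ to be absolutely continuous off the finite zero set of $a$, where every $\HH(b)$ function does have nontangential limits.

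The finite-dimensional phenomenon you are reaching for is real, but it lives in $f$ itself, not in $bf$: for rational non-extreme $b$ one has $\HH(b)=aH^2+\mathcal{P}$, where $\mathcal{P}$ is the space of polynomials of degree less than $N$ (the total multiplicity of the zeros of $a$ on $\T$), the sum is direct, and $\|ah+p\|_b^2\asymp\|h\|_2^2+\|p\|_2^2$. With this in hand, $(2)\Rightarrow(1)$ is immediate and needs neither the collar splitting, nor the identity $|f|^2=|af|^2+|bf|^2$, nor the companion $f^+$: writing $f=ah+p$,
\[
\int_{\D^-}|f|^2\,d\mu\;\leqslant\;2\int_{\D^-}|h|^2\,|a|^2\,d\mu+2\,\|p\|_\infty^2\,\mu(\D^-)\;\lesssim\;\|h\|_2^2+\|p\|_2^2\;\asymp\;\|f\|_b^2,
\]
using $(2)$ for the first term and equivalence of norms on the finite-dimensional space $\mathcal{P}$ for the second. (The survey itself only cites \cite{RCDR} for this theorem; the argument just sketched is the one used there.) I would encourage you to replace (S2)--(S3) by this decomposition and rerun your plan; everything else in your write-up then collapses to the two-line estimate above.
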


If $b(z)=(1+z)/2$ then $a(z)=(1-z)/2$ and, if $\mu$ is the measure supported on $(0,1)$ defined by $d\mu(t)=(1-t)^{-\beta}dt$, for $\beta\in (0,1]$, we can use Theorem~\ref{thm1.11} to see that $\mu$ is Carleson measure for $\HH(b)$. However, $\mu$ is not a Carleson measure for $H^2$. One can see this by considering the arcs $I_\vartheta=(e^{-i\vartheta},e^{i\vartheta})$, $\vartheta\in (0,\pi/2)$, and observing that 
$$
\sup_\vartheta \frac{\mu(S(I_\vartheta))}{|I_\vartheta|}=\infty.
$$

If $b$ is a $\mu$-admissible function, then so are all of the reproducing kernels $k^{b}_{\lambda}$ (along with finite linear combinations of them) and thus, with this admissibility assumption on $b$, $\mathscr{H}(b)_{\mu}$ is a dense linear manifold in $\mathscr{H}(b)$. This motivates our definition of a reverse Carleson measure for $\HH(b)$.

\begin{Definition}
For $\mu \in M_{+}(\D^{-})$ and $b \in H^{\infty}_{1}$, we say that $\mu$ is a \emph{reverse Carleson measure for} $\mathscr{H}(b)$ if $\HH(b)_{\mu}$ is dense in $\HH(b)$ 
and $\|f\|_{b} \lesssim \|f\|_{\mu}$ for all $f \in \mathscr{H}(b)_{\mu}$. In this definition, we allow the possibility for the integral $\|f\|_{\mu}$ to be infinite. 
\end{Definition}

Here is a reverse Carleson measure result from \cite{RCDR} which focuses on the case when $b$ is non-extreme. 
 
\begin{Theorem}\label{MainThmIntro}
Let $\mu \in M_{+}(\D^{-})$ and let $ b \in H^{\infty}_{1}$ be non-extreme and $\mu$-admissible. If $h=d\mu|_{\T}/dm$, then the following assertions are equivalent: 
\begin{enumerate}
\item[(i)] $\mu$ is a reverse Carleson mesure for $\mathscr{H}(b)$;
\item[(ii)]  $\|k^{b}_\lambda\|_b \lesssim \|k^{b}_{\lambda}\|_{\mu}$ for all $\lambda\in\D$;
\item[(iii)] $d\nu:=(1 - |b|)d\mu$ 
satisfies
\begin{equation*}
\inf_{I}\frac{\nu\left(S_I\right)}{m(I)}>0;
\end{equation*}
\item[(iv)] $\mathrm{ess}\inf_{\T} (1 - |b|) h>0$. 
\end{enumerate}
\end{Theorem}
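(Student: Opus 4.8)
The plan is to run the cycle $(i)\Rightarrow(ii)\Rightarrow(iv)\Rightarrow(i)$ together with $(iii)\Leftrightarrow(iv)$, exploiting the non-extreme structure of $\HH(b)$ and reducing the two ``geometric'' statements to the reverse Carleson theorem for $H^2$ (Theorem~\ref{thmHMNO}). The implication $(i)\Rightarrow(ii)$ is immediate: since $b$ is $\mu$-admissible, each $k^b_\lambda$ lies in $\HH(b)_\mu$, so applying $(i)$ to $f=k^b_\lambda$ gives $(ii)$. For $(iii)\Leftrightarrow(iv)$ I would feed the finite positive measure $\nu=(1-|b|)\mu$ into Theorem~\ref{thmHMNO}: as $0\le 1-|b|\le 1$ we have $\nu\le\mu$, so $\nu\in M_+(\D^{-})$, and on $\T$ its absolutely continuous part has density $(1-|b|)h$ while its singular part $(1-|b|)\mu_s$ stays singular, whence $d\nu|_\T/dm=(1-|b|)h$. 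The equivalence of condition $(iii)$ of Theorem~\ref{thmHMNO} for $\nu$ (namely $\nu(S_I)\gtrsim m(I)$) with its condition $(iv)$ ($\operatorname{ess\,inf}(1-|b|)h>0$) is then exactly what is wanted.

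The implication $(ii)\Rightarrow(iv)$ I would get from a boundary Poisson-concentration argument. With $\hat k^b_\lambda=k^b_\lambda/\|k^b_\lambda\|_b$ and $\|k^b_\lambda\|_b^2=(1-|b(\lambda)|^2)/(1-|\lambda|^2)$, one has for $\lambda=r\xi$
\[
|\hat k^b_{r\xi}(e^{i\theta})|^2=\frac{1-r^2}{|1-r\bar\xi e^{i\theta}|^2}\cdot\frac{|1-\overline{b(r\xi)}\,b(e^{i\theta})|^2}{1-|b(r\xi)|^2},
\]
the first factor being the Poisson kernel at $r\xi$. Restricting the integral in $(ii)$ to $\T$ and bounding below by $h\,dm$, I send $r\to1$: the Poisson kernel concentrates at $\xi$, while at a.e.\ $\xi$ the cross term tends to $(1-|b(\xi)|^2)^2/(1-|b(\xi)|^2)=1-|b(\xi)|^2\asymp 1-|b(\xi)|$. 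Thus the uniform lower bound $1\lesssim\|\hat k^b_{r\xi}\|_\mu^2$ passes in the limit to $(1-|b(\xi)|)\,h(\xi)\gtrsim 1$ for a.e.\ $\xi$, which is $(iv)$.

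The heart is $(iv)\Rightarrow(i)$. Density of $\HH(b)_\mu$ in $\HH(b)$ is free (with $b$ $\mu$-admissible, all $k^b_\lambda$ and their span are admissible), so only the inequality $\|f\|_b\lesssim\|f\|_\mu$ must be shown. First I discard $\mu$: with $c=\operatorname{ess\,inf}_\T(1-|b|)h>0$,
\[
\|f\|_\mu^2\ge\int_\T|f|^2\,h\,dm\ge c\int_\T\frac{|f|^2}{1-|b|}\,dm,
\]
so it suffices to prove the weight-free embedding $\|f\|_b^2\lesssim\int_\T|f|^2/(1-|b|)\,dm$ (right-hand side possibly infinite). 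By the Pythagorean identity \eqref{PMate}, $1-|b|=|a|^2/(1+|b|)\asymp|a|^2$ on $\T$, so this is $\|f\|_b^2\lesssim\int_\T|f/a|^2\,dm=\|f/a\|_2^2$. On the dense manifold $\MM(a)=aH^2$ it is transparent: $a$ is outer, so for $f=ah$ one has $\|f/a\|_2=\|h\|_2=\|f\|_{\MM(a)}$, and the contractive containment $\MM(a)\subseteq\HH(b)$ (its defect kernel $\big(1-\overline{b(\lambda)}b(z)-\overline{a(\lambda)}a(z)\big)/(1-\bar\lambda z)$ is positive since $|a|^2+|b|^2\le1$ on $\D$ by subharmonicity) yields $\|f\|_b\le\|f\|_{\MM(a)}=\|f/a\|_2$.

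The main obstacle is to push this embedding off the dense set to \emph{all} of $\HH(b)$, equivalently to every $\mu$-admissible $f$: a one-sided inequality need not survive a density argument, because $\int_\T|f|^2/(1-|b|)\,dm$ is only lower semicontinuous along $\HH(b)$-convergent sequences. I would therefore prove the embedding directly for arbitrary $f\in\HH(b)$ via Sarason's non-extreme description: there is a unique $g\in H^2$ with $T_{\bar a}g=T_{\bar b}f$ and $\|f\|_b^2=\|f\|_2^2+\|g\|_2^2$. Since $|a|\le1$ gives $\|f\|_2^2\le\int_\T|f/a|^2\,dm$ for free, the real work is to dominate the ``second component'' $\|g\|_2^2$ by $\int_\T|f/a|^2\,dm$, using the boundary relation $\bar a g-\bar b f\in\overline{H^2_0}$ and the outer character of $a$. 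Once the embedding holds for all $f$, the displayed lower bound on $\|f\|_\mu^2$ gives $\|f\|_b\lesssim\|f\|_\mu$ for every $f\in\HH(b)_\mu$ (trivially when $\|f\|_\mu=\infty$), closing $(iv)\Rightarrow(i)$ and hence the whole cycle.
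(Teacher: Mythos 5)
The survey itself does not reproduce a proof of Theorem~\ref{MainThmIntro} (it cites \cite{RCDR} and only remarks that the argument is ``in the same spirit as Theorem~\ref{thmHMNO}''), so your proposal must stand on its own, and it has one genuine gap: the implication $(ii)\Rightarrow(iv)$. You restrict the integral defining $\|\hat k^b_{r\xi}\|_\mu^2$ to $\T$ and bound it \emph{below} by $\int_\T |\hat k^b_{r\xi}|^2 h\,dm$. But hypothesis $(ii)$ is itself a lower bound, $1\lesssim \|\hat k^b_{r\xi}\|_\mu^2$, so chaining these two inequalities gives no information whatsoever about $\int_\T |\hat k^b_{r\xi}|^2 h\,dm$: the mass that makes $\|\hat k^b_{r\xi}\|_\mu^2$ large could a priori live entirely on the open disk or in the singular part of $\mu|_\T$, and nothing in your limit argument rules this out. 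To make your radial-limit scheme work you would need the \emph{reverse} estimate $\|\hat k^b_{r\xi}\|_\mu^2\lesssim \int_\T|\hat k^b_{r\xi}|^2h\,dm+o(1)$, which is precisely the content of the implication and cannot be assumed. This is the hard direction of the theorem, and the mechanism the paper points to is the one in the proof of $(ii)\Rightarrow(iii)$ of Theorem~\ref{thmHMNO}: average the kernel inequality over $\lambda\in S_{I,h}$ against area measure and swap the order of integration, so that the resulting averages $\varphi_h$ tend to $0$ at interior points and to $\chi_I$ on $\T$; this annihilates the part of $\mu$ carried by $\D$ and yields the box estimate for $\nu=(1-|b|)\,d\mu$ (the extra factor $|1-\overline{b(\lambda)}b(z)|^2/(1-|b(\lambda)|^2)$ being controlled between $\asymp 1-|b(\lambda)|$ and $4/(1-|b(\lambda)|^2)$), i.e.\ condition $(iii)$, after which $(iv)$ follows by the outer-regularity argument you already use. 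Your cycle should therefore pass through $(ii)\Rightarrow(iii)$ rather than $(ii)\Rightarrow(iv)$.

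The remaining pieces are in good shape and consistent with the indicated strategy: $(i)\Rightarrow(ii)$ is correct, and $(iii)\Leftrightarrow(iv)$ does follow by feeding $\nu=(1-|b|)\mu$ into Theorem~\ref{thmHMNO} (noting $d\nu|_\T/dm=(1-|b|)h$ by $\mu$-admissibility of $b$). In $(iv)\Rightarrow(i)$, the step you flag as ``the real work'' -- extending $\|f\|_b^2\lesssim\int_\T|f/a|^2dm$ from $\MM(a)$ to all of $\HH(b)$ -- is actually free and needs no appeal to Sarason's $T_{\bar a}g=T_{\bar b}f$ description: if $\int_\T|f/a|^2dm<\infty$ then $f/a\in N^+\cap L^2(\T)=H^2$ by Smirnov's theorem (since $a$ is outer), so $f\in\MM(a)$ and the contractive containment $\MM(a)\subseteq\HH(b)$ you established already gives $\|f\|_b\le\|f/a\|_2$; when the integral is infinite the inequality is vacuous. (For that containment, the cleanest justification that the kernel $\bigl(1-\overline{b(\lambda)}b(z)-\overline{a(\lambda)}a(z)\bigr)/(1-\bar\lambda z)$ is positive is that the row $(b,a)$ is a contractive multiplier of $H^2\oplus H^2$ into $H^2$, by Cauchy--Schwarz and $|a|^2+|b|^2=1$ on $\T$.)
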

The proof of this results is in the same spirit as Theorem~\ref{thmHMNO}. Also note that the condition $(iv)$ implies that $(1-|b|)^{-1}\in L^1$. As a consequence of this observation, we see that if $b \in H_1^\infty$ is non-extreme and such that $(1-|b|)^{-1} \not \in L^1$, then there are {\em no} reverse Carleson measures for $\HH(b)$.

As was done with many of the other spaces discussed in this survey, one can say something about the equivalent measures for $\HH(b)$ \cite{RCDR}. 

\begin{Theorem}\label{ireughjfef}
Let $b\in H^\infty_1$ be  non-extreme and $\mu\in M_+(\D^{-})$. Then the following are equivalent:
\begin{enumerate}
\item[(i)] $\HH(b)_{\mu} = \HH(b)$ and $\|f\|_\mu \asymp \|f\|_b$ for all $f\in\HH(b)$;
\item[(ii)] The following conditions hold: 
\begin{enumerate}
\item $a$ is $\mu$-admissible, 
\item $(a,b)$ is a corona pair, i.e., 
$$\inf\{|a(z)| + |b(z)|: z \in \D\} > 0;$$
\item  $|a|^2$ satisfies the Muckenhoupt $(A_2)$ condition, i.e.,
$$
\sup_{I} \left(\frac{1}{m(I)}\int_{I}|a|^{-2}\,dm\right)\left(\frac{1}{m(I)}\int_{I}|a|^2\,dm\right)<\infty,
$$
where $I$ runs over all  subarcs of $\T$;
\item $d\nu :=|a|^2\,d\mu$ satisfies
$$0 < \inf_{I}\frac{\nu\left(S_{I}\right)}{m(I)} \leqslant \sup_{I}\frac{\nu\left(S_I\right)}{m(I)} < \infty,$$
where the infimum and supremum above are taken over all open arcs $I$ of $\T$.
\end{enumerate}
\end{enumerate}
\end{Theorem}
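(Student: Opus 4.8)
The plan is to transport the problem from $\HH(b)$ to $H^2$ by means of the Pythagorean mate $a$ of \eqref{PMate}, turning the two--sided bound $\|f\|_\mu\asymp\|f\|_b$ on $\HH(b)$ into the two--sided bound $\|g\|_\nu\asymp\|g\|_m$ on $H^2$ for the pushed--forward measure $d\nu=|a|^2\,d\mu$, which by Theorem~\ref{Carleson-revised} together with Theorem~\ref{thmHMNO} is exactly the window condition $\nu(S_I)\asymp m(I)$ of (ii)(d). The bridge is multiplication by $a$. Since $b$ is non--extreme, $M_a\colon H^2\to\HH(b)$, $g\mapsto ag$, is a well--defined contraction whose range $\mathcal M(a)=aH^2$ is dense in $\HH(b)$, and the pointwise identity $|ag|^2=|a|^2|g|^2$ on $\D^-$ yields the key norm transfer
\[
\|ag\|_\mu^2=\int_{\D^-}|a|^2|g|^2\,d\mu=\|g\|_\nu^2,\qquad d\nu=|a|^2\,d\mu.
\]
Note that since $a=a\cdot 1\in aH^2\subseteq\HH(b)$, the requirement $\HH(b)_\mu=\HH(b)$ built into (i) immediately forces $a$ to be $\mu$-admissible, which already delivers (ii)(a).

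For the implication (ii)$\Rightarrow$(i) I would invoke the structural consequence of the corona hypothesis (b) and the $A_2$ hypothesis (c): under these two conditions $M_a$ is in fact bounded below, hence an isomorphism of $H^2$ onto $\HH(b)$, so that $\|ag\|_b\asymp\|g\|_2$ and $\HH(b)=\mathcal M(a)$ with equivalent norms. The $A_2$ condition is precisely what is needed here, since boundedness below of $M_a$ reduces to the invertibility of an associated co-analytic Toeplitz/Cauchy operator, which rests on the boundedness of the Riesz projection on $L^2(|a|^2)$, i.e.\ $|a|^2\in A_2$ by Hunt--Muckenhoupt--Wheeden; the corona condition removes the common zeros of $a$ and $b$ that would otherwise obstruct surjectivity. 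Granting this, and using the admissibility of $a$ in (ii)(a) together with the upper half of (d) (which forces $\nu|_\T\ll m$ and thereby controls the singular part of $\mu|_\T$) to guarantee $\HH(b)_\mu=\HH(b)$, the equivalence $\|f\|_\mu\asymp\|f\|_b$ on $\HH(b)$ becomes, via $f=ag$ and the displayed identity, the equivalence $\|g\|_\nu\asymp\|g\|_2$ on $H^2$; the two $H^2$ theorems then convert this into (ii)(d).

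For the converse (i)$\Rightarrow$(ii) the norm equivalence must be mined for the remaining conditions. I would test on the reproducing kernels $k^b_\lambda$: as in Theorem~\ref{MainThmIntro} the reverse bound $\|k^b_\lambda\|_b\lesssim\|k^b_\lambda\|_\mu$ yields the lower window estimate, and the direct bound $\|k^b_\lambda\|_\mu\lesssim\|k^b_\lambda\|_b$ the upper one; since $|a|^2=(1-|b|)(1+|b|)$ on $\T$ by \eqref{PMate} with $1\le 1+|b|\le 2$, these combine into the two--sided bound on $\nu(S_I)/m(I)$, i.e.\ (d). The corona condition (b) I would obtain by contradiction: if $|a(\lambda_n)|+|b(\lambda_n)|\to 0$, then $M_a$ fails to be bounded below, so $\HH(b)\supsetneq\mathcal M(a)$ strictly in norm and one can produce $f\in\HH(b)$ whose $b$- and $\mu$-norms are incomparable, violating (i). Finally $|a|^2\in A_2$ in (c) should fall out of the same forced isomorphism $\HH(b)\cong H^2$ via $M_a$, together with the weighted--norm characterization of which outer $a$ make $\mathcal M(a)=\HH(b)$ with equivalent norms. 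It is worth contrasting this with the Carleson-only transfer of Theorem~\ref{thm1.11}, which needs neither (b) nor (c): indeed for $b(z)=(1+z)/2$ the mate $a(z)=(1-z)/2$ vanishes at $1\in\T$, so $|a|^2\notin A_2$, and the present theorem correctly predicts the absence of equivalent measures, confirming that the conditions are genuinely restrictive.

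The step I expect to be the real obstacle is the structural lemma pinning down that the conjunction of the corona condition and $|a|^2\in A_2$ is equivalent to $M_a$ being a bounded isomorphism of $H^2$ onto $\HH(b)$ — and in particular extracting $A_2$ in the direction (i)$\Rightarrow$(ii). This is the only non-soft point: it requires translating the abstract Hilbert--space norm equivalence into the $L^2(|a|^2)$-boundedness of a singular integral (Riesz projection / Cauchy transform) and reading off the Muckenhoupt condition. Keeping the boundary behaviour under control throughout — so that every $\mu$-integral over $\T$ is legitimate under the admissibility hypotheses, and so that the singular part of $\mu|_\T$ is correctly accounted for — is the accompanying technical bookkeeping.
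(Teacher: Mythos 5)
The survey does not actually reproduce a proof of Theorem~\ref{ireughjfef} (it is quoted from \cite{RCDR}), but the remark following it confirms that your intended mechanism --- corona pair plus $|a|^2\in (A_2)$ is equivalent to $\HH(b)=\MM(a)$ with equivalent norms, after which everything is transported to $H^2$ via $g\mapsto ag$ and the identity $\|ag\|_\mu=\|g\|_\nu$ --- is the right one. Your direction (ii)$\Rightarrow$(i) is essentially sound modulo standard facts, with one caveat: the upper half of (d) gives $\nu|_\T\ll m$, but this does \emph{not} by itself control $\mu|_\T$, since $\mu$ could carry a singular part on an $m$-null set where the nontangential limits of $a$ vanish $\mu$-a.e.; establishing $\HH(b)_\mu=\HH(b)$ (not just admissibility of $a$) therefore needs a genuine argument, not just bookkeeping.

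The genuine gap is in (i)$\Rightarrow$(ii). You need to show that the two-sided estimate $\|f\|_\mu\asymp\|f\|_b$ \emph{forces} $\HH(b)=\MM(a)$, and your argument for this is circular: you assert that if the corona condition fails then ``one can produce $f\in\HH(b)$ whose $b$- and $\mu$-norms are incomparable,'' and that $(A_2)$ ``should fall out of the same forced isomorphism,'' but no such $f$ is constructed and no isomorphism has been forced --- that implication is precisely the hard content of this direction. Moreover, your alternative route to (d) via the reproducing kernel estimates does not close: testing $\|k^b_\lambda\|_\mu\lesssim\|k^b_\lambda\|_b$ only yields $\mu(S_I)\lesssim m(I)/|a(\lambda_I)|^2$ for $\lambda_I$ the point associated with $I$, and passing from this to $\int_{S_I}|a|^2\,d\mu\lesssim m(I)$ requires comparing $|a|$ on $S_I$ with $|a(\lambda_I)|$, which fails for a general outer $a$. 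The missing chain of ideas is: (1) from the reverse inequality and Theorem~\ref{MainThmIntro}, $\operatorname{ess\,inf}(1-|b|)h>0$, i.e.\ $h\gtrsim|a|^{-2}$; (2) from the direct inequality applied to $ag\in\MM(a)\subseteq\HH(b)$ with $g$ normalized $H^2$ kernels, $h\lesssim|a|^{-2}$; (3) hence $\|f\|_b^2\asymp\int_\T|f|^2|a|^{-2}\,dm$ on $\HH(b)$, which by Sarason's characterization (corona pair plus invertibility of $T_{\bar a/a}$, the latter equivalent to $|a|^2\in(A_2)$ by Devinatz--Widom/Hunt--Muckenhoupt--Wheeden) forces $\HH(b)=\MM(a)$ and delivers (b) and (c); only then does (d) follow by your own transfer argument and the $H^2$ theorems. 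Without step (3) being carried out, the proof of (i)$\Rightarrow$(ii) is incomplete.
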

One should note that if $(a,b)$ is a corona pair and $|a|^2\in (A_2)$, then $\HH(b)=\MM(a)$, where $\MM(a)=aH^2$ equipped with the range norm, i.e., $\|ag\|_{\MM(a)}=\|g\|_m$, for any $g\in H^2$  \cite[IX-5]{Sa}. Hence the above result says that it is possible to obtain an equivalent norm on $\HH(b)$ expressed in terms of an integral only when $\HH(b) = \MM(a)$. 

Surely an example is important here: Let $a(z) :=  c_{\alpha} (1 - z)^{\alpha}$, where $\alpha \in (0, 1/2)$ and $c_{\alpha}$ is suitable chosen so that $a \in H^{\infty}_{1}$. 
When $0 < \alpha < 1/2$, one can show that $|a|^2$ satisfies the $(A_2)$ condition. Choose $b$ to be the outer function in $H^{\infty}_{1}$ satisfying $|a|^2 + |b|^2 = 1$ on $\T$. Standard theory \cite{MR0289784}, using the fact that $a$ is H\"{o}lder continuous on $\D^{-}$, will show that $b$ is continuous on $\D^{-}$. From here it follows that $(a, b)$ is a corona pair. If $\sigma \in M_{+}(\D^{-})$ is any Carleson measure for $H^2$, then one can show that $d \mu := |a|^{-2} dm + d \sigma$ satisfies the conditions of Theorem \ref{ireughjfef}.

For $\HH(b)$ spaces when $b$ non-extreme, the isometric measures: $\|f\|_{\mu} = \|f\|_{b}$ for all $f \in \HH(b)$, are not worth discussing as illustrated by the following result. 

\begin{Theorem}
\label{no-isometry}
When $b$ is non-constant and non-extreme, there are no positive isometric measures for $\HH(b)$.
\end{Theorem}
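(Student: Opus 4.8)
The plan is to argue by contradiction, exploiting the tension between how multiplication by $z$ behaves with respect to an $L^2(\mu)$ norm and how it behaves with respect to the $\HH(b)$ norm when $b$ is non-extreme. So suppose $b\in H^{\infty}_{1}$ is non-constant and non-extreme and that $\mu\in M_+(\D^-)$ is a positive measure with $\|f\|_\mu=\|f\|_b$ for all $f\in\HH(b)$. Since $b$ is non-extreme, the polynomials lie in $\HH(b)$ and are dense, so it suffices to work with polynomials $f$, for which $\|f\|_\mu$ is an honest integral of a function continuous on $\D^-$ and no boundary-admissibility issue arises. Polarizing the isometry gives $\langle f,g\rangle_b=\int_{\D^-}f\overline g\,d\mu$ for all polynomials $f,g$.

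First I would extract a contraction property. For any polynomial $f$ the polynomial $Sf=zf$ is again a polynomial, so applying the isometry to both $f$ and $Sf$ and using $|z|\le 1$ on $\D^-$ together with $\mu\ge 0$ gives
\[
\|Sf\|_b^2=\|zf\|_\mu^2=\int_{\D^-}|z|^2|f|^2\,d\mu\le\int_{\D^-}|f|^2\,d\mu=\|f\|_b^2 .
\]
Thus, \emph{if $\mu$ is isometric, then $S$ acts contractively on polynomials in the $\HH(b)$ norm.}

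Next I would invoke Sarason's description of the non-extreme case to show that, on the contrary, $S$ is always \emph{expansive}. Writing $T_\phi=P_+M_\phi$ for the Toeplitz operator with symbol $\phi$, where $P_+$ is the Riesz projection, recall that for non-extreme $b$ with Pythagorean mate $a$ (so $|a|^2+|b|^2=1$ and $a$ is outer) each $f\in\HH(b)$ has a companion $f^{+}:=T_{\bar a}^{-1}T_{\bar b}f\in H^2$, and Sarason's norm formula reads $\|f\|_b^2=\|f\|_2^2+\|f^{+}\|_2^2$. A short computation with the identity $P_+(zh)=zP_+h+\widehat h(-1)$ (a Fourier coefficient) shows that $(zf)^{+}=zf^{+}+\delta(f)$ for a \emph{constant} $\delta(f)$; since $zf^{+}\perp 1$ in $H^2$ this yields $\|(zf)^{+}\|_2^2=\|f^{+}\|_2^2+|\delta(f)|^2$ and hence
\[
\|Sf\|_b^2=\|f\|_b^2+|\delta(f)|^2\ge\|f\|_b^2 .
\]
Comparing with the contraction bound forces $\delta(f)=0$ for every polynomial $f$.

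The crux is then to show that $\delta\equiv 0$ can occur only when $b$ is constant, which will contradict the hypothesis. Here I would encode $f^{+}$ by the relation $\overline b f-\overline a f^{+}=\overline{g_f}$ for some $g_f\in zH^2$, which is exactly the statement $P_+(\overline b f-\overline a f^{+})=0$. Applying this relation to $zf$ and using $\delta(f)=0$, i.e.\ $(zf)^{+}=zf^{+}$, gives $g_{zf}=\overline z\,g_f$; reading off the constant term shows $\widehat{g_f}(1)=0$, and iterating over $f,zf,z^2f,\dots$ forces $g_f\equiv 0$. Hence $\overline b f=\overline a f^{+}$ on $\T$ for all $f$; taking $f\equiv 1$, for which $1^{+}=\overline{b(0)}/\overline{a(0)}$, gives $\overline b/\overline a$ constant, so $b/a$ is a constant, and with $|a|^2+|b|^2=1$ this makes $|a|$ constant, whence $a$ (being outer) and then $b$ are constant --- the desired contradiction. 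I expect this last reduction, $\delta\equiv0\Rightarrow b$ constant, to be the main obstacle, since it is the only place where the non-extreme and non-constant hypotheses are genuinely used; the two norm estimates for $S$ are comparatively routine once Sarason's formula is in hand.
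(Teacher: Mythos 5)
Your argument is correct. The survey states this theorem without proof (it defers to \cite{RCDR}), so there is no in-text argument to compare against; the closest thing in the paper is the one-line non-existence proof for reverse Carleson measures on $\DD(\mu)$, which tests on monomials and plays the boundedness of $\|z^n\|_{\nu}$ against the growth $\|z^n\|_{\DD(\mu)}^2=1+n\mu(\T)\to\infty$. That crude growth argument cannot work for $\HH(b)$ --- when, say, $\|b\|_\infty<1$ the norms $\|z^n\|_b$ stay bounded --- and your proof supplies exactly the needed refinement: multiplication by $z$ is a contraction in $L^2(\mu)$ for any $\mu\in M_+(\D^{-})$ because $|z|\leqslant 1$ there, while Sarason's formula $\|f\|_b^2=\|f\|_2^2+\|f^+\|_2^2$ together with $(zf)^+=zf^++\delta(f)$ (a constant, by injectivity of $T_{\bar a}$ for outer $a$) makes the shift expansive on $\HH(b)$ with exact defect $|\delta(f)|^2$. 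Restricting to polynomials is legitimate (they lie in $\HH(b)$ precisely because $b$ is non-extreme, and being continuous on $\D^{-}$ they raise no $\mu$-admissibility issue), so the two bounds force $\delta\equiv 0$ on polynomials, and your bookkeeping with $g_f\in zH^2$ correctly converts $\delta(z^k)=0$ for all $k\geqslant 0$ into $g_1=0$, i.e.\ $\overline{b}=\overline{a}\,\overline{b(0)}/\overline{a(0)}$ a.e.\ on $\T$, whence $b=\lambda a$, $|a|$ constant on $\T$, and (since $a$ is outer with $a(0)>0$) both $a$ and $b$ constant --- the desired contradiction. The only inefficiency is that you carry general polynomials $f$ through the final reduction when the monomials $f=z^k$ already suffice: equivalently, $\|z^n\|_b^2=\|1\|_b^2+\sum_{k<n}|\delta(z^k)|^2$ would have to be non-increasing, which kills every term and immediately yields $g_1=0$.
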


Also not worth discussing for general $\HH(b)$ spaces is the notion of dominating sets \cite{RCDR}: $E \subset \T$, $0 < m(E) < 1$, for which
$$\|f\|_{b}^{2} \lesssim \int_{E} |f|^2 dm \quad \forall f \in \HH(b).$$ Indeed, we have the following:

\begin{Theorem}
Let $b \in H^{\infty}_{1}$ such that $\HH(b)$ has a dominating set. Then either $b$ is an inner function or $\|b\|_{\infty} < 1$.
\end{Theorem}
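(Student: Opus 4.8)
The plan is to reduce everything to a single norm comparison and then read off the conclusion from the spectrum of a Toeplitz operator, treating the extreme and non-extreme cases simultaneously. The point is that a dominating set forces the $\HH(b)$-norm to be equivalent to the ambient $H^2$-norm, and this equivalence is an operator-theoretic rigidity that holds exactly when $b$ is inner or $\|b\|_\infty<1$.

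First I would extract the norm equivalence. Suppose $E\subset\T$ with $m(E)<1$ is a dominating set, so $\|f\|_b^2\lesssim\int_E|f|^2\,dm$ for all $f\in\HH(b)$. Since $\HH(b)$ is contractively contained in $H^2$ we have $\int_E|f|^2\,dm\le\int_\T|f|^2\,dm=\|f\|_m^2\le\|f\|_b^2$, and chaining these gives
\[
\|f\|_m\le\|f\|_b\lesssim\Big(\int_E|f|^2\,dm\Big)^{1/2}\le\|f\|_m,\qquad f\in\HH(b),
\]
so $\|f\|_b\asymp\|f\|_m$ on $\HH(b)$. In particular $\HH(b)$ is complete in the $H^2$-norm, hence a closed subspace of $H^2$ on which the two norms are comparable.

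Next I would pass to the defect operator. Writing $T_\phi$ for the Toeplitz operator with symbol $\phi$, one has the standard realization $\HH(b)=\operatorname{ran}\,(I-T_bT_{\overline b})^{1/2}$, with $\|\cdot\|_b$ the range (quotient) norm of $D_b:=(I-T_bT_{\overline b})^{1/2}$ modulo $\ker D_b$ \cite{Sarason}. For $f=D_bg$ with $g\perp\ker D_b$ we have $\|f\|_b=\|g\|_m$ and $\|f\|_m=\|D_bg\|_m$, so the equivalence $\|f\|_b\asymp\|f\|_m$ says precisely that $D_b$ is bounded below on $(\ker D_b)^\perp$; equivalently $D_b^2=I-T_bT_{\overline b}$ has closed range, i.e. $\operatorname{spec}(I-T_bT_{\overline b})$ does not accumulate at $0$. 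Now I would compute that spectrum. Since $b\in H^\infty$ we have $T_{\overline b}T_b=T_{|b|^2}$, and Hartman--Wintner gives $\operatorname{spec}(T_{|b|^2})=[\operatorname{ess\,inf}|b|^2,\ \|b\|_\infty^2]$; using $\operatorname{spec}(T_bT_{\overline b})\setminus\{0\}=\operatorname{spec}(T_{\overline b}T_b)\setminus\{0\}$ the nonzero part of $\operatorname{spec}(T_bT_{\overline b})$ is this same interval. Hence, when $\|b\|_\infty=1$,
\[
\operatorname{spec}(I-T_bT_{\overline b})\ \supseteq\ \big[\,0,\ 1-\operatorname{ess\,inf}|b|^2\,\big],
\]
so the spectrum accumulates at $0$ unless $\operatorname{ess\,inf}|b|^2=1$, i.e. $|b|=1$ a.e., i.e. $b$ is inner; the remaining possibility for non-accumulation is $\|b\|_\infty<1$, where $\operatorname{spec}(I-T_bT_{\overline b})\subset[1-\|b\|_\infty^2,1]$ stays away from $0$. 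Thus closed range forces $b$ inner or $\|b\|_\infty<1$, and combined with the first step this yields the theorem.

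The hard part will be the middle step: pinning down the operator-range realization of $\HH(b)$ and the exact correspondence between the range norm and $\|\cdot\|_b$, in particular handling $\ker D_b$ correctly so that ``norm equivalence'' becomes exactly ``$D_b$ bounded below off its kernel.'' Once that dictionary is set, the spectral computation is routine, relying only on $T_{\overline b}T_b=T_{|b|^2}$, the interchange of the nonzero spectra of $ST$ and $TS$, and the Hartman--Wintner description of $\operatorname{spec}(T_{|b|^2})$. As an alternative, for non-extreme $b$ one can bypass the operator theory and invoke Theorem \ref{MainThmIntro}: with $d\mu=\chi_E\,dm$ condition $(iv)$ reads $\operatorname{ess\,inf}_\T(1-|b|)\chi_E>0$, which is impossible once $m(E)<1$; the spectral argument, however, has the advantage of covering the extreme case as well.
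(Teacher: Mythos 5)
Your argument is correct, and it is self-contained in a way the survey is not: the paper only states this theorem with a pointer to \cite{RCDR} and gives no proof, so there is nothing in the text to match step for step. Your reduction is sound: a dominating set (indeed, any reverse Carleson measure of the form $\chi_E\,dm$) forces $\|f\|_b\asymp\|f\|_m$ on $\HH(b)$ because the contractive containment supplies the opposite inequality for free; note that you never actually need $m(E)<1$, so you are proving the slightly stronger statement that norm equivalence of $\|\cdot\|_b$ and $\|\cdot\|_m$ already forces $b$ inner or $\|b\|_\infty<1$. The dictionary in the middle step is exactly Sarason's operator-range realization $\HH(b)=\mathscr{M}\bigl((I-T_bT_{\bar b})^{1/2}\bigr)$, which the survey alludes to (``an equivalent definition of $\HH(b)$ via defects of certain Toeplitz operators''), and the equivalence ``norm comparability $\Leftrightarrow$ $D_b$ bounded below off its kernel $\Leftrightarrow$ $0$ is not an accumulation point of $\operatorname{spec}(I-T_bT_{\bar b})$'' is handled correctly for the positive operator $D_b$. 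The spectral computation via $T_{\bar b}T_b=T_{|b|^2}$, Hartman--Wintner, and the coincidence of the nonzero spectra of $T_bT_{\bar b}$ and $T_{\bar b}T_b$ is routine and closes the argument in both the extreme and non-extreme cases at once (one should only say that $\operatorname{spec}(I-T_bT_{\bar b})$ contains $[0,1-\operatorname{ess\,inf}|b|^2]$ possibly minus the point $1$, which is irrelevant for accumulation at $0$). What this approach buys is uniformity: it avoids any case analysis on extremality and any appeal to the structure theory of $\HH(b)$ beyond the defect-operator model, whereas the route through Theorem \ref{MainThmIntro} that you mention as an alternative is confined to non-extreme $b$ (and in fact shows the stronger fact that non-extreme $b$ admits no dominating set at all, consistent with the theorem being only a necessary condition).
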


As one can see, the case for extreme $b$ seems to be very much open. When $b$ is inner, much has been said about the Carleson and reverse Carleson measures for $\HH(b) = \K_b$. When $b$ is extreme but not inner, there are a few things one can say \cite{RCDR} but there is much work to be done to complete the picture. 

\section{Harmonically weighted Dirichlet spaces}

For $\mu \in M_{+}(\T)$ let 
$$\varphi_{\mu}(z) = \int_{\T} \frac{1 - |z|^2}{|\xi - z|^2} d\mu(\xi), \quad z \in \D,$$
denote the Poisson integral of $\mu$. The {\em harmonically weighted Dirichlet space} $\mathscr{D}(\mu)$  \cite{KEFR, Ri} is the set of all analytic functions $f$ on $\D$  for which 
$$\int_{\D} |f'|^2 \varphi_{\mu} dA < \infty,$$
where $d A  = dx dy/\pi$ is normalized planar measure on $\D$. Notice that when $\mu = m$, we have $\varphi_{\mu} \equiv 1$ and $\mathscr{D}(\mu)$ becomes the classical Dirichlet space \cite{KEFR}. One can show that $\mathscr{D}(\mu) \subset H^2$ \cite[Lemma 3.1]{Ri} and the norm $\|\cdot\|_{\DD(\mu)}$ given by  
$$\|f\|_{\mathscr{D}(\mu)}^2 := \int_{\T} |f|^2 dm + \int_{\D} |f'|^2 \varphi_{\mu} dA$$ makes $\mathscr{D}(\mu)$ into a reproducing kernel Hilbert space of analytic functions on $\D$. It is known that both the polynomials as well as the linear span of the Cauchy kernels form dense subsets of $\mathscr{D}(\mu)$ \cite[Corollary 3.8]{Ri}. 

When $\zeta \in \T$ and $d\mu=\delta_\zeta$, a result from  \cite{SaLoc} shows that 
\[
 \mathscr{D}(\delta_{\zeta})=\HH(b),
\]
where $w_0=(3-\sqrt{5})/2$ and 
\begin{equation}\label{DefFctb}
 b(z)=\frac{(1-w_0)\overline{\zeta}z}{1-w_0\overline{\zeta}z}.
\end{equation}
Furthermore, the norms on these spaces are the same. In fact, these are the only harmonically weighted Dirichlet spaces which are equal to an $\HH(b)$ space with equal norm \cite{CGR}. In \cite{CostRans} it was shown that if 
\begin{equation}\label{wefroiuer}
\mu = \sum_{j = 1}^{n} c_j \delta_{\zeta_j}, \quad c_j > 0, \zeta_j \in \T
\end{equation} is a finite linear combination of point masses on $\T$ and $a$ is the unique polynomial with $a(0) > 0$ and with  simple zeros at $\zeta_j$ (and no other zeros) and $b$ is the Pythagorean mate for $a$ (which must also be a polynomial), then $\HH(b) = \DD(\mu)$ with equivalent norms. In this case we can use Theorem \ref{thm1.11} to obtain a characterization of the Carleson measures for $\DD(\mu)$:

 \begin{Theorem}
For $\mu$ as in \eqref{wefroiuer} and $\nu \in M_{+}(\D^{-})$, the following assertions are equivalent: 
\begin{enumerate}
\item[(i)]  $\nu$ is a Carleson measure for $\DD(\mu)$;
\item[(ii)]  $\prod_{i=1}^n\left|z-\zeta_i \right|^2 d\nu$ is a Carleson measure for $H^2$. 
\end{enumerate}
\end{Theorem}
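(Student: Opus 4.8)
The plan is to reduce the statement entirely to Theorem \ref{thm1.11} by means of the identification $\HH(b) = \DD(\mu)$ imported from \cite{CostRans}. First I would pin down the data of that identification: for $\mu$ as in \eqref{wefroiuer}, the associated polynomial $a$ is the one with $a(0)>0$ whose only zeros are the simple zeros $\zeta_1,\dots,\zeta_n$, so that up to a nonzero multiplicative constant $a(z) = c\prod_{j=1}^{n}(z-\zeta_j)$, while $b$ is chosen so that $(a,b)$ is a Pythagorean pair, i.e. $|a|^2 + |b|^2 = 1$ $m$-a.e. on $\T$. With these conventions \cite{CostRans} asserts $\HH(b) = \DD(\mu)$ with equivalent norms, and $b$ is rational.

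The first genuine step is to verify that Theorem \ref{thm1.11} applies to this $b$, namely that $b$ is rational and non-extreme with Pythagorean mate exactly $a$. Rationality is part of what \cite{CostRans} provides. For non-extremality, I would note that $a$ is a polynomial whose zeros all lie on $\T$, hence $a$ is outer and bounded on $\D$, and $1-|b| \asymp |a|^2$ near each $\zeta_j$; since $\log|a| \in L^1(m)$ (a boundary zero of a polynomial contributes only an integrable logarithmic singularity), it follows that $\log(1-|b|) \in L^1(m)$, so $b$ is non-extreme. Because $a$ is outer, $a(0)>0$, and $|a|^2+|b|^2=1$, the function $a$ is precisely the Pythagorean mate of $b$ in the sense used in Theorem \ref{thm1.11}. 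Applying that theorem with the variable measure $\nu$ then gives: $\nu$ is a Carleson measure for $\HH(b)$ if and only if $|a|^2\,d\nu$ is a Carleson measure for $H^2$.

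Next I would exploit the fact that the Carleson property is insensitive both to passing to an equivalent norm and to multiplying the measure by a positive constant. Since $\HH(b)$ and $\DD(\mu)$ coincide as sets of analytic functions and carry equivalent norms, $\nu$ is a Carleson measure for $\DD(\mu)$ exactly when it is one for $\HH(b)$; the admissibility clause $\HH(b)_{\nu}=\HH(b)$ transfers verbatim precisely because the two underlying function spaces are literally equal. Finally, from $a(z)=c\prod_{j=1}^{n}(z-\zeta_j)$ I get $|a(z)|^2 = |c|^2\prod_{j=1}^{n}|z-\zeta_j|^2$ on $\D^{-}$, so $|a|^2\,d\nu$ and $\prod_{j=1}^{n}|z-\zeta_j|^2\,d\nu$ differ only by the positive scalar $|c|^2$ and are therefore simultaneously Carleson measures for $H^2$. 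Chaining these three equivalences yields (i) $\Leftrightarrow$ (ii).

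The step I expect to require the most care is the clean transfer of the Carleson property across $\HH(b)=\DD(\mu)$: one must ensure that the $\mu$-admissibility condition built into the definition of a Carleson measure for $\HH(b)$ matches whatever admissibility is implicit in the Carleson condition for $\DD(\mu)$. Here this is immediate only because the two spaces are genuinely equal as sets rather than merely isomorphic, and I would state this equality explicitly to justify the transfer. Everything else — the factorization of $a$, the non-extremality of $b$, and invariance of the Carleson property under positive scaling — is routine.
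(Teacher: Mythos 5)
Your proposal is correct and follows exactly the route the paper intends: it combines the Costara--Ransford identification $\HH(b)=\DD(\mu)$ (with equivalent norms) with Theorem \ref{thm1.11}, using that $a$ is, up to a positive constant, $\prod_{j=1}^{n}(z-\zeta_j)$. The paper only sketches this reduction, so your added verifications (that $a$ is outer and is the Pythagorean mate of $b$, that $b$ is rational and non-extreme via $1-|b|\asymp|a|^2$ and $\log|a|\in L^1(m)$, and that the Carleson property is stable under equivalent norms and positive scalar multiples of the measure) are exactly the details needed and are all sound.
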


This  result appeared in \cite{MR3097551} (see also \cite{chacon}). In fact, Theorem 6.1 from \cite{MR3097551} shows that the above conditions are equivalent to 
$$\|k^{\DD(\mu)}_{\lambda}\|_{\nu} \lesssim \|k^{\DD(\mu)}_{\lambda}\|_{\DD(\mu)} \quad \forall \lambda \in \D.$$
In other words, at least when $\mu$ is a linear combination of point masses, the reproducing kernel thesis characterizes the Carleson measures for $\DD(\mu)$. 

%
%
The discussion of reverse Carleson measures for $\DD(\mu)$ is dramatically simpler since they do not exist! Indeed, suppose that $\nu\in M_+(\D^{-})$ and $\|f\|_\mu\lesssim \|f\|_\nu$ for all $f\in\DD(\mu)$. In particular, this is true for the monomials $z^n$, $n\geqslant 0$. But $\|z^n\|_\nu\lesssim 1$ and $\|z^n\|_\mu^2=1+n\mu(\T)$, which gives a contradiction when $n$ tends to $\infty$.

We point out some related results from \cite{chacon} which discuss a type of reverse Carleson measure for $\DD(\mu)$ spaces except that the definitions of ``reverse Carleson measures'' and ``sets of domination'' (dominating sets) are quite different, and not equivalent, to ours. 

\section{Appendix}

We reproduce here an adaption to $1<p<+\infty$ 
of Baranov's proof  as presented in \cite[Section 7]{BFGHR} and which is based on the Bernstein-type 
inequalities in model spaces he obtained in \cite{Baranov-JFA05,Baranov-09}.
It uses a Whitney type decomposition of $\T\setminus \sigma(\Theta)$. 
Let $\varepsilon>0$, let $\delta\in (0,1/2)$ and let 
\[
d_\varepsilon(\zeta)=d(\zeta,\Omega(\Theta,\varepsilon)),
\]
where we recall that $\Omega(\Theta,\varepsilon)=\{z\in\D:|\Theta(z)|<\varepsilon\}$. Since 
\[
\int_{\T\setminus\sigma(\Theta)}d_\varepsilon^{-1}(\zeta)\,dm(\zeta)=\infty,
\]
we can choose a sequence of arcs $I_k$ with pairwise disjoint interiors such that $\bigcup_k I_k=\T\setminus\sigma(\Theta)$ and 
\[
\int_{I_k}d_\varepsilon^{-1}(\zeta)\,dm(\zeta)=\delta.
\]
In this case\footnote{Note that such a system of arcs was also considered in \cite{Baranov-09} for $\delta=1/2$.}
\begin{equation}\label{eq:distance-level-set}
\frac{1-\delta}{\delta}m(I_k)\leq d(I_k,\Omega(\Theta,\varepsilon))\leq\frac{1}{\delta}m(I_k).
\end{equation}
Indeed, by the definition of $I_k$, there exists $\zeta_k\in I_k$ such that $d_\varepsilon(\zeta_k)=\frac{1}{\delta}m(I_k)$, whence for any $\zeta\in I_k$, we have 
\[
d_\varepsilon(\zeta)\geq d_\varepsilon(\zeta_k)-m(I_k)\geq \frac{1-\delta}{\delta}m(I_k).
\]
It follows from \eqref{eq:distance-level-set} that 
\[
m(I_k)^{1/(p-1)}\int_{I_k}d_\varepsilon^{-q}(u)\,dm(u)\leq \left(\frac{\delta}{1-\delta}\right)^q.
\]
Now recall the definition of the weight involved in
the Bernstein-type inequality 
\[
w_r(z)=\|(k_z^\Theta)^2\|_s^{-\frac{r}{r+1}},
\]
where $1\leq r<\infty$ and $s$ is the conjugate exponent of $r$. 
(We point out a misprint in the definition of $w_r$ in \cite{BFGHR} where the square was omitted
inside the norm.) Later on we will choose 
$r$ such that $1\leq r<p$. Then it is shown in \cite[Lemmas 4.5 \& 4.9]{Baranov-JFA05} that 
\[
w_r(\zeta)\geq C_0 d_\varepsilon(\zeta),
\]
where $C_0$ depends only on $r$ and $\varepsilon$ (but not on $\Theta$). Thus
\begin{equation}\label{eq:weight-integral}
m(I_k)^{1/(p-1)}\int_{I_k} w_r^{-q}(\zeta)\,dm(\zeta)\leq  C\delta^q. 
\end{equation}

Let $I_k^{(j)}$, $j=1, \dots 4$ be the quarters of $I_k$
and let $S_k^{(j)}$ be the parts of $S_k$ lying over  $I_k^{(j)}$. Thus, $S_k = \bigcup_{j=1}^4 S_k^{(j)}$
(note that $S_k^{(j)}$ are not standard Carleson windows). By \eqref{eq:distance-level-set}, we have 
\[
S(NI_k^{(j)})\cap \Omega(\Theta,\varepsilon)\neq \emptyset
\]
as soon as $N>\frac{8}{\delta}$. 
This will be the choice of $N$ in the Theorem.
Suppose now that
\[
 A:=\inf_I\frac{\mu(S(I))}{m(I)}>0,
\]
where the infimum is taken over all arcs $I\subset\T$ with
$S(NI)\cap \Omega(\Theta,\varepsilon)\neq\emptyset$.
 Then we have 
\[
\mu(S_k^{(j)})\geq \mu(S(I_k^{(j)}))\geq A m(I_k^{(j)}).
\]

Now let $f\in \K_\Theta$ be continuous in $\D\cup\T$. By the mean value property, there exists $s_k^{(j)}\in S_k^{(j)}$ such that 
\begin{equation}\label{eq:int-S-k-j}
\int_{S_k^{(j)}} |f|^pd\mu = |f(s_k^{(j)})|^p \mu(S_k^{(j)})  \geq A m(I_k^{(j)}) \cdot |f(s_k^{(j)})|^p.
\end{equation}
Denote by 
\[
\mathfrak{J}_k^{i,j}=\int_{I_k^{(i)}}|f(u)-f(s_k^{(j)})|^p\,dm(u).
\]
Then we have
\begin{eqnarray*}
\lefteqn{\sum_k \int_{I_k}|f|^p\,dm =\sum_k \left(\int_{I_k^{(1)}}|f(u)|^p+\int_{I_k^{(2)}}|f(u)|^p+\int_{I_k^{(3)}}|f(u)|^p+\int_{I_k^{(4)}}|f(u)|^p \right)\,dm(u)} \\
 &&\leq c_p\sum_k (\mathfrak{J}_k^{1,3}+\mathfrak{J}_k^{2,4}+\mathfrak{J}_k^{3,1}+\mathfrak{J}_k^{4,2})\\
&&+c_p\sum_k\left(|f(s_k^{(3)})|^p m(I_k^{(1)})+|f(s_k^{(4)})|^p m(I_k^{(2)})+|f(s_k^{(1)})|^p m(I_k^{(3)})+|f(s_k^{(2)})|^p m(I_k^{(4)}) \right).
\end{eqnarray*}
Since $m(I_k^{(1)})=m(I_k^{(2)})=m(I_k^{(3)})=m(I_k^{(4)})$, we get with \eqref{eq:int-S-k-j}
\[
\sum_k \int_{I_k}|f|^p\,dm\leq c_p\sum_k (\mathfrak{J}_k^{1,3}+\mathfrak{J}_k^{2,4}+\mathfrak{J}_k^{3,1}+\mathfrak{J}_k^{4,2})+c_pA^{-1}\|f\|_{L^p(\mu)}^p.
\]

Let us now estimate $\sum_k \mathfrak{J}_k^{1,3}$. We have 
\[
\mathfrak{J}_k^{1,3}=\int_{I_k^{(1)}}|f(u)-f(s_k^{(3)})|^p\,dm(u)=\int_{I_k^{(1)}}\left|\int_{[s_k^{(3)},u]}f'(v)\,|dv|\right|^p\,dm(u),
\]
where $[s_k^{(3)},u]$ denotes the interval with endpoints $s_k^{(3)}$ and $u$ and $|dv|$ stands for the Lebesgue measure on this interval. Using H\"older's inequality, we obtain 
\[
\mathfrak{J}_k^{1,3}\leq \int_{I_k^{(1)}}\left(\int_{[s_k^{(3)},u]}|f'(v)|^pw^p_r(v)\,|dv|\right) \left( \int_{[s_k^{(3)},u]}w_r^{-q}(v)\,|dv|  \right)^{p/q}\,dm(u).
\] 
Now recall that the norms of reproducing kernels in model spaces have a certain monotonicity along the radii. More precisely, let $q>1$. Then it is shown in \cite[Corollary 4.7.]{Baranov-JFA05} that there exists $C=C(q)$ such that for any $z=\rho e^{it}$ and $\tilde z=\tilde\rho e^{it}$ with $0\leq \tilde\rho\leq \rho$, we have
\begin{equation}\label{norm-rep-kern-monotone}
\|k_{\tilde z}^\Theta\|_\alpha\leq C(q) \|k_z^\Theta\|_\alpha
\end{equation}
(which we use here for $\alpha=2q$).
Using \eqref{norm-rep-kern-monotone}, \eqref{eq:weight-integral} and the fact that the angle\footnote{That explains why we choose a decomposition with $\mathfrak{J}_k^{i,j}$, $i\neq j$, since in this case the interval $[s_k^{(j)},u]$, $u\in I_k^{(i)}$, will never be orthogonal to the boundary.} between $[s_k^{(3)},u]$ and $\T$ is separated from $\frac{\pi}{2}$, we conclude that 
\[
  \left( \int_{[s_k^{(3)},u]}w_r^{-q}(v)\,|dv|  \right)^{p/q}
 \le C\delta^p(m(I_k)^{-1/(p-1)})^{p/q}=C\frac{\delta^p}{m(I_k)},
\]
Hence
\[
\sum_k\mathfrak{J}_k^{1,3}\leq C 
\delta^p \sum_k \frac{1}{m(I_k)}\int_{I_k^{(1)}} \int_{[s_k^{(3)},u]}|f'(v)|^p w_r^p(v)\,|dv|\,dm(u).
\]
Again just by the mean value property, there exists $u_k\in I_k^{(1)}$ such that 
\[
\sum_k \frac{1}{m(I_k)}\int_{I_k^{(1)}}\int_{[s_k^{(3)},u]}|f'(v)|^p w_r^p(v)\,|dv|\,dm(u)=\frac{1}{4}\sum_k\int_{[s_k^{(3)},u_k]}|f'(v)|^p w_r^p(v)\,|dv|.
\]
Now note that the measure $\sum_k m_{[s_k^{(3)},u_k]}$ (sum of Lebesgue measures on the intervals) 
is a Carleson measure with a uniform bound on the Carleson constant independent of the location of $u_k\in I_k^{(1)}$ and $s_k^{(3)}\in S_k^{(3)}$ (and of $\delta$). Then by the Bernstein inequality
\cite[Theorem 1.1]{Baranov-JFA05}, we have 
\[
\sum_k\int_{[s_k^{(3)},u_k]}|f'(v)|^p w_r^p(v)\,|dv|\leq C\|f\|_p^p,
\] 
which gives 
\[
\sum_k \mathfrak{J}_k^{1,3}\leq C \delta^p \|f\|_p^p.
\]
Using similar estimates for the other terms $\sum_k \mathfrak{J}_k^{2,4}$, $\sum_k \mathfrak{J}_k^{3,1}$ and $\sum_k \mathfrak{J}_k^{4,2}$, we obtain 
\[
\sum_k \int_{I_k}|f|^p\,dm\leq C \delta^p \|f\|_p^p+c_pA^{-1}\|f\|_\mu^p.
\]
Finally we consider the integral over $\sigma(\Theta)=\T\setminus\bigcup_k I_k$. For this,
as indicated in \cite{BFGHR}, we use an argument from \cite{Queffelec} which we would like
to make more explicit here, thereby showing that the direct Carleson measure condition is indeed not required in the argument. Recall that $f\in \K_{\Theta}^p\cap C(\T)$. Also, it is clear that we can assume
$\|f\|_{L^p(\mu)}\neq 0$. 
By uniform
continuity there exists $\rho>0$ such that for every $z,z'\in \overline{\D}$ with $|z-z'|<\sqrt{2}\rho$
we have
\[
 |f(z)-f(z')|\le\frac{\|f\|_{L^p(\mu) }}{ A^{1/p}}.
\]
Now there exists a sequence of arcs $(J_k)$ (not necessarily open) with pairwise disjoint interiors
such that $m(J_k)<\rho$, and with $\sigma(\Theta)\subset \bigcup_k J_k$ and
$S(J_k)\cap \Omega(\Theta,\varepsilon)\neq \emptyset$. Let $z_k\in S(J_k)$ such that $|f(z_k)|$ is
the smallest value of $|f|$ in $S(J_k)$ and let $\zeta_k\in J_k$  be such that $|f(\zeta_k)|$ is 
the biggest value of $|f|$ on $J_k$. Observe also that the diameter of $S(J_k)$ is less than
$\sqrt{2}\rho$. 
Then
\begin{eqnarray*}
 \|f\|_{L^p(\mu)}&\ge& \left[\sum_{k}\int_{S(J_k)}|f|^pd\mu\right]^{1/p}
 \ge \left[\sum_{k}|f(z_k)|^p\mu(S(J_k))\right]^{1/p}\\
 &\ge&  \left[A\sum_{k}|f(z_k)|^pm(J_k)\right]^{1/p}\\
 &\ge&A^{1/p}
   \left[\sum_{k}|f(\zeta_k)|^pm(J_k)\right]^{1/p}-
  \left[A\sum_{k}|f(z_k)-f(\zeta_k)|^pm(J_k)\right]^{1/p}\\
 &\ge&A^{1/p}
   \left[\int_{\bigcup_{k} J_k}|f|^pdm\right]^{1/p}-\left[A\frac{\|f\|^p_{L^p(\mu)}}
   { A}m(\bigcup_{k}J_k)\right]^{1/p}\\
 &\ge& A^{1/p}\left[\int_{\sigma(\Theta)}|f|^p dm\right]^{1/p}-\|f\|_{L^p(\mu)}.
\end{eqnarray*}
As a result, setting $\tilde{A}= 2^p/A$,
\begin{equation}\label{eq:integral-over-spectrum}
\int_{\sigma(\Theta)}|f|^p\,dm\leq \tilde A \|f\|_{L^p(\mu)},
\end{equation}
Thus we finally obtain
\[
\|f\|_p^p \leq \frac{2^p+c_p}{A}\|f\|_{L^p(\mu)}^p+C\delta^2 \|f\|_p^p, 
\]
that is 
\[
(1-C\delta^p)\|f\|_p^p\leq  \frac{2^p+c_p}{A}\|f\|_{L^p(\mu)}^p. 
\]
It remains to choose $\delta$ small enough.

\bibliographystyle{plain}

\bibliography{referencesRC}

\end{document}